\documentclass[11pt,reqno]{amsart}

\usepackage[T1]{fontenc}
\usepackage{lmodern}
\usepackage{microtype}
\usepackage{amsmath,amssymb,mathtools}
\usepackage{amsthm}
\usepackage{enumitem}
\usepackage[colorlinks=true,linkcolor=blue,citecolor=blue,urlcolor=blue]{hyperref}
\usepackage[nameinlink,capitalise,noabbrev]{cleveref}
\usepackage{comment}
\numberwithin{equation}{section}

\newtheorem{theorem}{Theorem}[section]
\newtheorem{proposition}[theorem]{Proposition}
\newtheorem{lemma}[theorem]{Lemma}
\newtheorem{corollary}[theorem]{Corollary}

\theoremstyle{definition}
\newtheorem{definition}[theorem]{Definition}

\newcommand{\R}{\mathbb R}
\newcommand{\Z}{\mathbb Z}

\newcommand{\PP}{\mathbb P}
\newcommand{\cH}{\mathcal H}
\newcommand{\cU}{\mathcal U}
\newcommand{\cF}{\mathcal F}
\newcommand{\cG}{\mathcal G}
\newcommand{\cX}{\mathcal X}
\newcommand{\SL}{\operatorname{SL}}
\newcommand{\GL}{\operatorname{GL}}
\newcommand{\Gr}{\operatorname{Gr}}
\newcommand{\Span}{\operatorname{span}}
\newcommand{\Int}{\operatorname{Int}}
\newcommand{\Opp}{\operatorname{Opp}}

\newcommand{\transp}{\mathsf T}
\newcommand{\eps}{\varepsilon}

\title{Kazhdan constants for two-element generating sets
of $\mathrm{SL}_n(\mathbb Z)$}

\author{Jvbin Yao}
\address{Research Center for Operator Algebras,
	School of Mathematical Sciences,
	East China Normal University,
	Shanghai 200241, PR China}
\email{52285500009@stu.ecnu.edu.cn}

\begin{document}

\begin{abstract}
For $n\geq3$, the group $\SL_n(\Z)$ has property $(T)$, 
so every finite generating set has a positive Kazhdan constant. 
However, it was recently shown that the infimum of these 
constants over all finite generating sets is $0$. 
This naturally raises the question of whether the infimum remains zero
when the cardinality of the generating sets is bounded in advance.
We prove that the infimum of the Kazhdan constants of
$\SL_n(\Z)$ over all two-element generating sets is $0$
for every $n\geq3$.
\end{abstract}

\maketitle

\section{Introduction}

Let $\Gamma$ be a finitely generated group and let $S\subseteq\Gamma$ be finite.
Throughout the paper, generating sets are not assumed to be symmetric.  
The Kazhdan constant relative to $S$ is
\begin{equation}\label{eq:kazhdan-constant}
 \kappa(\Gamma,S)
 =\inf_{\substack{\pi:\Gamma\to\cU(\cH)\\ \cH^\Gamma=\{0\}}}
   \inf_{\substack{\xi\in\cH\\ \|\xi\|=1}}
   \max_{s\in S}\|\pi(s)\xi-\xi\|.
\end{equation}
Property $(T)$ says that $\kappa(\Gamma,S)>0$ for every finite generating set $S$; see \cite[Remarks~1.1.4 and~1.2.2]{BHV2008}. However, $\kappa(\Gamma,S)$ depends on the choice of the generating set $S$. 
It is natural to ask whether the positivity can be made uniform over all generating sets. 
This leads to the uniform Kazhdan constant:
\[
\kappa_{\mathrm u}(\Gamma)
=
\inf_{\substack{\langle S\rangle=\Gamma\\ |S|<\infty}}
\kappa(\Gamma,S).
\]
The question whether $\kappa_{\mathrm u}(\Gamma)$ is positive has been
studied extensively; see, for example, \cite{GelanderZuk,Osin,OsinSonkin,Arzhantseva,LubotzkyYao}.
A more refined question is to ask what happens when the number of generators is bounded. For any integer $d$ greater than or equal to the minimal number of generators of $\Gamma$, define
\[
\kappa_{\le d}(\Gamma)
=
\inf_{\substack{\langle S\rangle=\Gamma\\ |S|\le d}}
\kappa(\Gamma,S).
\]
Then
$
\kappa_{\mathrm u}(\Gamma)
=
\inf_d \kappa_{\le d}(\Gamma)
=
\lim_{d\to\infty}\kappa_{\le d}(\Gamma)$.
Therefore, even if $\kappa_{\mathrm u}(\Gamma)=0$, it does not follow that there exists a fixed $d$ such that
$\kappa_{\le d}(\Gamma)=0$.
It is still possible that $\kappa_{\le d}(\Gamma)>0$
for every fixed $d$, while $\kappa_{\le d}(\Gamma)\to 0$
as $d\to\infty$.
Another important reason to consider this question is its connection with
expansion; see \cite{Arzhantseva,LubotzkyExpanders}. For symmetric generating sets, 
bounding their size ensures that the associated Cayley graphs have uniformly bounded degree.
Moreover, for generating sets of uniformly bounded size, a uniform
positive lower bound on the Kazhdan constants yields a uniform spectral
gap for the corresponding Cayley graphs of finite quotients.
The bounded-size problem is therefore naturally related to the study
of bounded-degree expanders.

Kazhdan constants for generating sets of bounded size have been
studied in several related settings.
Gelander and \.{Z}uk showed that for certain groups with property $(T)$ 
the Kazhdan constants tend to zero along generating sets of
uniformly bounded cardinality \cite{GelanderZuk}.
For finite quotients, Kassabov and Riley asked about the maximal
and minimal Kazhdan constants of $\SL_n(\mathbb Z/k\mathbb Z)$
over generating sets of bounded size \cite{KassabovRiley}.
In a different direction, Hadad proved that one can choose
generating sets $S_n$ of $\SL_n(\mathbb Z)$ whose cardinalities
are uniformly bounded and whose Kazhdan constants are bounded
below by a positive constant independent of $n\geq3$
\cite{Hadad}.
These results do not determine
$\kappa_{\le d}(\SL_n(\mathbb Z))$
for fixed $n\geq3$ and $d\geq2$.

Our previous work \cite{LubotzkyYao} established the vanishing of
$\kappa_{\mathrm u}(\Gamma)$ for every infinite finitely generated
linear group $\Gamma$.
The present paper strengthens this conclusion for
$\SL_n(\mathbb Z)$ by showing that the Kazhdan constants can tend
to zero even when the generating sets consist of only two elements.
More precisely, our main result is the following.

\begin{theorem}\label{thm:main}
For every integer $n\geq3$,
$
  \kappa_{\leq2}\bigl(\SL_n(\Z)\bigr)=0.
$
\end{theorem}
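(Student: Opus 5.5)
To prove \cref{thm:main}, I will exhibit, for each $\eps>0$, a two-element generating set $S=\{a,b\}$ of $\SL_n(\Z)$, a unitary representation $\pi$ without invariant vectors, and a unit vector $\xi$ with $\|\pi(a)\xi-\xi\|<\eps$ and $\|\pi(b)\xi-\xi\|<\eps$. I will take $\pi$ to be the quasi-regular representation on $\ell^2(\Gamma\cdot x)$ for an infinite orbit of $\Gamma=\SL_n(\Z)$ acting on a homogeneous space. The most convenient choice is the projective space $\PP(\R^n)$, or more precisely the orbit of a line $[v]\in\PP^{n-1}(\mathbb Q)$. Such an orbit is infinite, so $\ell^2$ of it has no nonzero invariant vectors. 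Almost invariant vectors will be indicator functions (normalized) of large finite sets $F$ of lines that are nearly preserved by both $a$ and $b$. Concretely, $\|\pi(g)\mathbf 1_F-\mathbf 1_F\|^2=2|gF\setminus F|$, so it suffices to have $|aF\triangle F|,|bF\triangle F|\le\eps^2|F|$.

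The plan is to follow the dynamical idea from \cite{LubotzkyYao}. I take $a$ to be a unipotent element $u$ (an elementary matrix, or a regular unipotent) and $b$ to be a matrix of the form $b=g_N=\gamma u^N\gamma'$, or more simply a conjugate $h_N u h_N^{-1}$ combined with a fixed complementary piece, depending on a parameter $N\to\infty$. The point is that a unipotent $u$ acting on lines has a huge orbit structure near its fixed subspace: the sets $F_M=\{[e_1+ke_2] : |k|\le M\}$, suitably enlarged to $\{[\,\cdot\,]\}$ in a box, are Følner-like for $u$ with boundary ratio $O(1/M)$. Thus $a$ nearly fixes $\mathbf 1_{F_M}$. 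The second generator must (i) together with $a$ generate $\SL_n(\Z)$, and (ii) nearly preserve $F_M$. For (ii) I will choose $b$ to be a product $b=w\,u'^{N}$, where $u'$ is another elementary unipotent and $w$ is a fixed finite-order element. I will then arrange that $b$ acts on most of $F_M$ (lines near a fixed point $p$) as an approximate map moving points only slightly in a contracting way. A cleaner alternative for (ii) is to make $b$ fix most points of $F_M$ exactly, i.e.\ $b$ acts trivially on a large subspace of lines containing almost all of $F_M$.

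For (i), I will use a generation criterion for $\SL_n(\Z)$. A two-element generating set can be obtained by the classical pairs (e.g.\ $E_{12}$ together with a cyclic-permutation-type matrix times elementary matrices). I will then check that inserting a large power $N$, i.e.\ replacing one factor by $E_{ij}^{N}$ multiplied by something that recovers $E_{ij}$, still generates. The cleanest way is to pick $b_N$ such that $\langle u,b_N\rangle$ contains all elementary matrices via commutator identities $[E_{ij}(x),E_{jk}(y)]=E_{ik}(xy)$. The key is to exploit $\gcd$ tricks: if $\langle u,b_N\rangle$ contains $E_{ij}(N)$ and $E_{ij}(N+1)$ it contains $E_{ij}(1)$. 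So I will choose $b_N$ with $b_N^{\pm1}$ conjugating $u$ into elements producing $E_{ij}(N)$ and $E_{ij}(N+1)$ via commutators.

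The main obstacle is reconciling (i) and (ii). Generation forces $b$ to mix coordinates in a way that tends to destroy near-invariance of any fixed finite set. My first attempt will be to separate scales. The set $F_M$ lives at scale $M$, and $b_N$ acts as a permutation matrix composed with an elementary matrix whose entry is $N\gg M$. I will replace $F_M$ by the two-sided "ladder" $F=\bigcup_{j} w^jF_M$ over the finite orbit of the permutation $w$, so that $w$ preserves $F$ exactly. I will then show that $u'^{N}$ fixes all but an $O(1/M)$ fraction, because $u'$ fixes the hyperplane of lines with vanishing relevant coordinate, and I choose $F_M$ mostly inside it. If that fails, the fallback is a Margulis-type argument: use a representation on $\ell^2$ of a $\Z^{n}$-orbit together with the relative property (T) failure for suitable subgroups, choosing $N$ via compactness so that the Kazhdan pair degenerates.
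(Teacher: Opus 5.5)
There is a genuine gap. Your framework (an infinite orbit, normalized indicators, $\|\pi(g)\mathbf 1_F-\mathbf 1_F\|^2=2|gF\setminus F|$) is sound. But the whole content of the theorem is to produce, for each $\eps>0$, a generating pair and a finite set $F$ almost invariant under \emph{both} generators. Your proposal never does this, and the concrete mechanism you suggest cannot work.

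Your ladder makes $F$ exactly $w$-invariant for a \emph{fixed} $w$, and $u$ is also fixed. So $\xi=|F|^{-1/2}\mathbf 1_F$ would be an almost invariant vector for the fixed set $\{u,w\}$. For the natural choices (an elementary transvection $u$ and the signed cyclic permutation matrix $w$) one has $\langle u,w\rangle=\SL_n(\Z)$. Indeed, conjugating $u$ by powers of $w$ gives transvections along a full cycle of index pairs, and commutators then give all elementary matrices. Property $(T)$ therefore gives $\max(\|\pi(u)\xi-\xi\|,\|\pi(w)\xi-\xi\|)\geq\kappa(\Gamma,\{u,w\})>0$ for every unit vector, uniformly in $M$ and $N$.

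You can see this directly. With $u=I_n+E_{21}$, the block $w^{n-1}F_M=\{[e_n\pm ke_1]:|k|\leq M\}$ is sent to lines $[e_n\pm k(e_1+e_2)]$. For $k\neq 0$ these have three nonzero coordinates, so they are not in $F$. Hence $|uF\setminus F|\geq 2M$ while $|F|\leq n(2M+1)$.

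Relaxing exact $w$-invariance does not help. Step (ii) asks both $u'^N$ and $b=wu'^N$ to almost preserve $F$, and then $w=bu'^{-N}$ does too, since $\|\pi(w)\xi-\xi\|\leq\|\pi(b)\xi-\xi\|+\|\pi(u'^N)\xi-\xi\|$. Three further pieces are not developed into arguments: the alternative of letting $b$ fix most of $F$; the generation claim for $\langle u,wu'^N\rangle$ (the gcd trick is never tied to $b_N$); and the ``Margulis-type'' fallback.

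The idea you are missing is to choose the representation \emph{after} the generating pair. Then one generator can act exactly trivially on a long stretch that the other generator merely shifts. The paper uses $\ell^2(H\backslash\Gamma)$ with $H=\langle t^jxt^{-j}:0\leq j<m\rangle$. Here $x$ fixes each of $H,Ht,\dots,Ht^{m-1}$, and $t$ moves this corridor by one step. The vector $\xi=m^{-1/2}\sum_{j<m}\delta_{Ht^j}$ then gives $\kappa(\Gamma,\{x,t\})\leq\sqrt{2/m}$, provided $[\Gamma:H]=\infty$ and $H,Ht,\dots,Ht^m$ are distinct.

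The real work is the infinite-index statement. The paper takes $x=a^Nb$ and $t=a^{-1}$, obtained by Nielsen moves from an explicit pair $(p,q)$ satisfying flag-opposition conditions, so generation is automatic. It shows $\langle a,c\rangle$ is free and Anosov via the Dey--Kapovich combination theorem. A relative ping-pong then gives $H=\langle J_m,a^Nb\rangle\cong J_m*\Z\cong F_m$. A free group cannot have finite index in a property $(T)$ group, so $[\Gamma:H]=\infty$. Distinctness of the cosets follows from perfectness of $\SL_n(\Z)$. Your proposal has no counterpart of this freeness step, which is where all the difficulty lies.
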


The main idea of the proof is to construct, for arbitrarily large $m$,
a pair $\{x,t\}$ generating $\Gamma=\SL_n(\Z)$ such that
$
H_m=\langle t^jxt^{-j}:0\leq j<m\rangle
$
is free of rank $m$. Thus the conjugates must freely generate a
subgroup even though $x$ and $t$ together generate all of $\Gamma$.
We start from an explicit generating pair and vary it through
Nielsen transformations, which preserve generation of $\Gamma$.
To prove that the displayed conjugates freely generate $H_m$,
we use Anosov dynamics and a relative ping-pong argument on a
suitable flag manifold. The choice of the initial pair and the
verification of the required separation conditions are the main
geometric ingredients, and are treated separately for $n\neq4$
and $n=4$. Once freeness is established, property $(T)$ implies
that $H_m$ has infinite index, while a separate algebraic argument
gives the required distinctness of the cosets.

For $n\neq4$, we take the Jordan unipotent matrix
$U=I_n+\sum_{i=1}^{n-1}E_{i,i+1}$ and its transpose, which generate
$\SL_n(\Z)$ by \cite{GowTamburini1993}. Their commutator has eigenvalues
$\lambda,1,\ldots,1,\lambda^{-1}$, where $\lambda>1$, so its attracting
and repelling dynamics are naturally described by line--hyperplane flags.
The required separation conditions can then be verified explicitly. 
For $n=4$, the same Jordan pair generates a subgroup of index $8$ rather 
than the whole group \cite{GowTamburini1993}, so we start from a different
generating pair \cite[Theorem~4.4]{Biswas2026}. For the pair used in our 
construction, the relevant commutator is not biproximal on projective space,
but its exterior square is biproximal. We therefore work on $\Gr_2(\R^4)$ and 
establish the required separation there.

The paper is organized as follows.
\Cref{sec:corridor-estimate} establishes the quasi-regular estimate.
\Cref{sec:flag-tools,sec:nielsen-criterion} develop the dynamical
tools and a criterion for the generating-pair construction.
The required generating pairs are constructed in
\cref{sec:jordan-family,sec:rank-four}.
Finally, \cref{sec:free-corridors,sec:cosets} establish freeness,
infinite index, and distinctness of the cosets, completing the
ingredients for the proof of \cref{thm:main}.

\section{A quasi-regular estimate for Kazhdan constants}\label{sec:corridor-estimate}

We first give a criterion for obtaining small Kazhdan constants from a quasi-regular representation.

\begin{lemma}\label{lem:corridor}
Let $\Gamma=\langle x,t\rangle$, let $m\geq1$, and set
$
 H=\left\langle t^jxt^{-j}:0\leq j<m\right\rangle.
$
Assume that $[\Gamma:H]=\infty$ and that $H,Ht,\ldots,Ht^m$ are pairwise distinct.  Then
\[
 \kappa(\Gamma,\{x,t\})\leq\sqrt{\frac{2}{m}}.
\]
\end{lemma}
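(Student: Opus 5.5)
We will work in the quasi-regular representation $\pi$ of $\Gamma$ on $\ell^2(H\backslash\Gamma)$, with $\Gamma$ acting by right translation, $\pi(g)\delta_{Hy}=\delta_{Hyg^{-1}}$. Since $[\Gamma:H]=\infty$, every $\Gamma$-orbit in $H\backslash\Gamma$ is infinite. An invariant vector in $\ell^2$ has constant modulus on orbits, so it must vanish. Hence $\cH^\Gamma=\{0\}$, and $\pi$ is admissible in the infimum \eqref{eq:kazhdan-constant}.

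The test vector will be the normalized indicator of the corridor
\[
\xi=\frac1{\sqrt m}\sum_{j=1}^{m}\delta_{Ht^{j}} .
\]
Since the cosets $Ht,\dots,Ht^m$ are pairwise distinct, $\|\xi\|=1$.

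We first estimate the action of $t$. Up to a harmless inversion convention (choose $\pi(g)\delta_{Hy}=\delta_{Hyg}$ if more convenient, since $\|\pi(t^{-1})\xi-\xi\|=\|\pi(t)\xi-\xi\|$), translation by $t$ shifts the corridor by one step, sending $\{Ht^j\}_{1\le j\le m}$ to $\{Ht^{j+1}\}_{1\le j\le m}$. The two families share the $m-1$ cosets $Ht^2,\dots,Ht^m$. The shifted family contains $Ht^{m+1}$ and the original contains $Ht$. We must check that $Ht^{m+1}\neq Ht$. This holds because $Ht^{m+1}=Ht$ would give $Ht^m=H$, contradicting the distinctness of $H$ and $Ht^m$. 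The difference is therefore $\frac1{\sqrt m}(\delta_{Ht^{m+1}}-\delta_{Ht})$ with distinct basis vectors, so its norm is $\sqrt{2/m}$.

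Next we show that $x$ fixes every corridor point. For $1\le j\le m$ we have
\[
Ht^jx=Ht^jxt^{-j}t^j .
\]
The element $t^jxt^{-j}$ lies in $H$ only when $0\le j<m$, so the index range must match. I therefore shift the corridor to $\xi=\frac1{\sqrt m}\sum_{j=0}^{m-1}\delta_{Ht^{j}}$ (or $\delta_{Ht^{-j}}$, depending on the side convention). With this choice $Ht^jx=H(t^jxt^{-j})t^j=Ht^j$ for $0\le j<m$, so $\pi(x)\xi=\xi$ exactly. Distinctness of $H,Ht,\dots,Ht^{m-1}$ gives $\|\xi\|=1$. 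The $t$-shift now trades $H$ for $Ht^m$, and these are distinct by hypothesis, so the $t$-displacement is again $\sqrt{2/m}$.

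Combining the two estimates gives $\max_{s\in\{x,t\}}\|\pi(s)\xi-\xi\|\le\sqrt{2/m}$, which bounds $\kappa(\Gamma,\{x,t\})$ as claimed. The only delicate point is the bookkeeping of left versus right conventions. One must match the corridor indices $0,\dots,m-1$ with the generators $t^jxt^{-j}$, $0\le j<m$, of $H$, and use the hypothesis on $H,\dots,Ht^m$ exactly once, namely $H\ne Ht^m$ for the endpoints. Under the convention $\pi(g)\delta_{Hy}=\delta_{Hyg^{-1}}$, $x$ still fixes $\delta_{Ht^j}$ because $Ht^jx^{-1}=Ht^j$ as well. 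The $t$-shift then moves $Ht^j$ to $Ht^{j-1}$, and the boundary terms are $Ht^{-1}$ versus $Ht^{m-1}$, which are distinct exactly when $H\neq Ht^m$. So the argument works with either convention.
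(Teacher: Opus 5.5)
Your final argument is correct and is exactly the paper's proof: the right quasi-regular representation $\pi(g)\delta_{Hy}=\delta_{Hyg^{-1}}$ on $\ell^2(H\backslash\Gamma)$, the vector $\xi=m^{-1/2}\sum_{j=0}^{m-1}\delta_{Ht^j}$ fixed by $x$, and a telescoping $t$-displacement of norm $\sqrt{2/m}$ because $H\neq Ht^m$ (the paper computes $\pi(t^{-1})\xi-\xi=m^{-1/2}(\delta_{Ht^m}-\delta_H)$ rather than your $\delta_{Ht^{-1}}-\delta_{Ht^{m-1}}$). In a write-up, drop the false start with indices $1,\dots,m$ and the suggested convention $\delta_{Hy}\mapsto\delta_{Hyg}$, which is an anti-homomorphism rather than a unitary representation; it is harmless here only because all you actually use is $\|\pi(t)\xi-\xi\|=\|\pi(t^{-1})\xi-\xi\|$.
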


\begin{proof}
Let $\pi$ be the right quasi-regular representation on $\ell^2(H\backslash\Gamma)$, with
$\pi(g)\delta_{H\gamma}=\delta_{H\gamma g^{-1}}$.  Since $H$ has infinite index and the action on $H\backslash\Gamma$ is transitive, $\pi$ has no nonzero invariant vector.  Put
$
 \xi=\frac1{\sqrt m}\sum_{j=0}^{m-1}\delta_{Ht^j}.
$
For $0\leq j<m$,
\[
 Ht^jx^{-1}=H(t^jx^{-1}t^{-j})t^j=Ht^j,
\]
so $\pi(x)\xi=\xi$.  The vectors $\delta_{Ht^j}$, $0\leq j\leq m$, are mutually orthogonal, and hence
$
 \|\pi(t^{-1})\xi-\xi\|^2=\frac2m.
$
Since $\pi(t)$ is unitary, the same norm is obtained with $t$ in place of $t^{-1}$.  The result follows from \eqref{eq:kazhdan-constant}.
\end{proof}

\section{Flag dynamics and relative ping-pong}\label{sec:flag-tools}

This section develops the flag-dynamical tools used later to prove
the required freeness statements.

\subsection{The two flag manifolds}

For $n\geq3$, let
\[
\cF_{1,n-1}
=\{(\ell,H):\ell\subset H\subset\R^n,\ \dim\ell=1,\ \dim H=n-1\}.
\]
We write a flag as $[v,\theta]=(\R v,\ker\theta)$,
where $v\in\R^n\setminus\{0\}$ and
$\theta\in(\R^n)^*\setminus\{0\}$ satisfy $\theta(v)=0$.
The action is $g[v,\theta]=[gv,\theta\circ g^{-1}]$.

For $n=4$, we also use $\cG=\Gr_2(\R^4)$. Fix
$\Omega=e_1\wedge e_2\wedge e_3\wedge e_4$ and define a symmetric
bilinear form $\mathcal B$ on $\wedge^2\R^4$ by
\[
	\xi\wedge\eta=\mathcal B(\xi,\eta)\Omega.
\]
If $E=\operatorname{span}(u_1,u_2)\in\cG$, any nonzero multiple of
$u_1\wedge u_2$ is called a decomposable representative of $E$.
The form $\mathcal B$ is preserved by $\wedge^2g$ for every
$g\in\SL_4(\R)$.

\begin{definition}[Opposition]
	Two flags $[v,\theta],[v',\theta']\in\cF_{1,n-1}$ are said to be
	opposite, written
	$[v,\theta]\operatorname{opp}[v',\theta']$, if
	\begin{equation}\label{eq:line-hyperplane-opposition}
		\theta(v')\neq0
		\quad\text{and}\quad
		\theta'(v)\neq0.
	\end{equation}
	
	For $E,F\in\cG$, we say that $E$ and $F$ are opposite, written
	$E\operatorname{opp}F$, if $E\oplus F=\R^4$.
	For either flag manifold $\cX$, we write
	$\Opp(\xi)=\{\eta\in\cX:\eta\operatorname{opp}\xi\}$.
	Two subsets $A,B\subset\cX$ are called mutually opposite if every
	flag in $A$ is opposite to every flag in $B$.
\end{definition}

If $\mathbf e,\mathbf f$ are nonzero decomposable representatives
of $E,F\in\cG$, respectively, then
\begin{equation}\label{eq:grassmann-opposition}
	E\operatorname{opp}F
	\quad\Longleftrightarrow\quad
	\mathcal B(\mathbf e,\mathbf f)\neq0.
\end{equation}

\begin{definition}[Proximality and biproximality]
	An element $g\in\GL(V)$ is called proximal on $\PP(V)$ if
	it has a unique eigenvalue of maximal modulus and this eigenvalue
	is simple. Equivalently, its action on $\PP(V)$ has a unique
	attracting fixed point.
	
	We use the following notions of biproximality:
	\begin{itemize}
		\item An element $g\in\SL_n(\R)$ is called biproximal on
		$\cF_{1,n-1}$ if both $g$ and $g^{-1}$ are proximal on
		$\PP(\R^n)$.
		
		\item An element $g\in\SL_4(\R)$ is called biproximal on
		$\cG=\Gr_2(\R^4)$ if both $\wedge^2g$ and
		$(\wedge^2g)^{-1}$ are proximal on
		$\PP(\wedge^2\R^4)$.
	\end{itemize}
\end{definition}
 
 In the Grassmannian case, the attracting fixed points above do
indeed belong to the Pl\"ucker-embedded Grassmannian $\Gr_2(\R^4)\subset\PP(\wedge^2\R^4)$. 
Let $A=\wedge^2g$, and choose $\xi\neq0$ such that
$A\xi=\lambda\xi$ and $[\xi]$ is the attracting eigenline of $A$.
Since
$
\det A=(\det g)^3=1
$
and $A$ is proximal, its dominant eigenvalue satisfies
$|\lambda|>1$. Indeed, otherwise the product of the moduli of all
the eigenvalues of $A$ would be strictly less than $1$. Since $A$
preserves $\mathcal B$,
$
\mathcal B(\xi,\xi)
=\mathcal B(A\xi,A\xi)
=\lambda^2\mathcal B(\xi,\xi).
$
Thus $\mathcal B(\xi,\xi)=0$. In $\wedge^2\mathbb R^4$, for
$\xi\neq0$,
\[
\mathcal B(\xi,\xi)=0
\quad\Longleftrightarrow\quad
\xi\wedge\xi=0
\quad\Longleftrightarrow\quad
\xi\ \text{is decomposable}.
\]
Hence $[\xi]$ belongs to the Pl\"ucker image of
$\Gr_2(\mathbb R^4)$. Applying the same argument to $A^{-1}$
gives the corresponding conclusion for the repelling eigenline.
Consequently, the projective proximal dynamics restricts to
$\Gr_2(\mathbb R^4)$.

 For $g$ biproximal on either flag manifold $\cX$, we denote by
 $g^+$ and $g^-$ the attracting flags of $g$ and $g^{-1}$,
 respectively. As $k\to+\infty$,
 $g^k|_{\Opp(g^-)}\to g^+$ and
 $g^{-k}|_{\Opp(g^+)}\to g^-$ locally uniformly.
 
 We use the standard notion of an Anosov subgroup for a fixed flag
 type; see \cite[Section~1.8]{DeyKapovich2023}.
 We call a subgroup $\cX$-Anosov if it is Anosov for the flag type
 of $\cX$. In particular, $\cG$-Anosov and $P_2$-Anosov both mean
 Anosov for the flag type $\{2\}$.
 
 If $g\in\SL_n(\R)$ is biproximal on $\cF_{1,n-1}$, then the
 cyclic subgroup $\langle g\rangle$ is Anosov for the flag type
 $\{1,n-1\}$. If $g\in\SL_4(\R)$ is biproximal on $\Gr_2(\R^4)$,
 then the cyclic subgroup $\langle g\rangle$ is Anosov for the
 flag type $\{2\}$.

\subsection{Asymptotic dynamics of $z^Lu$}

\begin{lemma}\label{lem:regular-ray-general}
Let $\cX$ be either $\cF_{1,n-1}$ or $\cG$, and let $z$ be biproximal on $\cX$.  Put $P=z^+$ and $R=z^-$.  For $u\in\SL_n(\R)$, set
$g_L=z^Lu$, $Q=u^{-1}R$. Then, as $L\to\infty$,
\[
 g_L|_{\Opp(Q)}\longrightarrow P,
 \qquad
 g_L^{-1}|_{\Opp(P)}\longrightarrow Q
\]
locally uniformly.  If $P$ and $Q$ are opposite, then $g_L$ is biproximal on $\cX$ for all sufficiently large $L$.
\end{lemma}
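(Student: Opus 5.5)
The convergence statements follow by composing $u$ with the known dynamics of powers of $z$; biproximality then comes from a contraction (Brouwer/Banach-type) argument on a compact neighbourhood of $P$.

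\emph{Convergence.} First take $K\subset\Opp(Q)$ compact. By the $\SL_n(\R)$-equivariance of opposition, $\xi\operatorname{opp}Q=u^{-1}R$ if and only if $u\xi\operatorname{opp}R$. Hence $uK$ is a compact subset of $\Opp(z^-)$. The biproximal dynamics of $z$ recalled above gives $z^L\to P$ uniformly on $uK$, so $g_L=z^Lu\to P$ uniformly on $K$.

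Now take $K\subset\Opp(P)$ compact. Here $g_L^{-1}=u^{-1}z^{-L}$, and $z^{-L}\to z^-=R$ uniformly on $K$. Since $u^{-1}$ is a homeomorphism of $\cX$, it follows that $g_L^{-1}\to u^{-1}R=Q$ uniformly on $K$.

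\emph{Biproximality.} Assume $P\operatorname{opp}Q$. Choose a small compact neighbourhood $N_P$ of $P$ contained in $\Opp(Q)$, and a compact neighbourhood $N_Q$ of $Q$ contained in $\Opp(P)$; both exist because $\Opp(\cdot)$ is open. By the convergence just proved, for large $L$ we have $g_L(N_P)\subset\Int N_P$ and $g_L^{-1}(N_Q)\subset\Int N_Q$. This does not by itself give proximality, since a map of a ball into itself need not have an attracting fixed point.

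To get proximality, I would work in $\PP(V)$, with $V=\R^n$ for $\cF_{1,n-1}$ (and the dual space for the hyperplane part) and $V=\wedge^2\R^4$ for $\cG$. The criterion I would use is the standard one. If a linear map $A$ sends a properly convex closed cone neighbourhood $C$ of a line $p$ into the interior of $C$ with projective contraction (in the Hilbert metric of $C$), then $A$ has an attracting fixed point in $C$. That fixed point is its unique dominant simple eigenline, so $A$ is proximal. Equivalently one can argue via singular values: $\|z^Lu\|$ grows like $|\lambda_1(z)|^L$, with a gap $\sigma_1/\sigma_2\to\infty$, and convergence of the top singular directions to $P$ and $Q$ together with transversality $P\operatorname{opp}Q$ forces $\lambda_1(g_L)\approx\sigma_1(g_L)$. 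For $\cF_{1,n-1}$ I would apply this to $g_L$ on $\PP(\R^n)$ with the line components and to the dual action with the hyperplanes, giving proximality of $g_L$ and $g_L^{-1}$. For $\cG$ I would apply it to $\wedge^2 g_L$ and its inverse on $\PP(\wedge^2\R^4)$. The attracting line of $\wedge^2 z$ plays the role of $P$, and opposition is the nonvanishing of $\mathcal B$ by \eqref{eq:grassmann-opposition}, which corresponds to being off the projective hyperplane $\mathcal B(\cdot,\mathbf q)=0$.

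\emph{Main obstacle.} The expected hard step is this last part: upgrading "maps a neighbourhood into itself" to genuine proximality (a simple dominant eigenvalue) and handling both $g_L$ and $g_L^{-1}$ uniformly in the two settings. I would do it concretely with a singular value decomposition. Write $z^L=k_L a_L k'_L$, so that $g_L$ has $\sigma_1/\sigma_2\geq c\,(|\lambda_1|/|\lambda_2|)^L$. Then show that the top eigenvector of $g_L$ lies near $P$ and has eigenvalue of size comparable to $\sigma_1$, because $\langle$top output direction, top input direction$\rangle$ stays bounded away from zero exactly when $P\operatorname{opp}Q$.
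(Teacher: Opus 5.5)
Your proof is correct: the convergence part matches the paper's, and the biproximality part reaches the same conclusion by a different route. For convergence, both you and the paper transport $\Opp(Q)$ to $\Opp(R)$ by $u$ and then use the dynamics of $z^{\pm L}$.

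For biproximality, the paper stays entirely on the spectral side. Since $\rho(z)$ is proximal, $\lambda_+^{-L}\rho(z)^L$ converges to the spectral projection $v_+\otimes\varphi_+$. Hence $\lambda_+^{-L}\rho(z^Lu)\to v_+\otimes(\varphi_+\rho(u))$, a rank-one operator whose only nonzero eigenvalue $\varphi_+(\rho(u)v_+)$ is, up to a scalar, $\theta_-(uv_+)$ for $\cF_{1,n-1}$ and $\mathcal B(\mathbf p,\mathbf q)$ for $\cG$. Continuity of roots then gives a simple dominant eigenvalue of $\rho(g_L)$ for large $L$. The same argument with $z^{-1}$ and $u^{-1}$ handles $g_L^{-1}$, using the other half $\theta_+(u^{-1}v_-)\neq0$ of opposition.

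Your SVD plan is this same argument normalized by $\sigma_1(g_L)$ instead of $\lambda_+^L$. The top output and input directions of $z^Lu$ converge to $v_+$ and $\varphi_+\circ\rho(u)$ precisely because of that rank-one limit. The paper's normalization is therefore more economical: it is an honest limit, with no singular vectors to track. If you keep the SVD version, two points need tightening:
\begin{itemize}
\item The bound $\sigma_1/\sigma_2\geq c(|\lambda_1|/|\lambda_2|)^L$ can fail by polynomial factors in $L$ when there are Jordan blocks. This is harmless, since only $\sigma_1/\sigma_2\to\infty$ is needed.
\item ``One eigenvalue of size comparable to $\sigma_1$'' is not yet proximality. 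You must also control the other eigenvalues, e.g.\ by Weyl's inequality $|\lambda_1\lambda_2|\leq\sigma_1\sigma_2$, or by continuity of eigenvalues over the compact family of limiting rank-one maps.
\end{itemize}

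Your cone alternative is genuinely different, and it also works. You need uniform convergence on the full projective spaces $\PP(\R^n)$, $\PP((\R^n)^*)$ or $\PP(\wedge^2\R^4)$, not just on $\cX$, since a cone contains non-flag points. That convergence shows that $g_L$ or $-g_L$ maps a thin closed cone around the limiting line (e.g.\ $\R v_+$, or $\R\mathbf p$ for $\wedge^2 g_L$) into its interior. The sign change may be necessary, because the image can land in $-\Int C$. The Perron--Frobenius (Krein--Rutman) theorem for cone-positive maps then yields proximality.

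The cone route deduces biproximality directly from the dynamics already proved, at the cost of citing that cone theorem. The paper's route is self-contained and makes it transparent that the needed condition is exactly the nonvanishing of the opposition pairing.
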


\begin{proof}
	For $\cF_{1,n-1}$, write $P=[v_+,\theta_+]$ and
	$R=[v_-,\theta_-]$. Since $z$ is biproximal, the standard
	dynamics of proximal transformations
	\cite{Benoist1997}; see also
	\cite[Lemma~2.87 and Corollary~2.88]{DrutuKapovich}, gives
	\[
	z^L[v]\longrightarrow[v_+]
	\quad\text{locally uniformly on }
	\PP(\R^n)\setminus\PP(\ker\theta_-),
	\]
	and
	\[
	(z^{-L})^{\transp}[\theta]\longrightarrow[\theta_+]
	\quad\text{locally uniformly on }
	\PP((\R^n)^*)\setminus
	\PP\{\theta:\theta(v_-)=0\}.
	\]
	Hence $z^L[v,\theta]\to P$ locally uniformly whenever
	$\theta_-(v)\neq0$ and $\theta(v_-)\neq0$, that is, on
	$\Opp(R)$.
	
	Now $Q=u^{-1}R$. Since opposition is equivariant,
	$[v,\theta]\in\Opp(Q)$ if and only if
	$u[v,\theta]\in\Opp(R)$. Therefore
	$g_L[v,\theta]=z^Lu[v,\theta]\to P$ locally uniformly on
	$\Opp(Q)$. Similarly, $z^{-L}|_{\Opp(P)}\to R$ locally uniformly,
	and since $g_L^{-1}=u^{-1}z^{-L}$, we obtain
	$g_L^{-1}|_{\Opp(P)}\to u^{-1}R=Q$ locally uniformly.
	
	For $\cG$, let $\mathbf p$ and $\mathbf r$ be nonzero decomposable
	representatives of $P$ and $R$, respectively. Since $\wedge^2z$
	preserves $\mathcal B$ and $[\mathbf r]$ is the attracting point
	of $(\wedge^2z)^{-1}$, the repelling hyperplane of $\wedge^2z$ is
	$\PP\{\xi:\mathcal B(\xi,\mathbf r)=0\}$. Thus the standard
	proximal dynamics of $\wedge^2z$ gives
	$(\wedge^2z)^L[\mathbf e]\to[\mathbf p]$ locally uniformly outside
	this projective hyperplane. Restricting to the Pl\"ucker quadric
	and using \eqref{eq:grassmann-opposition}, this exceptional set
	corresponds exactly to the planes $E\in\cG$ which are not opposite
	to $R$. Hence $z^L|_{\Opp(R)}\to P$ locally uniformly. Since
	$Q=u^{-1}R$ and opposition is equivariant,
	$g_L|_{\Opp(Q)}\to P$ locally uniformly. Applying the same argument
	to $z^{-1}$ and using $g_L^{-1}=u^{-1}z^{-L}$ gives
	$g_L^{-1}|_{\Opp(P)}\to Q$ locally uniformly.
	
	It remains to prove eventual biproximality. Let $\rho$ denote the
	relevant projective representation, namely the standard
	representation on $\R^n$ for $\cF_{1,n-1}$ and $\wedge^2$ on
	$\wedge^2\R^4$ for $\cG$. Let $\lambda_+$ be the dominant
	eigenvalue of $\rho(z)$. Since $\rho(z)$ is proximal, after
	normalization we have
	\[
	\lambda_+^{-L}\rho(z)^L\longrightarrow v_+\otimes\varphi_+,
	\qquad
	\lambda_+^{-L}\rho(z^Lu)
	\longrightarrow v_+\otimes(\varphi_+\rho(u)),
	\]
	where $[v_+]$ is the attracting point and $\ker\varphi_+$ is the
	repelling hyperplane. The rank-one operator on the right has
	nonzero eigenvalue $(\varphi_+\rho(u))(v_+)$. For
	$\cF_{1,n-1}$, this quantity is, up to a nonzero scalar,
	$\theta_-(uv_+)$; for $\cG$, it is, up to a nonzero scalar,
	$\mathcal B((\wedge^2u)\mathbf p,\mathbf r)
	=\mathcal B(\mathbf p,(\wedge^2u^{-1})\mathbf r)$. In either case
	it is nonzero because $P\operatorname{opp}Q$, where $Q=u^{-1}R$.
	
	Thus the limiting operator has one simple nonzero eigenvalue and
	all its remaining eigenvalues are zero. By continuity of the roots
	of the characteristic polynomial, $\rho(z^Lu)$ has a unique
	eigenvalue of maximal modulus for all sufficiently large $L$.
	Hence $z^Lu$ is proximal in the relevant projective representation.
	
	For the inverse, let $\mu_+$ be the dominant eigenvalue of
	$\rho(z^{-1})$. After normalization,
	\[
	\mu_+^{-L}\rho(z^{-L})\longrightarrow v_-\otimes\varphi_-,
	\qquad
	\mu_+^{-L}\rho(g_L^{-1})
	\longrightarrow \rho(u^{-1})v_-\otimes\varphi_-,
	\]
	where $[v_-]$ is the attracting point of $\rho(z^{-1})$ and
	$\ker\varphi_-$ is its repelling hyperplane. The unique possibly
	nonzero eigenvalue of the limiting rank-one operator is
	$\varphi_-(\rho(u^{-1})v_-)$. For $\cF_{1,n-1}$, this is, up to a
	nonzero scalar, $\theta_+(u^{-1}v_-)$; for $\cG$, it is, up to a
	nonzero scalar,
	$\mathcal B((\wedge^2u^{-1})\mathbf r,\mathbf p)$. Again, it is
	nonzero because $P\operatorname{opp}Q$.
	
	The same continuity argument shows that $g_L^{-1}$ is proximal for
	all sufficiently large $L$. Therefore $g_L$ is biproximal.
\end{proof}

\subsection{Anosov combination and relative ping-pong}

For either choice of $\cX$, let
\[
G=
\begin{cases}
\SL_n(\R), & \text{if }\cX=\cF_{1,n-1},\\
\SL_4(\R), & \text{if }\cX=\cG.
\end{cases}
\]
In \cref{prop:admissible-criterion}, we use the following
specialization of the Klein--Maskit combination theorem of
Dey and Kapovich \cite[Theorem~A]{DeyKapovich2023}.

\begin{theorem}[Dey--Kapovich]\label{thm:DK}
Let $A,B\subset\cX$ be compact subsets with nonempty interiors,
and assume that $A$ and $B$ are mutually opposite. Let
$\Gamma_A,\Gamma_B<G$ be $\cX$-Anosov subgroups such that
\[
\alpha B\subset\Int(A)\quad(1\neq\alpha\in\Gamma_A),
\qquad
\beta A\subset\Int(B)\quad(1\neq\beta\in\Gamma_B).
\]
Then $\langle\Gamma_A,\Gamma_B\rangle
\cong\Gamma_A*\Gamma_B$, and $\langle\Gamma_A,\Gamma_B\rangle$ is $\cX$-Anosov.
\end{theorem}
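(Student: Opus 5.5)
The plan is to obtain the statement as a direct specialization of \cite[Theorem~A]{DeyKapovich2023}, proving the algebraic half (the free product decomposition) by hand with a ping-pong argument, and citing Dey--Kapovich only for the Anosov conclusion.

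\emph{The free product.} I will take this to be the standard table-tennis lemma on $\cX$. Let $w=\gamma_1\gamma_2\cdots\gamma_k$ be a reduced word with letters alternately in $\Gamma_A\setminus\{1\}$ and $\Gamma_B\setminus\{1\}$. First conjugate $w$ by an element of one factor so that it begins and ends with letters from $\Gamma_A$; this needs a little care when $k$ is small or a factor is $\Z/2$. Then apply $w$ to a point of $\Int(B)\setminus A$. Such a point exists: mutual opposition forces $A\cap B=\emptyset$, since a flag is never opposite to itself. For $\cF_{1,n-1}$ this holds because $\theta(v)=0$, and for $\cG$ because $E\oplus E\neq\R^4$. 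The nesting hypotheses
\[
\alpha B\subset\Int(A),\qquad \beta A\subset\Int(B)
\]
push the point successively into $\Int(A)$, then $\Int(B)$, and so on, ending in $\Int(A)$. Since $A\cap B=\emptyset$, the point is moved, so $w\neq1$. Hence the natural map $\Gamma_A*\Gamma_B\to\langle\Gamma_A,\Gamma_B\rangle$ is injective.

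\emph{The Anosov property.} Here I will check that our hypotheses imply those of \cite[Theorem~A]{DeyKapovich2023} for the flag type of $\cX$, namely $\{1,n-1\}$ or $\{2\}$. Their theorem is phrased for antipodal subsets of the flag manifold $G/P$ forming an ``interactive pair''. Our notion of opposition is exactly their antipodality: transversality of the line--hyperplane flag in the first case, and complementary $2$-planes in the second, a self-opposite flag type. Their interactive-pair condition asks for compact sets $A,B$ with nonempty interior that are mutually antipodal and satisfy the strict nesting above. Their remaining conditions concern the interiors, and are needed so that limit sets sit inside $\Int(A)$ and $\Int(B)$; I expect these to be implied by the strict inclusions into interiors, possibly after slightly shrinking $A,B$ using compactness.

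The Anosov hypothesis on $\Gamma_A$ and $\Gamma_B$ is exactly what Theorem~A needs in order to conclude that $\langle\Gamma_A,\Gamma_B\rangle$ is $\cX$-Anosov. The underlying mechanism is that elements of reduced words of length $k$ contract $A\cup B$ uniformly into progressively smaller nested sets. This yields the linear growth of the relevant singular-value gaps in word length, via the characterization of Anosov subgroups by uniform contraction on an open set containing the limit set. As a by-product, it also recovers injectivity of the map from the free product.

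\emph{Main obstacle.} I expect the hardest step to be matching the hypotheses precisely with Dey--Kapovich's formulation, in particular their requirement that the generalized ping-pong sets be ``thick'' or strictly nested in the appropriate Finsler sense. If that does not follow verbatim, I would enlarge $A$ and $B$ slightly to compact neighbourhoods that are still mutually opposite, using that opposition is an open condition. I would then check the strict nesting for these enlarged sets, using compactness of $B$ and $A$ together with only finitely many ``short'' elements of $\Gamma_A$ and $\Gamma_B$ needing separate control; the remaining elements are handled by the Anosov contraction toward the limit sets.
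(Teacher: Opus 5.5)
Your proposal is correct and agrees with the paper, which gives no independent proof and simply invokes \cite[Theorem~A]{DeyKapovich2023}; that theorem's conclusion already includes the free-product isomorphism, so your hand-made ping-pong for injectivity is a harmless extra. The one loose end you flag, factors of order $2$, needs no conjugation trick: for a reduced word $w=\alpha_1\beta_1\cdots\alpha_r\beta_r$, the nesting gives $wA\subset\alpha_1B\subset\Int(A)$, so $w=1$ would make $A$ a nonempty proper clopen subset of the connected space $\cX$.
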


We shall also need a relative version of subgroup ping-pong.
 A $\cX$-Anosov subgroup $F<G$ is word-hyperbolic and admits
 a continuous $F$-equivariant embedding $\zeta:\partial F\to\cX$
 such that $\zeta(x)$ and $\zeta(y)$ are opposite whenever $x\neq y$.
 Moreover, $\zeta$ is dynamics preserving: if $g\in F$ has infinite
 order and $g_F^+,g_F^-\in\partial F$ are its attracting and repelling
 fixed points, then $\zeta(g_F^+)=g^+$ and $\zeta(g_F^-)=g^-$.

 Moreover, let $(h_r)$ be an escaping sequence in $F$, that is, a
 sequence leaving every finite subset of $F$. After passing to a
 subsequence, which we do not relabel, there exist
 $x_+,x_-\in\partial F$ such that
 $h_r\to x_+$ and $h_r^{-1}\to x_-$ in the Gromov compactification
 $F\cup\partial F$. For this subsequence,
 \[
 h_r|_{\Opp(\zeta(x_-))}
 \longrightarrow
 \zeta(x_+)
 \]
 locally uniformly; see
 \cite[Section~1.8 and Proposition~1.6]{DeyKapovich2023}.

We will use the following relative ping-pong criterion in
\cref{prop:free-product-corridor}.

\begin{proposition}\label{prop:relative-pingpong}
	Let $F<G$ be a non-elementary $\cX$-Anosov subgroup with boundary map
	$\zeta:\partial F\to\cX$, and let $J<F$ be a non-elementary
	quasiconvex subgroup. Let $p,q\in\partial F\setminus\partial J$ and assume
	\begin{equation}\label{eq:relative-orbits}
		\operatorname{Stab}_J(p)=\operatorname{Stab}_J(q)=\{1\},
		\qquad
		Jp\cap Jq=\varnothing.
	\end{equation}
	Let $(g_N)$ be a sequence in $G$ such that
	\begin{align}
		g_N|_{\operatorname{Opp}(\zeta(q))}
		&\longrightarrow\zeta(p),\label{eq:relative-plus}\\
		g_N^{-1}|_{\operatorname{Opp}(\zeta(p))}
		&\longrightarrow\zeta(q).\label{eq:relative-minus}
	\end{align}
	locally uniformly. Then, for all sufficiently large $N$,
	\[
	\langle J,g_N\rangle\cong J*\langle g_N\rangle.
	\]
	In particular, $g_N$ has infinite order for all sufficiently large $N$.
\end{proposition}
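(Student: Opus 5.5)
We prove the statement by a ping-pong argument in $\cX$ between the $J$-translates of small neighbourhoods of $\zeta(p)$ and $\zeta(q)$ and the powers of $g_N$.

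\emph{Step 1: choose the neighbourhoods.} Since $J$ is quasiconvex in the hyperbolic group $F$, it is itself hyperbolic and acts on $\partial F$ with limit set $\partial J$, properly discontinuously on $\partial F\setminus\partial J$. Hypothesis \eqref{eq:relative-orbits} says that $p$ and $q$ have trivial stabilisers and lie in distinct $J$-orbits. Hence there are closed neighbourhoods $U_p\ni p$ and $U_q\ni q$ in $\partial F$, disjoint from $\partial J$, such that the sets $jU_p$ and $jU_q$ ($j\in J$) are pairwise disjoint.

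Let $D=\bigcup_{j\in J}j(U_p\cup U_q)$. Its closure is contained in $D\cup\partial J$. Transport everything by $\zeta$ and thicken in $\cX$: choose small compact neighbourhoods $A_p\ni\zeta(p)$ and $A_q\ni\zeta(q)$ in $\cX$.

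The point we need is that every $j\ne1$ maps $A_p\cup A_q$ into a region $\mathcal W$ that is disjoint from $A_p\cup A_q$. We also need every point of $\mathcal W\cup A_p$ to be opposite to $\zeta(q)$, and every point of $\mathcal W\cup A_q$ to be opposite to $\zeta(p)$.

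\emph{Step 2: the key estimate.} This comes from the convergence property of Anosov subgroups recalled above. Suppose infinitely many $j_r\in J$ moved $A_p$ close to $\zeta(p)$ or $\zeta(q)$. Pass to a subsequence with $j_r\to x_+$ and $j_r^{-1}\to x_-$, where $x_\pm\in\partial J$. Since $\zeta(p)$ and $\zeta(q)$ are opposite to $\zeta(x_-)$ (because $p,q\notin\partial J$), we get $j_r A_p\to\zeta(x_+)$ once $A_p$ is small. But $\zeta(x_+)$ is opposite to, and hence distinct from, $\zeta(p)$ and $\zeta(q)$, a contradiction. Only finitely many $j$ remain, and these are handled by equivariance and injectivity of $\zeta$ together with the disjointness from Step 1.

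So we take $\mathcal W$ to be a small closed neighbourhood of $\zeta(\overline{JU_p\cup JU_q}\setminus(U_p\cup U_q))\cup\zeta(\partial J)$. Transversality of $\zeta$ gives that $\zeta(\partial J)$ and the $\zeta(jU)$ with $j\ne1$ are opposite to $\zeta(p)$ and $\zeta(q)$. Compactness, together with openness of the opposition relation, then makes the oppositeness conditions hold after shrinking.

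This uniform control over the infinitely many elements of $J$ is the main obstacle. I would first try to prove it exactly as above, by contradiction using subsequences and the locally uniform convergence $h_r|_{\Opp(\zeta(x_-))}\to\zeta(x_+)$.

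\emph{Step 3: ping-pong.} Set $X_1=\mathcal W$ and $X_2=A_p\cup A_q$. By Step 2, $jX_2\subset X_1$ for $j\ne1$. The set $X_1\cup A_p$ is a compact subset of $\Opp(\zeta(q))$, so by \eqref{eq:relative-plus}, $g_N(X_1\cup A_p)\subset A_p$ for large $N$. Likewise, by \eqref{eq:relative-minus}, $g_N^{-1}(X_1\cup A_q)\subset A_q$. Iterating, $g_N^k(X_1)\subset A_p\subset X_2$ for $k>0$ and $g_N^k(X_1)\subset A_q\subset X_2$ for $k<0$.

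The sets $X_1$ and $X_2$ are disjoint (after shrinking) and nonempty, and $J$ is infinite. The standard ping-pong lemma then gives $\langle J,g_N\rangle\cong J*\langle g_N\rangle$ with $\langle g_N\rangle$ infinite cyclic. To see this, take a reduced word: conjugate it so that it begins and ends with a $J$-letter, and apply it to a point of $X_2$. Non-triviality follows as usual, and infinite order of $g_N$ follows from $g_N^k X_1\subset X_2$, which forces $g_N^k\neq1$.
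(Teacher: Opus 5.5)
Your proposal is correct and is essentially the paper's proof. Your $\mathcal W$ and $A_p,A_q$ play the roles of the paper's $A$ and $B_p,B_q$, and the trapping estimate $j(A_p\cup A_q)\subset\mathcal W$ for $j\neq1$ is proved by the same dichotomy: finitely many $j$ by equivariance and continuity, escaping sequences by Anosov convergence dynamics, with $A_p\cup A_q$ chosen opposite to all of $\zeta(\partial J)$ for uniformity. The $g_N$-half of the ping-pong is identical to the paper's. The only cosmetic difference is that you thicken the nontrivial $J$-translates of neighbourhoods $U_p,U_q\subset\partial F$, whereas the paper needs only the punctured orbits $(Jp\cup Jq)\setminus\{p,q\}$ together with $\partial J$.
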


\begin{proof}

	Identify $\partial J$ with the limit set of $J$ in $\partial F$,
    and put $\Omega_J=\partial F\setminus\partial J$. The action of
    $J$ on $\Omega_J$ is properly discontinuous. Indeed, otherwise
    there would exist a compact set $K\subset\Omega_J$ and a sequence
    of distinct elements $h_r\in J$ such that
    $h_rK\cap K\neq\varnothing$ for every $r$.
    After passing to a subsequence, we have
    $h_r\to x_+$ and $h_r^{-1}\to x_-$ in the Gromov compactification
    $F\cup\partial F$, with $x_\pm\in\partial J$. Since
    $\zeta(K)\subset\Opp(\zeta(x_-))$
    is compact, Anosov convergence gives
    $h_r|_{\zeta(K)}\longrightarrow\zeta(x_+)$
    uniformly. Since $\zeta(x_+)\notin\zeta(K)$ and $\zeta(K)$ is
    compact, there exists a neighborhood $U$ of $\zeta(x_+)$ disjoint
    from $\zeta(K)$. For all sufficiently large $r$,
    $h_r\zeta(K)\subset U$,
    contradicting $h_r\zeta(K)\cap\zeta(K)\neq\varnothing$.
    Consequently, every $J$-orbit in $\Omega_J$ is closed and discrete
    there, and all its accumulation points in $\partial F$ lie in
    $\partial J$.

	Set
	\[
	P_0=\{p,q\},\qquad
	S=(Jp\cup Jq)\setminus P_0,\qquad
	C_0=\partial J\cup\overline S.
	\]
	The stabilizer assumptions exclude $p$ and $q$ from the closures of
	their own punctured orbits, while $Jp\cap Jq=\varnothing$ excludes the
	cross terms. Hence $C_0$ is compact and disjoint from $P_0$. Since
	$\zeta$ sends distinct boundary points to opposite flags, every flag
	in $\zeta(C_0)$ is opposite every flag in $\zeta(P_0)$. By openness of
	opposition, we may choose compact neighborhoods $A,B_p,B_q$ such that
	$\zeta(C_0)\subset\operatorname{Int}(A)$,
	$\zeta(p)\in\operatorname{Int}(B_p)$, and
	$\zeta(q)\in\operatorname{Int}(B_q)$, and, writing
	$B=B_p\cup B_q$, such that $A$ and $B$ are mutually opposite and
	\begin{equation}\label{eq:relative-neighborhoods}
		B_p\subset\operatorname{Opp}(\zeta(q)),
		\qquad
		B_q\subset\operatorname{Opp}(\zeta(p)).
	\end{equation}
	In particular, $B$ is opposite every flag in $\zeta(\partial J)$.
	
	We claim that, after shrinking $B_p$ and $B_q$ around $\zeta(p)$ and
	$\zeta(q)$,
	\begin{equation}\label{eq:J-trapping}
		hB\subset\operatorname{Int}(A)
		\qquad(h\in J\setminus\{1\}).
	\end{equation}
	Suppose otherwise. Choose nested compact neighborhoods
	$B_p^{(r)}\subset B_p$ and $B_q^{(r)}\subset B_q$ with intersections
	$\{\zeta(p)\}$ and $\{\zeta(q)\}$, respectively, and put
	$B^{(r)}=B_p^{(r)}\cup B_q^{(r)}$. Then there exist
	$h_r\in J\setminus\{1\}$ and $z_r\in B^{(r)}$ such that
	$h_rz_r\notin\operatorname{Int}(A)$.
	
	If $(h_r)$ has a constant subsequence, say $h_r=h$, then, after further
	extraction, $z_r\to\zeta(s)$ for some $s\in\{p,q\}$. By
	\eqref{eq:relative-orbits}, $hs\notin\{p,q\}$, so $hs\in S\subset C_0$.
	Hence $h_rz_r\to\zeta(hs)\in\zeta(C_0)\subset\operatorname{Int}(A)$,
	a contradiction.
	
	Otherwise, after passing to a subsequence, $(h_r)$ escapes to infinity.
	Since $J$ is quasiconvex, there exist $x_\pm\in\partial J$ such that
	$h_r\to x_+$ and $h_r^{-1}\to x_-$ in the hyperbolic compactification $F\cup\partial F$.
	By the convergence dynamics recalled above,
	$
	h_r|_{\Opp(\zeta(x_-))}
	\longrightarrow
	\zeta(x_+)
	$
	locally uniformly.
	Since the fixed compact set $B$ is opposite every flag in
	$\zeta(\partial J)$, we have $B\subset\operatorname{Opp}(\zeta(x_-))$.
	Thus $h_rz_r\to\zeta(x_+)\in\zeta(\partial J)\subset
	\operatorname{Int}(A)$, again a contradiction. This proves
	\eqref{eq:J-trapping}.
	
	Rename the resulting smaller neighborhoods as $B_p,B_q$. Since $A$ is
	opposite to both $\zeta(p)$ and $\zeta(q)$, \eqref{eq:relative-neighborhoods}
	gives
	\[
	A\cup B_p\subset\operatorname{Opp}(\zeta(q)),
	\qquad
	A\cup B_q\subset\operatorname{Opp}(\zeta(p)).
	\]
	Hence, by \eqref{eq:relative-plus}--\eqref{eq:relative-minus}, for all
	sufficiently large $N$,
	\[
	g_N(A\cup B_p)\subset\operatorname{Int}(B_p),
	\qquad
	g_N^{-1}(A\cup B_q)\subset\operatorname{Int}(B_q).
	\]
	It follows that $g_N^kA\subset\operatorname{Int}(B)$ for every
	$k\in\mathbb Z\setminus\{0\}$. If $g_N^k=1$ for some $k\neq0$, then
	$A=g_N^kA\subset\operatorname{Int}(B)$, contradicting
	$A\cap B=\varnothing$. Thus $g_N$ has infinite order.
	
	Every nontrivial element of $\langle g_N\rangle$ sends $A$ into
	$\operatorname{Int}(B)$, while every nontrivial element of $J$ sends
	$B$ into $\operatorname{Int}(A)$. The subgroup ping-pong lemma therefore
	gives
	\[
	\langle J,g_N\rangle\cong J*\langle g_N\rangle.
	\]
\end{proof}

\section{Nielsen rays and the admissibility criterion}\label{sec:nielsen-criterion}

We now construct a family of generating pairs for the same group
while keeping the commutator fixed. Under suitable flag-separation
conditions, this construction produces an Anosov free subgroup.

Let $\Gamma=\langle p,q\rangle<\SL_n(\R)$ and set
$z=pq$ and $c=pqp^{-1}q^{-1}$.
For $L\geq1$, define $a_L=z^Lp$ and $b_L=z^Lpz^{-1}$.
Then $b_L^{-1}a_L=z$, $p=z^{-L}a_L$, and $q=p^{-1}z$,
so $\langle a_L,b_L\rangle=\Gamma$.  Moreover,
\[
b_L^{-1}a_L^{-1}b_La_L
=zp^{-1}z^{-1}p
=pqp^{-1}q^{-1}=c.
\]

For convenience, we introduce the following terminology. We say that
$(p,q)$ satisfies the $\cX$-\emph{admissibility conditions} if the
following conditions hold:
\begin{enumerate}
	\item $z$ and $c$ are biproximal on $\cX$;
	\item writing $P=z^+$ and $Q=p^{-1}z^-$, the flags $P$ and $Q$
	are opposite, and each of $P,Q$ is opposite to $c^+$ and $c^-$;
	\item for every $k\in\Z\setminus\{0\}$ and every
	$R_0,S_0\in\{P,Q\}$, the flags $R_0$ and $c^kS_0$ are opposite.
\end{enumerate}
A pair satisfying these conditions is called
$\cX$-\emph{admissible}.

\begin{lemma}\label{lem:cyclic-domains-general}
Assume that $(p,q)$ satisfies the $\cX$-admissibility conditions.  Then there exist compact sets $A,B_P,B_Q\subset\cX$, each with nonempty interior, such that, with $B=B_P\cup B_Q$,
\begin{enumerate}[label=\textup{(\roman*)}]
 \item $P\in\Int(B_P)$ and $Q\in\Int(B_Q)$;
 \item $A$ and $B$ are mutually opposite, and $B_P$ and $B_Q$ are mutually opposite;
 \item $c^kB\subset\Int(A)$ for every $k\in\Z\setminus\{0\}$.
\end{enumerate}
\end{lemma}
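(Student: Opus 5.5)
The plan is to run the same compactness scheme as in the proof of \cref{prop:relative-pingpong}, with $J$ replaced by the cyclic group $\langle c\rangle$. The boundary of $\langle c\rangle$ is replaced by the two flags $c^{\pm}$. Set
\[
C_0=\{c^+,c^-\}\cup\{c^kS_0: k\in\Z\setminus\{0\},\ S_0\in\{P,Q\}\}.
\]

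\textbf{Step 1: $C_0$ is compact and mutually opposite to $\{P,Q\}$.} Since $c$ is biproximal and, by condition~(2), $P,Q\in\Opp(c^-)\cap\Opp(c^+)$, we have $c^kS_0\to c^+$ as $k\to+\infty$ and $c^kS_0\to c^-$ as $k\to-\infty$. Hence the only accumulation points of $C_0$ are $c^\pm$, which already lie in $C_0$, so $C_0$ is compact. Every element of $C_0$ is opposite to $P$ and to $Q$: for $c^\pm$ this is condition~(2), and for $c^kS_0$ it is condition~(3). Openness of opposition, together with compactness, then gives compact neighborhoods $A\supset C_0$ (with $C_0\subset\Int(A)$), $B_P\ni P$ and $B_Q\ni Q$ such that $A$ is opposite to $B=B_P\cup B_Q$. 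Since $P\operatorname{opp}Q$ by condition~(2), we can also arrange that $B_P$ and $B_Q$ are mutually opposite. This gives (i) and (ii).

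\textbf{Step 2: shrink $B_P,B_Q$ to obtain (iii).} Keep $A$ fixed. Argue by contradiction as in \cref{prop:relative-pingpong}. Take nested neighborhoods $B^{(r)}$ shrinking to $\{P,Q\}$, and suppose there are $k_r\neq0$ and $z_r\in B^{(r)}$ with $c^{k_r}z_r\notin\Int(A)$.

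If $k_r$ is bounded, pass to a constant subsequence $k_r=k$ with $z_r\to S_0\in\{P,Q\}$. Then $c^kz_r\to c^kS_0\in C_0\subset\Int(A)$, a contradiction.

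If $k_r\to+\infty$ along a subsequence, use the fixed compact set $B\subset\Opp(c^-)$. This inclusion holds because $c^-\in C_0\subset A$ and $A$ is opposite to $B$. Locally uniform convergence $c^k|_{\Opp(c^-)}\to c^+$ then gives $c^{k_r}z_r\to c^+\in\Int(A)$, a contradiction. The case $k_r\to-\infty$ is symmetric, using $B\subset\Opp(c^+)$ and convergence to $c^-$. Shrinking $B_P,B_Q$ preserves (i) and (ii), and each set keeps nonempty interior.

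\textbf{Main obstacle.} The one point needing care is uniform convergence on the whole of $B$. This is guaranteed because $B$ is chosen inside $\Opp(c^\pm)$ before shrinking, which follows since $c^\pm$ lie in $\Int(A)$ and $A$ is opposite to $B$. For the Grassmannian case, the locally uniform dynamics of $c^{\pm k}$ on $\Opp(c^\mp)$ is exactly the one established for biproximal elements in the proof of \cref{lem:regular-ray-general} (applied with $u=1$), so the argument works uniformly for both choices of $\cX$.
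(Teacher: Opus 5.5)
Your proposal is correct and follows essentially the paper's argument: the same compact set $\mathcal C_0=\{c^kP,c^kQ:k\neq0\}\cup\{c^+,c^-\}$, the same choice of compact neighborhoods via openness of opposition, and the same two regimes for $k$. Large $|k|$ is handled by north--south dynamics, using that the fixed compact set $B$ lies in $\Opp(c^+)\cap\Opp(c^-)$, and the finitely many remaining $k$ by continuity; you just package these as a contradiction argument along shrinking neighborhoods, in the style of \cref{prop:relative-pingpong}.
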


\begin{proof}
	By north--south dynamics, $c^kP,c^kQ\to c^+$ as $k\to+\infty$, and
	$c^kP,c^kQ\to c^-$ as $k\to-\infty$. Hence
	\[
	\mathcal C_0=
	\{c^kP,c^kQ:k\in\Z\setminus\{0\}\}\cup\{c^+,c^-\}
	\]
	is compact and, by admissibility, mutually opposite to $\{P,Q\}$.
	Since the opposition relation is open, and since $\mathcal C_0$ is compact and mutually opposite to $\{P,Q\}$, we may choose sufficiently small neighborhoods of $\mathcal C_0$, $P$, and $Q$ so that all the required opposition relations persist. Moreover, since $P,Q\in\Opp(c^+)\cap\Opp(c^-)$, the neighborhoods of $P$ and $Q$ may be chosen inside $\Opp(c^+)\cap\Opp(c^-)$. Taking compact neighborhoods inside these open sets, choose compact sets $A,B_P,B_Q$ such that
	$\mathcal C_0\subset\Int(A)$, $P\in\Int(B_P)$, $Q\in\Int(B_Q)$, conditions \textup{(i)}--\textup{(ii)} hold, and we have $B\subset\Opp(c^+)\cap\Opp(c^-)$.

	By north--south dynamics,
	$c^kB\subset\Int(A)$ for all sufficiently large $|k|$.
	For each of the remaining finitely many $k\neq0$, we have
	$c^kP,c^kQ\in\mathcal C_0\subset\Int(A)$; hence, by continuity,
	shrinking $B_P$ and $B_Q$ if necessary gives
	$c^kB\subset\Int(A)$ for all such $k$ as well. The shrinking preserves
	the opposition properties. This proves \textup{(iii)}.
\end{proof}

\begin{proposition}\label{prop:admissible-criterion}
	
Assume that $(p,q)$ satisfies the $\cX$-admissibility conditions. Then there exists $L_0$ such that, for every $L\geq L_0$, the group
$F_L=\langle a_L,c\rangle$
is free, freely generated by $a_L,c$, and is Anosov for the flag type of $\cX$.	
	
\end{proposition}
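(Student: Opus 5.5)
We apply the Dey--Kapovich combination theorem (\cref{thm:DK}) with $\Gamma_A=\langle c\rangle$ and $\Gamma_B=\langle a_L\rangle$, using the compact sets $A$ and $B=B_P\cup B_Q$ from \cref{lem:cyclic-domains-general}. By admissibility, $c$ is biproximal on $\cX$, so $\langle c\rangle$ is $\cX$-Anosov. Part (iii) of \cref{lem:cyclic-domains-general} gives $c^kB\subset\Int(A)$ for all $k\neq0$, which is the required condition on $\Gamma_A$. The mutual opposition of $A$ and $B$ is part (ii). It therefore remains to show that, for $L$ large, $a_L$ is biproximal on $\cX$ (so $\langle a_L\rangle$ is $\cX$-Anosov and infinite cyclic) and that $a_L^kA\subset\Int(B)$ for every $k\neq0$.

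For the dynamics of $a_L=z^Lp$, we invoke \cref{lem:regular-ray-general} with $u=p$. Then $P=z^+$ and $Q=p^{-1}z^-$ are exactly the flags of the admissibility conditions. As $L\to\infty$, $a_L|_{\Opp(Q)}\to P$ and $a_L^{-1}|_{\Opp(P)}\to Q$ locally uniformly. Since $P\operatorname{opp}Q$, the lemma also gives biproximality of $a_L$ for $L$ large.

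The ping-pong step is the one needing care, because the convergence holds for fixed $L\to\infty$, not for powers $a_L^k$. We proceed as in the proof of \cref{prop:relative-pingpong}. First check that $A\cup B_P\subset\Opp(Q)$ and $A\cup B_Q\subset\Opp(P)$. Here $A$ is opposite to $P,Q$ by (ii), $B_P$ is opposite to $B_Q\ni Q$ by (ii), and likewise $B_Q$ is opposite to $P$. These are compact subsets, so locally uniform convergence gives, for $L\geq L_0$,
\[
a_L(A\cup B_P)\subset\Int(B_P),\qquad a_L^{-1}(A\cup B_Q)\subset\Int(B_Q).
\]
Induction then yields $a_L^kA\subset\Int(B_P)$ for $k>0$ and $a_L^{k}A\subset\Int(B_Q)$ for $k<0$. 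Hence $a_L^kA\subset\Int(B)$ for all $k\neq0$.

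\cref{thm:DK} now gives $\langle c,a_L\rangle\cong\langle c\rangle*\langle a_L\rangle$, and this group is $\cX$-Anosov. Both factors are infinite cyclic: $c$ and $a_L$ are biproximal, or alternatively infinite order follows from the ping-pong inclusions together with $A\cap B=\varnothing$. So $F_L$ is free on $a_L,c$. The only subtle point is verifying the opposition inclusions above so that the limits apply on all of $A\cup B_P$ and $A\cup B_Q$, and this is exactly what parts (i)--(ii) of \cref{lem:cyclic-domains-general} were designed to supply.
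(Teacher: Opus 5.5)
Your proposal is correct and follows essentially the same route as the paper: the paper also takes the domains from \cref{lem:cyclic-domains-general}, derives $A\cup B_P\subset\Opp(Q)$ and $A\cup B_Q\subset\Opp(P)$, and uses \cref{lem:regular-ray-general} to get $a_L(A\cup B_P)\subset\Int(B_P)$ and $a_L^{-1}(A\cup B_Q)\subset\Int(B_Q)$. It then concludes $a_L^kA\subset\Int(B)$ and applies \cref{thm:DK} with $\Gamma_A=\langle c\rangle$ and $\Gamma_B=\langle a_L\rangle$. Your explicit induction for the powers $a_L^k$ fills in a step the paper only states.
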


\begin{proof}
Fix the domains from \cref{lem:cyclic-domains-general}.  Since $A$ is opposite to $B$ and $B_P$ is opposite to $B_Q$, we have $A\cup B_P\subset\Opp(Q)$, $A\cup B_Q\subset\Opp(P)$.
By \cref{lem:regular-ray-general}, $a_L=z^Lp$ is biproximal for all large $L$ and
$a_L|_{\Opp(Q)}\longrightarrow P$, $a_L^{-1}|_{\Opp(P)}\longrightarrow Q$.
Thus there exists $L_0$ such that, for every $L\geq L_0$,
\[
a_L(A\cup B_P)\subset\Int(B_P),
\qquad
a_L^{-1}(A\cup B_Q)\subset\Int(B_Q).
\] It follows that
$a_L^kA\subset\Int(B)$ for every $k\neq0$.
By admissibility, $c$ is biproximal, while $a_L$ is biproximal
by \cref{lem:regular-ray-general}. Hence the cyclic subgroups
$\langle c\rangle$ and $\langle a_L\rangle$ are $\cX$-Anosov.
Together with $c^kB\subset\Int(A)$ for every $k\neq0$, all the hypotheses of \cref{thm:DK} are satisfied, with
$\Gamma_A=\langle c\rangle$ and
$\Gamma_B=\langle a_L\rangle$. Therefore
\[
\langle a_L,c\rangle
\cong\langle a_L\rangle*\langle c\rangle
\cong\Z*\Z,
\]
and $\langle a_L,c\rangle$ is $\cX$-Anosov.

\end{proof}

\section{An explicit admissible pair in $\SL_n(\Z)$}\label{sec:jordan-family}

Fix $n\geq3$, and put
\[
 U=I_n+\sum_{i=1}^{n-1}E_{i,i+1},
 \qquad
 V=U^{\transp},
 \qquad
 Z=UV.
\]
By Gow and Tamburini \cite{GowTamburini1993}, if $n\neq4$, then
$\langle U,V\rangle=\SL_n(\Z)$.
We verify the $\cF_{1,n-1}$-admissibility conditions for $(U,V)$,
so that \cref{prop:admissible-criterion} applies to this pair.

\subsection{The spectra of $Z$ and $C$}

Set $\vartheta=\frac{\pi}{2n+1}$ and put
\begin{equation}\label{eq:x-y-vectors}
	x_i=\sin(2i\vartheta),
	\qquad
	y_i=(-1)^{i+1}\sin(i\vartheta)
	\qquad(1\leq i\leq n).
\end{equation}

\begin{lemma}\label{lem:Z-spectral-data}
	The matrix $Z=UV$ is biproximal on $\cF_{1,n-1}$. Its attracting
	and repelling flags are
	\begin{equation}\label{eq:P-R-general}
		P=[x,y^{\transp}],
		\qquad
		R=[y,x^{\transp}].
	\end{equation}
\end{lemma}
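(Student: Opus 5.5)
The plan is to use the fact that $Z=UV=UU^{\transp}$ is symmetric positive definite, and to diagonalize it explicitly. Write $U=I_n+N$ with $N=\sum_{i=1}^{n-1}E_{i,i+1}$. Then
\[
Z=(I_n+N)(I_n+N^{\transp})=I_n+N+N^{\transp}+NN^{\transp},
\qquad NN^{\transp}=\operatorname{diag}(1,\dots,1,0).
\]
So $Z$ is the tridiagonal matrix with diagonal $(2,\dots,2,1)$ and all off-diagonal entries equal to $1$.

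Next I solve the eigenvalue problem with the usual trigonometric ansatz. Put $v_i=\sin(i\varphi)$ and $\mu=2+2\cos\varphi$.
\begin{itemize}
\item Rows $1,\dots,n-1$ read $v_{i-1}+2v_i+v_{i+1}=\mu v_i$, with the convention $v_0=0$. These hold automatically for this ansatz.
\item The last row reads $v_{n-1}+v_n=\mu v_n$. This is equivalent to the boundary condition $v_{n+1}=-v_n$, that is,
\[
\sin((n+1)\varphi)+\sin(n\varphi)=2\sin\bigl(\tfrac{(2n+1)\varphi}{2}\bigr)\cos\bigl(\tfrac{\varphi}{2}\bigr)=0.
\]
\item This yields $\varphi_k=2k\vartheta$ for $k=1,\dots,n$, with $\vartheta=\pi/(2n+1)$.
\end{itemize}
The corresponding eigenvalues $2+2\cos(2k\vartheta)$ are $n$ pairwise distinct numbers, since $\cos$ is injective on $(0,\pi)$. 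Hence they constitute the whole spectrum, and every eigenvalue is simple.

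The largest eigenvalue occurs at $k=1$, with eigenvector $x_i=\sin(2i\vartheta)$. The smallest occurs at $k=n$, where $\varphi_n=\pi-\vartheta$. There the eigenvector is $\sin(i(\pi-\vartheta))=(-1)^{i+1}\sin(i\vartheta)=y_i$. Thus $Z$ is proximal with attracting line $[x]$. Since $Z^{-1}$ has the same eigenvectors with inverted eigenvalues, $Z^{-1}$ is proximal with attracting line $[y]$. So $Z$ is biproximal on $\cF_{1,n-1}$.

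To identify the flags, recall that the dual action is $\theta\mapsto\theta\circ Z^{-1}$, i.e.\ the transpose $(Z^{-1})^{\transp}=Z^{-1}$ acting on row vectors. The attracting covector of $Z$ is therefore the top eigenvector of $Z^{-1}$, namely $y^{\transp}$. The attracting covector of $Z^{-1}$ is the top eigenvector of $Z$, namely $x^{\transp}$. This gives $P=Z^+=[x,y^{\transp}]$ and $R=Z^-=[y,x^{\transp}]$.

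These are genuine flags, i.e.\ $y^{\transp}x=0$, because eigenvectors of a symmetric matrix for distinct eigenvalues are orthogonal. The same argument also shows that $P$ and $R$ are opposite, since $x^{\transp}x\neq0$ and $y^{\transp}y\neq0$.

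The only step requiring care is the boundary condition at row $n$: the modified diagonal entry $1$ is exactly what produces the angle $\pi/(2n+1)$. After that, the remainder is bookkeeping.
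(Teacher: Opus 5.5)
Your proof is correct and follows the same route as the paper. You identify $Z=UU^{\transp}$ as the symmetric tridiagonal matrix with diagonal $(2,\dots,2,1)$, diagonalize it explicitly with eigenvalues $2+2\cos(2k\vartheta)$ and sine eigenvectors, and read off $P=[x,y^{\transp}]$ and $R=[y,x^{\transp}]$ from the extreme eigenvectors using symmetry. Deriving the spectrum from the boundary condition $v_{n+1}=-v_n$, and using positive definiteness to pass from distinct eigenvalues to proximality, fills in details the paper states without proof.
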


\begin{proof}
	The matrix $Z$ is the symmetric tridiagonal matrix with diagonal
	entries $2,\ldots,2,1$ and with $1$ on the two adjacent diagonals.
	Its eigenvalues are
    \[
		\alpha_j=2+2\cos(2j\vartheta)=4\cos^2(j\vartheta),
		\qquad 1\leq j\leq n,
	\]
	with corresponding eigenvectors
	$\bigl(\sin(2ij\vartheta)\bigr)_{i=1}^n$.
	The eigenvalues are positive and strictly decreasing. Hence both
	$Z$ and $Z^{-1}$ are proximal, so $Z$ is biproximal on
	$\cF_{1,n-1}$.
	
	The vectors $x$ and $y$ are eigenvectors for the largest and smallest
	eigenvalues, respectively. Since $Z$ is symmetric, the corresponding
	attracting and repelling flags are given by \eqref{eq:P-R-general}.
\end{proof}

 With $P$ and $R$ as above, set
 \begin{equation}\label{eq:Q-general}
 	Q=U^{-1}R=[q,h^{\transp}],
 	\qquad
 	q=U^{-1}y,
 	\qquad
 	h^{\transp}=x^{\transp}U.
 \end{equation}

 Let
 \[
 C=UVU^{-1}V^{-1},
 \qquad
 \eps=(-1)^{n-1},
 \]
 and set
 \[
 	\lambda=\frac{n+1+\sqrt{(n+1)^2-4}}{2},
 	\qquad
 	\nu=\lambda^{-1}.
 \]
 
 \begin{lemma}\label{lem:C-spectral-data}
 	The matrix $C$ is biproximal on $\cF_{1,n-1}$.
 \end{lemma}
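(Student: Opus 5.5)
The plan is to show that $C$ is a rank-two perturbation of the identity and to read off its full spectrum from a $2\times2$ matrix. Write $N=U-I_n=\sum_{i=1}^{n-1}E_{i,i+1}$, so that $V=I_n+N^{\transp}$. Then
\[
UV-VU=NN^{\transp}-N^{\transp}N=\operatorname{diag}(1,\dots,1,0)-\operatorname{diag}(0,1,\dots,1)=E_{11}-E_{nn}.
\]
Since $UV=VU+(UV-VU)$ and $C=UV\,U^{-1}V^{-1}=UV(VU)^{-1}$, this gives
\[
C=I_n+(E_{11}-E_{nn})W,\qquad W=U^{-1}V^{-1}=(VU)^{-1}.
\]
So $C-I_n$ has rank at most $2$, and its image lies in $\Span(e_1,e_n)$.

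Next we compute the characteristic polynomial. Write $E_{11}-E_{nn}=DK$, where $D=[e_1,\,-e_n]$ is $n\times2$ and $K=[e_1,\,e_n]^{\transp}$ is $2\times n$. The identity $\det(sI_n-DKW)=s^{n-2}\det(sI_2-KWD)$ then gives
\[
\det(tI_n-C)=(t-1)^{n-2}\det\bigl((t-1)I_2-M\bigr),\qquad M=\begin{pmatrix}W_{11}&-W_{1n}\\ W_{n1}&-W_{nn}\end{pmatrix}.
\]
The entries of $U^{-1}=\sum_k(-N)^k$ are $(U^{-1})_{ik}=(-1)^{k-i}$ for $k\ge i$, and $V^{-1}=(U^{-1})^{\transp}$. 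Hence
\[
W_{ij}=\sum_{k\ge\max(i,j)}(-1)^{k-i}(-1)^{k-j}=(-1)^{i+j}\bigl(n-\max(i,j)+1\bigr).
\]
In particular $W_{11}=n$, $W_{1n}=W_{n1}=\eps$ and $W_{nn}=1$. Therefore $\operatorname{tr}M=n-1$ and $\det M=-n+\eps^2=-(n-1)$. Substituting $s=t-1$ into $s^2-(n-1)s-(n-1)$ gives
\[
\det(tI_n-C)=(t-1)^{n-2}\bigl(t^2-(n+1)t+1\bigr).
\]

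The spectrum of $C$ is therefore $\lambda,1,\dots,1,\nu$, with $\lambda\nu=1$. Since $n\ge3$, the discriminant $(n+1)^2-4$ is positive, so $\lambda>1>\nu>0$ are real and distinct from $1$. Thus $\lambda$ is a simple eigenvalue of strictly maximal modulus, so $C$ is proximal. Similarly $\nu^{-1}=\lambda$ is the simple eigenvalue of maximal modulus of $C^{-1}$, so $C^{-1}$ is proximal. The eigenvalue $1$ may carry a nontrivial Jordan block, but this does not affect proximality because it is not of maximal modulus. Hence $C$ is biproximal on $\cF_{1,n-1}$.

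The only step that needs care is the bookkeeping for the entries of $W$ and the signs in $M$. The sign $\eps$ enters only through $\eps^2=1$, so the final polynomial is insensitive to it. It is still worth recording $C^{\pm}$ explicitly at this stage, for use in later admissibility checks. The attracting line of $C$ lies in $\Span(e_1,e_n)$, given by the $\lambda$-eigenvector of $M$ transported by $D$. The attracting hyperplane of $C^{-1}$ is the kernel of the corresponding left eigenvector, which factors through $KW$.
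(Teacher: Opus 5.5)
Your proof is correct and follows essentially the same route as the paper. Both arguments write $C$ as the rank-two perturbation $I_n+e_1r^{\transp}+e_ns^{\transp}$ of the identity, where the paper's $r^{\transp}$ and $s^{\transp}$ are exactly the first row of your $W$ and minus its last row. Both then reduce to a $2\times2$ matrix and obtain $\chi_C(T)=(T-1)^{n-2}\bigl(T^2-(n+1)T+1\bigr)$. The differences are only technical: you derive the rank-two form from $UV-VU=E_{11}-E_{nn}$ instead of by direct multiplication, and you use Sylvester's determinant identity where the paper uses the $C$-invariant splitting $\R^n=K\oplus E$ with $C|_K=I_K$, a splitting the paper reuses in its later coefficient computations.
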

 
 \begin{proof}
 	Since $(U^{-1})_{ij}=(-1)^{j-i}$ for $j\geq i$ and is zero
 	otherwise, direct multiplication gives
 	\begin{equation}\label{eq:C-rank-two}
 		C=I_n+e_1r^{\transp}+e_ns^{\transp},
 	\end{equation}
 	where
 	\[
 		r_j=(-1)^{j-1}(n+1-j),
 		\qquad
 		s_j=(-1)^{n-j+1}.
 	\]
 	
 	Put $E=\Span(e_1,e_n)$. Then $E$ is $C$-invariant, and the matrix
 	of the restriction $C|_E$ with respect to the basis $(e_1,e_n)$ is
 	\[
 		B=
 		\begin{pmatrix}
 			n+1 & \eps\\
 			-\eps & 0
 		\end{pmatrix}.
 	\]
 	
 	Now set $K=\ker r^{\transp}\cap\ker s^{\transp}$. By
 	\eqref{eq:C-rank-two}, $C|_K=I_K$. Moreover, the map
 	$E\to\mathbb R^2$, $v\mapsto(r^{\transp}v,s^{\transp}v)$, is
 	invertible. Hence $K\cap E=\{0\}$ and $\dim K=n-2$, so
 	$\mathbb R^n=K\oplus E$. Therefore
 	\[
 		\chi_C(T)=(T-1)^{n-2}\bigl(T^2-(n+1)T+1\bigr).
 	\]
 	
 	Thus the eigenvalues of $C$ are $\lambda,1,\ldots,1,\nu$, where $1$
 	has multiplicity $n-2$ and $\lambda>1>\nu>0$. In particular, $C$
 	has a unique eigenvalue of maximal modulus and a unique eigenvalue of
 	minimal modulus. Hence both $C$ and $C^{-1}$ are proximal, so $C$ is
 	biproximal on $\cF_{1,n-1}$.
 \end{proof}

\subsection{The opposition conditions}

It remains to verify admissibility conditions (2)--(3).
We reduce these opposition conditions to the nonvanishing of certain
scalar pairings, which we verify directly.

For $R_0,S_0\in\{P,Q\}$, use the representatives
$(v_P,\theta_P)=(x,y^{\transp})$ and
$(v_Q,\theta_Q)=(q,h^{\transp})$ from
\eqref{eq:P-R-general} and \eqref{eq:Q-general}, and define
\[
	s_{R_0S_0}(k)=\theta_{R_0}C^kv_{S_0},
	\qquad k\in\Z.
\]
Put
$c_0=\cos\vartheta$,
$A_j=(-1)^{j-1}\sin(2j\vartheta)$, and
$S_i=\sum_{j=i}^n\sin(j\vartheta)$.

\begin{lemma}\label{lem:extremal-coefficients}
For any covector $\theta$ and vector $v$, write
\[
 \theta C^kv=A_\theta(v)\lambda^k+B_\theta(v)+D_\theta(v)\nu^k.
\]
For $\mu\in\{\lambda,\nu\}$, the coefficient of $\mu^k$ is
\[
 K_\mu F_\theta(\mu)G_v(\mu),
 \qquad
 K_\mu=\frac{\mu^2}{(\mu-1)^2(\mu+1)}>0,
\]
where
\[
 F_\theta(\mu)=\theta_1-\eps\mu^{-1}\theta_n,
 \qquad
 G_v(\mu)=r^{\transp}v+\eps\mu^{-1}s^{\transp}v.
\]
\end{lemma}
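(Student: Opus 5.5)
The plan is to use the rank-two structure \eqref{eq:C-rank-two} together with the decomposition $\R^n=K\oplus E$ from the proof of \cref{lem:C-spectral-data}, and to write down the spectral projections of $C$ explicitly. The eigenvalues $\lambda,\nu$ are simple and $C$ acts trivially on $K$, so $C$ is diagonalizable. Hence
\[
C^k=\Pi_1+\lambda^k\Pi_\lambda+\nu^k\Pi_\nu ,
\]
where, for $\mu\in\{\lambda,\nu\}$, $\Pi_\mu=u_\mu w_\mu^{\transp}/(w_\mu^{\transp}u_\mu)$ is the rank-one projection built from a right eigenvector $u_\mu$ and a left eigenvector $w_\mu^{\transp}$. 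The coefficient of $\mu^k$ in $\theta C^kv$ is then $(\theta u_\mu)(w_\mu^{\transp}v)/(w_\mu^{\transp}u_\mu)$, and $B_\theta(v)=\theta\Pi_1v$. So it suffices to compute $u_\mu$ and $w_\mu$ and the normalizing pairing.

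\emph{Right eigenvector.} I look for $u_\mu$ inside $E$, using the matrix $B$ of $C|_E$. The second row of $B(a,b)^{\transp}=\mu(a,b)^{\transp}$ reads $-\eps a=\mu b$. So I take $u_\mu=e_1-\eps\mu^{-1}e_n$, which gives $\theta u_\mu=\theta_1-\eps\mu^{-1}\theta_n=F_\theta(\mu)$.

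\emph{Left eigenvector.} Since $C-I=e_1r^{\transp}+e_ns^{\transp}$, the relation $w^{\transp}C=\mu w^{\transp}$ forces
\[
w^{\transp}=(\mu-1)^{-1}\bigl(w_1r^{\transp}+w_ns^{\transp}\bigr).
\]
Restricting to $E$ shows that $(w_1,w_n)$ is a left eigenvector of $B$. The second column of $B$ gives $\eps w_1=\mu w_n$. Normalizing $w_1=1$, we get $w_\mu^{\transp}v=(\mu-1)^{-1}G_v(\mu)$.

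\emph{Normalizing pairing.} It remains to evaluate $w_\mu^{\transp}u_\mu=(\mu-1)^{-1}G_{u_\mu}(\mu)$. Using $r_1=n$, $r_n=\eps$, $s_1=-\eps$ and $s_n=-1$, this gives
\[
G_{u_\mu}(\mu)=n-2\mu^{-1}+\mu^{-2}.
\]
Substituting $n=\mu+\mu^{-1}-1$ (valid because $\mu^2-(n+1)\mu+1=0$), this equals
\[
\frac{\mu^3-\mu^2-\mu+1}{\mu^2}=\frac{(\mu-1)^2(\mu+1)}{\mu^2}.
\]
Therefore
\[
\frac{(\theta u_\mu)(w_\mu^{\transp}v)}{w_\mu^{\transp}u_\mu}
=\frac{F_\theta(\mu)\,(\mu-1)^{-1}G_v(\mu)}{(\mu-1)^{-1}(\mu-1)^2(\mu+1)\mu^{-2}}
=K_\mu F_\theta(\mu)G_v(\mu).
\]
Positivity of $K_\mu$ is immediate from $\mu>0$ and $\mu\neq1$.

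The only real obstacle is bookkeeping. The signs of the entries $r_1,r_n,s_1,s_n$ come from the parities in $\eps$, and a single sign slip changes the factorization of the normalizing quadratic. I will therefore recheck these entries directly against \eqref{eq:C-rank-two}, and confirm the result in a small case such as $n=3$ before relying on it.
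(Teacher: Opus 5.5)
Your proposal is correct and is essentially the paper's argument. Your left eigenvector $w_\mu^{\transp}=(\mu-1)^{-1}\bigl(r^{\transp}+\eps\mu^{-1}s^{\transp}\bigr)$ is exactly the paper's $(\mu-1)^{-1}\ell_\mu=\ell_\mu(B-I)^{-1}$ applied to $(r^{\transp}v,s^{\transp}v)$, and your normalization $w_\mu^{\transp}u_\mu=1-\mu^{-2}$ is the paper's $\ell_\mu u_\mu$. (Since $u_\mu\in E$ and $(w_1,w_n)=(1,\eps\mu^{-1})$, you can read off $w_\mu^{\transp}u_\mu=w_1-\eps\mu^{-1}w_n=1-\mu^{-2}$ directly, which avoids the sign bookkeeping you were worried about.)
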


\begin{proof}
	Let $K=\ker r^{\transp}\cap\ker s^{\transp}$ and
	$E=\Span(e_1,e_n)$. As shown above,
	$\R^n=K\oplus E$, $C|_K=I$, and
	$[C|_E]_{(e_1,e_n)}=B$.
	Hence every $v\in\R^n$ can be written uniquely as
	$v=v_0+ae_1+be_n$, with $v_0\in K$.
	
	Since $r^{\transp}v_0=s^{\transp}v_0=0$, we have
	\[
	\binom{r^{\transp}v}{s^{\transp}v}
	=
	\begin{pmatrix}
		r_1&r_n\\
		s_1&s_n
	\end{pmatrix}
	\binom ab
	=
	(B-I)\binom ab.
	\]
	Since the eigenvalues of $B$ are $\lambda$ and $\nu$, neither of
	which is $1$, the matrix $B-I$ is invertible. Thus
	\[
	\binom ab
	=
	(B-I)^{-1}
	\binom{r^{\transp}v}{s^{\transp}v}.
	\]
	
	Fix $\mu\in\{\lambda,\nu\}$. Since $\lambda\nu=1$, the other
	eigenvalue of $B$ is $\mu^{-1}$. A right $\mu$-eigenvector and a
	left $\mu$-eigenvector of $B$ are
	$u_\mu=(1,-\eps\mu^{-1})^{\transp}$ and
	$\ell_\mu=(1,\eps\mu^{-1})$.
	Moreover, $\ell_\mu u_\mu=1-\mu^{-2}$ and
	$\ell_\mu u_{\mu^{-1}}=0$.
	
	Since $u_\mu$ and $u_{\mu^{-1}}$ form a basis of $\R^2$, write
	$\binom ab=c_\mu u_\mu+c_{\mu^{-1}}u_{\mu^{-1}}$.
	Applying $\ell_\mu$ gives
	\[
	c_\mu
	=
	\frac{\ell_\mu\binom ab}{1-\mu^{-2}}.
	\]
	
	Since $\ell_\mu(B-I)=(\mu-1)\ell_\mu$, we have
	$\ell_\mu(B-I)^{-1}=(\mu-1)^{-1}\ell_\mu$. Therefore
	\[
	\ell_\mu\binom ab
	=
	\frac{r^{\transp}v+\eps\mu^{-1}s^{\transp}v}{\mu-1},
	\]
	and hence
	\[
	c_\mu
	=
	\frac{r^{\transp}v+\eps\mu^{-1}s^{\transp}v}
	{(1-\mu^{-2})(\mu-1)}.
	\]
	
	Since $C|_K=I$ and $C|_E$ is represented by $B$,
	\[
	C^kv
	=
	v_0
	+
	c_\mu\mu^k
	\bigl(e_1-\eps\mu^{-1}e_n\bigr)
	+
	c_{\mu^{-1}}\mu^{-k}
	\bigl(e_1-\eps\mu e_n\bigr).
	\]
	Thus, writing $\theta_i=\theta(e_i)$, the coefficient of $\mu^k$
	in $\theta C^kv$ is
	$c_\mu(\theta_1-\eps\mu^{-1}\theta_n)$.
	
	Finally, since
	$1-\mu^{-2}=(\mu-1)(\mu+1)/\mu^2$, this coefficient is
	\[
	\frac{\mu^2}{(\mu-1)^2(\mu+1)}
	\bigl(\theta_1-\eps\mu^{-1}\theta_n\bigr)
	\bigl(r^{\transp}v+\eps\mu^{-1}s^{\transp}v\bigr),
	\]
	as claimed.
\end{proof}

We now determine the signs of all extremal coefficients.

\begin{lemma}
	\label{lem:extremal-signs}
	For $R_0,S_0\in\{P,Q\}$, write
	\[
	s_{R_0S_0}(k)
	=
	A_{\theta_{R_0}}(v_{S_0})\lambda^k
	+
	B_{\theta_{R_0}}(v_{S_0})
	+
	D_{\theta_{R_0}}(v_{S_0})\nu^k.
	\]
	Then
	$A_{\theta_{R_0}}(v_{S_0})>0$
	for all $R_0,S_0\in\{P,Q\}$. Moreover, the signs of
	$D_{\theta_{R_0}}(v_{S_0})$ are given by
	\[
	\begin{array}{c|cc}
		& S_0=P & S_0=Q\\
		\hline
		R_0=P & \eps & +\\
		R_0=Q & + & \eps
	\end{array}.
	\]
\end{lemma}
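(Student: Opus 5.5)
By \cref{lem:extremal-coefficients}, each extremal coefficient factors as $K_\mu F_{\theta_{R_0}}(\mu)G_{v_{S_0}}(\mu)$ with $K_\mu>0$. The sign question therefore reduces to computing the signs of the four scalars
\[
F_{\theta_P}(\mu),\quad F_{\theta_Q}(\mu),\quad G_{x}(\mu),\quad G_{q}(\mu)
\]
for $\mu\in\{\lambda,\nu\}$. The sign of each coefficient in the table is then the product of a row sign and a column sign. This is consistent with the table's shape: the $D$-table has rank-one sign structure exactly when the diagonal entries are equal (here both are $\eps$) and the off-diagonal entries agree, which they do. We will therefore aim for row signs $(\sigma_P,\sigma_Q)$ and column signs $(\tau_P,\tau_Q)$ with $\sigma_P\tau_P=\eps$, $\sigma_P\tau_Q=+$, $\sigma_Q\tau_P=+$ and $\sigma_Q\tau_Q=\eps$. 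For $\mu=\lambda$ all four products must be positive.

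The first step is to compute the covector data, which only involves $\theta_1,\theta_n$.
\begin{itemize}
\item For $\theta_P=y^{\transp}$ we have $\theta_1=\sin\vartheta>0$ and $\theta_n=(-1)^{n+1}\sin(n\vartheta)=\eps\sin(n\vartheta)$. Hence $F_{\theta_P}(\mu)=\sin\vartheta-\mu^{-1}\sin(n\vartheta)$.
\item For $h^{\transp}=x^{\transp}U$ we get $h_1=x_1$ and $h_n=x_{n-1}+x_n$. Hence $F_{\theta_Q}(\mu)=\sin 2\vartheta-\eps\mu^{-1}\bigl(\sin(2(n-1)\vartheta)+\sin(2n\vartheta)\bigr)$.
\end{itemize}
We then simplify using $(2n+1)\vartheta=\pi$. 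For example, $\sin(n\vartheta)=\sin((n+1)\vartheta)$ and $\sin(2n\vartheta)=\sin\vartheta$, which turns these into explicit trigonometric expressions. For $\mu=\lambda>n$ the $\mu^{-1}$-term is small, so the sign is that of $\theta_1$, which is positive. For $\mu=\nu=\lambda^{-1}$ the $\theta_n$-term dominates, since $\lambda\approx n+1$ is large relative to the ratio of the sines. This gives a sign governed by $-\eps\cdot\operatorname{sign}(\theta_n)$. Here we have to be careful with inequalities such as $\lambda\sin(n\vartheta)>\sin\vartheta$, which hold because $\sin(n\vartheta)\ge\sin\vartheta$ and $\lambda>1$.

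The second step is to compute the vector data. These are
\[
r^{\transp}x=\sum_j(-1)^{j-1}(n+1-j)\sin(2j\vartheta)
\quad\text{and}\quad
s^{\transp}x=\sum_j(-1)^{n-j+1}\sin(2j\vartheta),
\]
and similarly for $q=U^{-1}y$, whose entries are $q_i=\sum_{j\ge i}(-1)^{j-i}y_j=(-1)^{i+1}S_i$. This explains the notation $A_j$ and $S_i$: the products $r_jx_j$ involve $A_j$, and the entries of $q$ involve $S_i$. We evaluate these alternating weighted trigonometric sums in closed form via the standard identities for $\sum_j (-1)^j\sin(2j\vartheta)$ and its derivative-type weighted version, using $c_0=\cos\vartheta$ and $(2n+1)\vartheta=\pi$. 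We expect clean answers such as $r^{\transp}x=\frac{\text{positive}}{4c_0^2}$-type expressions, and $s^{\transp}x=\pm\frac{\sin\vartheta}{2c_0}\cdot(\ldots)$.

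Substituting into $G_v(\mu)=r^{\transp}v+\eps\mu^{-1}s^{\transp}v$ gives the signs: for $\mu=\lambda$ the $r$-term dominates, and for $\mu=\nu$ the $s$-term dominates. The dominance comparisons will use the explicit value $\lambda+\lambda^{-1}=n+1$.

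The main obstacle is the closed-form evaluation of the weighted alternating sums $r^{\transp}x$ and $r^{\transp}q$ (a double sum through the $S_i$), together with the dominance inequalities in the borderline cases. There we cannot rely on crude asymptotics and must compare exact trigonometric quantities with $\lambda$. My first attempt will be to write $\sin$ as an imaginary part of $e^{i\cdot}$ and sum geometric series with $\omega=-e^{2i\vartheta}$. Since $\omega^{2n+1}$-type relations simplify drastically at $\vartheta=\pi/(2n+1)$, the results should collapse to short expressions in $c_0$. An alternative for $q$ is to avoid the double sum entirely by writing $r^{\transp}q=(U^{-\transp}r)^{\transp}y$ and computing $U^{-\transp}r$, which has a simple closed form.
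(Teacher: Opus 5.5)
\textbf{Verdict: the route is the paper's, but one sign step would fail for even $n$.} Like the paper, you factor each extremal coefficient as $K_\mu F_\theta(\mu)G_v(\mu)$, read off row signs from $F$ and column signs from $G$, and evaluate the alternating sine sums with $-e^{2i\vartheta}$. Your $F$-signs are correct, and they force the column signs $\operatorname{sgn}G_x(\nu)=-\eps$ and $\operatorname{sgn}G_q(\nu)=-1$. The gap is in how you propose to obtain $\operatorname{sgn}G_x(\nu)$: your claim that ``for $\mu=\nu$ the $s$-term dominates'' is false for even $n$. Indeed, $r^{\transp}x=\sum_j(n+1-j)A_j=\sin\vartheta\,(2(n+1)c_0-\eps)/(4c_0^2)$ and $\eps\mu^{-1}s^{\transp}x=-\mu^{-1}\sum_jA_j=-\mu^{-1}\sin\vartheta/(2c_0)$. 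Using $n+1-\mu^{-1}=\mu$, this gives
\[
G_x(\mu)=\frac{\sin\vartheta}{4c_0^2}\bigl(2(n+1-\mu^{-1})c_0-\eps\bigr)=\frac{\sin\vartheta}{4c_0^2}\bigl(2\mu c_0-\eps\bigr).
\]
At $\mu=\nu$ the two terms have the same size and cancel to leading order. What survives is $2\nu c_0-\eps$, whose sign $-\eps$ comes entirely from the parity term $-\eps$ in the closed form of $r^{\transp}x$. For $n=4$, for instance, $r^{\transp}x\approx1.007$ while the $s$-term is $\approx-0.872$, so the $r$-term wins. If the $s$-term really dominated, you would get $\operatorname{sgn}G_x(\nu)=-1$ for every $n$. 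Combined with $\operatorname{sgn}F_{\theta_P}(\nu)=-1$ and $\operatorname{sgn}F_{\theta_Q}(\nu)=-\eps$, that gives $D_{\theta_P}(v_P)>0$ and $\operatorname{sgn}D_{\theta_Q}(v_P)=\eps$, which is the wrong table for even $n$. No dominance argument works here; you need the exact identity above.

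\textbf{Other sign steps.} You misjudge the size of terms in $F_{\theta_P}(\lambda)=\cos(\vartheta/2)\bigl(2\sin(\vartheta/2)-\lambda^{-1}\bigr)$. The $\lambda^{-1}$-term is not small compared with $\theta_1=\sin\vartheta$: both are of order $1/n$, and $2\lambda\sin(\vartheta/2)\to\pi/2$. So you need the genuine inequality $2\lambda\sin(\vartheta/2)>1$, which the paper proves from $\lambda>n$, $\sin t>t\cos t$ and a numerical bound on $\cos t$. The inequality you flag as delicate, $\lambda\sin(n\vartheta)>\sin\vartheta$, is the trivial one. Conversely, what you call the main obstacle is not needed. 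Since $r_iq_i=(n+1-i)S_i$ and $s_iq_i=-\eps S_i$ with $S_i>0$, one has $G_q(\mu)=\sum_i(n+1-i-\mu^{-1})S_i$. Every summand has the required sign because $\lambda^{-1}<1\le n+1-i\le n<\lambda$, so no closed form for $r^{\transp}q$ is required.
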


\begin{proof}
	By \cref{lem:extremal-coefficients}, for
	$\mu\in\{\lambda,\nu\}$, the coefficient of $\mu^k$ in
	$\theta C^kv$ is $K_\mu F_\theta(\mu)G_v(\mu)$, where $K_\mu>0$.
	Thus it suffices to determine the signs of the corresponding
	$F$- and $G$-factors.
	
    From \eqref{eq:x-y-vectors},
	$y_1=\sin\vartheta$ and
	$y_n=\eps\cos(\vartheta/2)$. Also, with $x_0=0$,
	$h_j=x_j+x_{j-1}$, so $h_1=\sin(2\vartheta)$ and
	$h_n=2\sin(2\vartheta)c_0$. Hence
	$F_{\theta_P}(\mu)=\sin\vartheta-\mu^{-1}\cos(\vartheta/2)$ and
	$F_{\theta_Q}(\mu)=\sin(2\vartheta)(1-2\eps\mu^{-1}c_0)$.

	Since $\lambda+\lambda^{-1}=n+1$, we have $\lambda>n$. It follows that
	\begin{equation}\label{eq:F-signs}
		F_{\theta_P}(\lambda)>0,
		\quad
		F_{\theta_P}(\nu)<0,
		\quad
		F_{\theta_Q}(\lambda)>0,
		\quad
		\operatorname{sgn}F_{\theta_Q}(\nu)=-\eps.
	\end{equation}
	For the first inequality, it is enough to show
	$2\lambda\sin(\vartheta/2)>1$. Put $t=\vartheta/2$. Since
	$t<1/4$ and $\sin t>t\cos t$, we have
	\[
	2\lambda\sin t
	>
	2nt\cos t
	=
	\frac{n\pi}{2n+1}\cos t
	\geq
	\frac{3\pi}{7}\cos t.
	\]
	Moreover, $\cos t>1-t^2/2>4/5$, and hence
    $(3\pi/7)\cos t > 36/35 > 1$.
	Thus $F_{\theta_P}(\lambda)>0$.
	For the remaining signs, note that $\nu^{-1}=\lambda$.
	Since $2\sin(\vartheta/2)<1<\lambda$, we have
	$F_{\theta_P}(\nu)<0$. Also,
	$F_{\theta_Q}(\lambda)>0$ because $\sin(2\vartheta)>0$ and
	$1-2\eps\lambda^{-1}c_0>0$. Finally, since
	$2\lambda c_0>1$, the factor $1-2\eps\lambda c_0$ has sign
	$-\eps$, and hence
	$\operatorname{sgn}F_{\theta_Q}(\nu)=-\eps$.

	Next consider $G_x$. Recall
	$A_j=(-1)^{j-1}\sin(2j\vartheta)$. By the identities in
	\cref{app:sine-sums},
	\begin{equation}\label{eq:alternating-sums}
		\sum_{j=1}^n A_j=\frac{\sin\vartheta}{2c_0},
		\qquad
		\sum_{j=1}^n(n+1-j)A_j
		=\frac{\sin\vartheta\bigl(2(n+1)c_0-\eps\bigr)}{4c_0^2}.
	\end{equation}
    Since $\lambda+\lambda^{-1}=n+1$, the formula above gives
    \[
    G_x(\lambda)
    =
    \frac{\sin\vartheta}{4c_0^2}(2\lambda c_0-\eps),
    \qquad
    G_x(\nu)
    =
    \frac{\sin\vartheta}{4c_0^2}(2\nu c_0-\eps).
    \]
    As $\lambda>n\geq3$ and $\nu=\lambda^{-1}<1/3$, we obtain
    \begin{equation}\label{eq:G-P-signs}
    	G_x(\lambda)>0,
    	\qquad
    	\operatorname{sgn}G_x(\nu)=-\eps.
    \end{equation}

	Finally, recall that
	$S_i=\sum_{j=i}^n\sin(j\vartheta)$. Since
	$0<j\vartheta<\pi/2$ for $1\leq j\leq n$, we have $S_i>0$.
	The formula for $U^{-1}$ gives $q_i=(-1)^{i+1}S_i$, and hence
	\[
	G_q(\mu)
	=
	\sum_{i=1}^n(n+1-i-\mu^{-1})S_i.
	\]
	For $\mu=\lambda$, every summand is positive since
	$\lambda^{-1}<1\leq n+1-i$. For $\mu=\nu$, every summand is negative
	since $\nu^{-1}=\lambda>n\geq n+1-i$. Hence
	\begin{equation}\label{eq:G-Q-signs}
		G_q(\lambda)>0,
		\qquad
		G_q(\nu)<0.
	\end{equation}
	
	Combining \eqref{eq:F-signs}, \eqref{eq:G-P-signs}, and
	\eqref{eq:G-Q-signs} with $K_\mu>0$, we see that all the
	$\lambda^k$-coefficients are positive, while the signs of the
	$\nu^k$-coefficients are exactly those in the asserted table.

\end{proof}

\begin{lemma}\label{lem:UV-admissibility}
For every $n\geq3$, the following statements hold for the matrices and flags above.
\begin{enumerate}[label=\textup{(\roman*)}]
 \item $s_{PQ}(k)>0$ and $s_{QP}(k)>0$ for every $k\in\Z$.
 \item $s_{PP}(k)\neq0$ and $s_{QQ}(k)\neq0$ for every $k\neq0$.
 \item $P,Q$ are opposite to each other and to both $C^+$ and $C^-$.  Moreover, for every $k\neq0$ and every $R_0,S_0\in\{P,Q\}$, the flags $R_0$ and $C^kS_0$ are opposite.
\end{enumerate}
\end{lemma}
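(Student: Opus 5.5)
The plan is to reduce everything to the three-term expansion of \cref{lem:extremal-signs} together with the values at $k=0$. For fixed $R_0,S_0$, write $s(k)=s_{R_0S_0}(k)=A\lambda^k+B+D\nu^k$ with $A>0$, and eliminate $B$ using $s(0)=A+B+D$. Since $\nu=\lambda^{-1}$, this gives the identity
\[
s(k)-s(0)=(\lambda^{k}-1)\bigl(A-D\lambda^{-k}\bigr)\quad(k\geq1),
\qquad
s(k)-s(0)=(\lambda^{|k|}-1)\bigl(D-A\lambda^{-|k|}\bigr)\quad(k\leq-1).
\]
Everything will follow from this identity, the sign table, the values $s(0)$, and a ratio estimate for $A/D$.

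For (ii), we have $s_{PP}(0)=\theta_P(v_P)=y^{\transp}x=0$ and $s_{QQ}(0)=0$, because $[v,\theta]$ is a flag. If $\eps=-1$, the table gives $D<0<A$, so the two brackets above are positive for $k\geq1$ and negative for $k\leq-1$; hence $s(k)\neq0$ for $k\neq0$. If $\eps=+1$, then $D>0$, and I need the strict bounds $\nu<A/D<\lambda$. Once these hold, the brackets are positive for all $k\geq1$ (respectively all $k\leq-1$), since $\lambda^{-|k|}\leq\nu$.

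For (i), the same identity reduces the claim to two facts. The first is $s(0)>0$, namely $s_{PQ}(0)=y^{\transp}U^{-1}y>0$ and $s_{QP}(0)=x^{\transp}Ux>0$. The second is $\nu\leq A/D\leq\lambda$, where now $D>0$ by the table. The values at $0$ I would compute with the sine-sum identities of \cref{app:sine-sums}: $x^{\transp}Ux=\|x\|^2+\sum_i x_ix_{i+1}$ is a sum of positive terms because all $x_i=\sin(2i\vartheta)>0$, and $y^{\transp}U^{-1}y=\sum_i y_iq_i=\sum_i\sin(i\vartheta)S_i>0$ using $q_i=(-1)^{i+1}S_i$.

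The main obstacle is the ratio bound for $A/D$ in all four cases, strict in the cases needed for (ii). By \cref{lem:extremal-coefficients},
\[
\frac{A}{D}=\frac{K_\lambda}{K_\nu}\cdot\frac{F_\theta(\lambda)\,G_v(\lambda)}{F_\theta(\nu)\,G_v(\nu)},
\qquad
\frac{K_\lambda}{K_\nu}=\frac{\lambda^{2}(\nu-1)^{2}(\nu+1)}{\nu^{2}(\lambda-1)^{2}(\lambda+1)}=\lambda .
\]
So the condition becomes $\lambda^{-2}\leq F_\theta(\lambda)G_v(\lambda)/(F_\theta(\nu)G_v(\nu))\leq1$. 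I would try to verify this directly from the explicit formulas in the proof of \cref{lem:extremal-signs}: each factor has the form $\alpha-\beta\mu^{-1}$, so the ratio at $\lambda$ versus $\nu$ compares $\alpha-\beta\nu$ with $\alpha-\beta\lambda$. I expect these elementary inequalities, using $\lambda>n$ and the bounds on $\vartheta$, to succeed; this step may also need a case split on the parity of $n$.

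For (iii), I would use representatives of $C^\pm$. The line of $C^+$ is spanned by $e_1-\eps\nu e_n$, and its hyperplane is cut out by the left covector $\ell_\lambda$ composed with $(r^{\transp},s^{\transp})$, which is proportional to $v\mapsto G_v(\lambda)$; analogously for $C^-$ with $\nu$. Then, for $R_0\in\{P,Q\}$, opposition of $R_0$ with $C^{\pm}$ amounts to $F_{\theta_{R_0}}(\mu)\neq0$ and $G_{v_{R_0}}(\mu)\neq0$ for $\mu\in\{\lambda,\nu\}$, and these are nonzero by the signs established in the proof of \cref{lem:extremal-signs}. Opposition of $P$ and $Q$ is exactly $s_{PQ}(0)\neq0\neq s_{QP}(0)$. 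Finally, $R_0\operatorname{opp}C^kS_0$ means $\theta_{R_0}C^kv_{S_0}\neq0$ and $\theta_{S_0}C^{-k}v_{R_0}\neq0$, that is, $s_{R_0S_0}(k)\neq0$ and $s_{S_0R_0}(-k)\neq0$, which is given by (i) and (ii).
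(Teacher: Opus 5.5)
There is a genuine gap in part (i).

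Your reduction itself is sound. The identity $s(k)-s(0)=(\lambda^k-1)(A-D\lambda^{-k})$ is essentially what the paper uses. For odd $n$, your strict bounds $\nu<A/D<\lambda$ in (ii) are equivalent to $s(\pm1)>0$. For $s_{PP}$, with $a=2\sin(\vartheta/2)$ and $b=2c_0$, they reduce to $ab(n+1)>a+b$ and $ab<1$, which are exactly the two estimates the paper proves.

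\textbf{The gap.} For $s_{PQ}$ you need $A/D\ge\nu$, but this holds with equality: $A=\nu D$, equivalently $s_{PQ}(1)=s_{PQ}(0)$. Similarly, for $s_{QP}$ one has $A=\lambda D$ exactly. A tight inequality cannot be obtained from $\lambda>n$ and numerical bounds on $\vartheta$, so your expectation that elementary estimates will succeed fails here.

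Concretely, write $W=\sum_iS_i$ and $R_1=\sum_i(n+1-i)S_i$. Then $A/D\ge\nu$ for $s_{PQ}$ is equivalent to $2\sin(\vartheta/2)R_1\ge W$. In fact $2\sin(\vartheta/2)R_1=W$ holds exactly. This is a trigonometric identity that the formula $G_q(\mu)=\sum_i(n+1-i-\mu^{-1})S_i$ does not reveal. For $s_{QP}$, the closed forms of $F_{\theta_Q}$ and $G_x$ do yield $A/D=\lambda$ after exact algebra, but not through estimates.

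\textbf{The missing idea is structural.} We have $C=ZU^{-1}Z^{-1}U$, so $CU^{-1}=ZU^{-1}Z^{-1}$. Since $Z$ is symmetric with $Zy=\alpha_ny$,
\[
s_{PQ}(1)=y^{\transp}ZU^{-1}Z^{-1}y=y^{\transp}U^{-1}y=s_{PQ}(0).
\]
Likewise $C^{-1}=U^{-1}ZUZ^{-1}$ gives $s_{QP}(-1)=x^{\transp}Ux=s_{QP}(0)$. Substituting $A=\nu D$ (respectively $A=\lambda D$) into your identity gives $s(k)\ge s(0)>0$ for every $k$. This is the paper's convexity argument. With that identity in hand, the rest of your plan goes through. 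In particular, for $s_{QQ}$ with $n$ odd, the needed bounds $2c_0<R_1/W<n+1-1/(2c_0)$ are not tight. They follow from viewing $R_1/W$ as a weighted average of $n,\dots,1$ with decreasing positive weights.

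\textbf{A smaller point in (iii).} The hyperplane of $C^+$ is the kernel of the left $\nu$-eigencovector $v\mapsto G_v(\nu)$, not of $v\mapsto G_v(\lambda)$. Your choice does not even contain the line $e_1-\eps\nu e_n$. This is harmless, because you end up requiring all four of $F_{\theta_{R_0}}(\lambda)$, $F_{\theta_{R_0}}(\nu)$, $G_{v_{R_0}}(\lambda)$, $G_{v_{R_0}}(\nu)$ to be nonzero.
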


\begin{proof}
	
	By \cref{lem:extremal-signs}, the coefficients of both $\lambda^k$
	and $\nu^k$ are positive in each of the sequences $s_{PQ}(k)$ and
	$s_{QP}(k)$. Moreover,  $s_{PQ}(0)=y^{\transp}U^{-1}y>0$ and $s_{QP}(0)=x^{\transp}Ux>0$.
	Indeed, after combining the alternating signs in $y$ and $U^{-1}$,
	every summand in $y^{\transp}U^{-1}y$ is positive. Also,
	$
	x^{\transp}Ux
	=
	\sum_{i=1}^n x_i^2+\sum_{i=1}^{n-1}x_ix_{i+1}>0,
	$
	since all coordinates of $x$ are positive.
	
	The identity $C=ZU^{-1}Z^{-1}U$ gives
	$CU^{-1}=ZU^{-1}Z^{-1}$. Since $Z$ is symmetric and $y$ is an
	eigenvector of $Z$,
	\[
	s_{PQ}(1)
	=
	y^{\transp}CU^{-1}y
	=
	y^{\transp}U^{-1}y
	=
	s_{PQ}(0).
	\]
	Similarly, $C^{-1}=U^{-1}ZUZ^{-1}$, and since $Z$ is symmetric and
	$x$ is an eigenvector of $Z$,
	\[
	s_{QP}(-1)
	=
	x^{\transp}UC^{-1}x
	=
	x^{\transp}Ux
	=
	s_{QP}(0).
	\]
	
	Each of the functions $s_{PQ}(t)$ and $s_{QP}(t)$ has the form
	$A\lambda^t+B+D\lambda^{-t}$ on $\R$, with $A,D>0$, and is therefore
	strictly convex. Hence the equality
	$s_{PQ}(0)=s_{PQ}(1)$ implies that its minimum over $\Z$ is attained
	at $0$ and $1$, while
	$s_{QP}(-1)=s_{QP}(0)$ implies that its minimum over $\Z$ is attained
	at $-1$ and $0$. Since these values are positive, we obtain
	$s_{PQ}(k)>0$ and $s_{QP}(k)>0$ for every $k\in\Z$. This proves
	\textup{(i)}. In particular, $P$ and $Q$ are opposite.

	By \cref{lem:extremal-signs}, for both $s_{PP}(k)$ and $s_{QQ}(k)$,
	the coefficient of $\lambda^k$ is positive, while the coefficient of
	$\nu^k$ has sign $\eps$. Suppose first that $n$ is even, so $\eps=-1$. Write
	$
	s(k)=A\lambda^k+B+D\nu^k
	$
	for either $s_{PP}(k)$ or $s_{QQ}(k)$. Then $A>0>D$. Since
	$s(0)=0$ and $\nu=\lambda^{-1}$, we have $B=-A-D$, and hence
	\[
	s(k)=A(\lambda^k-1)+D(\lambda^{-k}-1).
	\]
	If $k>0$, then $\lambda^k-1>0$ and $\lambda^{-k}-1<0$, so both
	terms on the right are positive. Thus $s(k)>0$. Similarly, if
	$k<0$, both terms are negative, and hence $s(k)<0$.

    Assume now that $n$ is odd, so the coefficients of both $\lambda^k$
    and $\nu^k$ in $s_{PP}(k)$ and $s_{QQ}(k)$ are positive. We begin by proving that $s_{PP}(1)>0$.
    Let
    $T_0=\sum_{j=1}^n A_j$ and
    $T_1=\sum_{j=1}^n(n+1-j)A_j$.
    By \eqref{eq:alternating-sums},
    \[
    T_0=\frac{\sin\vartheta}{2c_0},
    \qquad
    T_1=\frac{\sin\vartheta(2(n+1)c_0-1)}{4c_0^2}.
    \]
    Using \eqref{eq:C-rank-two}, we obtain
    \[
    	s_{PP}(1)
    	=\sin\vartheta\,T_1-\cos\frac{\vartheta}{2}\,T_0
    	=\frac{\sin\vartheta\cos(\vartheta/2)}{2c_0^2}
    	\left[
    	\sin\frac{\vartheta}{2}\bigl(2(n+1)c_0-1\bigr)-c_0
    	\right].
    \]

The bracket is positive.  Indeed, put $t=\vartheta/2<1/4$.  Since $\sin t>t\cos t$, $c_0=\cos(2t)>7/8$, $\cos t>31/32$, and $\pi>3$,
\[
 2(n+1)c_0\sin t
 >\frac\pi2c_0\cos t
 >\frac{651}{512}
 >\frac54
 >c_0+\sin t.
\]

 A direct multiplication gives
 \begin{equation}\label{eq:C-inverse-rank-two}
 	C^{-1}=I_n+e_1(r^-)^{\transp}+e_n(s^-)^{\transp},
 	\qquad
 	r^-_j=(-1)^j,
 	\quad
 	s^-_j=(-1)^{n+j}j.
 \end{equation}
 When $n$ is odd, we have
 $(r^-)^{\transp}x=-T_0$ and
 $(s^-)^{\transp}x=\sum_{j=1}^n jA_j$. Moreover,
 \[
 \sum_{j=1}^n jA_j
 =(n+1)T_0-T_1
 =\frac{\sin\vartheta}{4c_0^2}.
 \]
 Using $y^{\transp}x=0$, $y_1=\sin\vartheta$, and
 $y_n=\cos(\vartheta/2)$, we obtain
 \[
 	s_{PP}(-1)
 	=\frac{\sin\vartheta}{4c_0^2}
 	\left(\cos\frac\vartheta2-2c_0\sin\vartheta\right)>0.
 \]
 Indeed,
 $2c_0\sin\vartheta=\sin(2\vartheta)$, while
 $\cos(\vartheta/2)=\sin(\pi/2-\vartheta/2)>\sin(2\vartheta)$
 since $\vartheta<\pi/5$.

  For $s_{QQ}$, put $W=\sum_i S_i$ and
  $R_1=\sum_i(n+1-i)S_i$. The equality $s_{PQ}(1)=s_{PQ}(0)$ proved above is equivalent to
  $\sin\vartheta\,R_1=\cos(\vartheta/2)\,W$, and hence
  $R_1=W/(2\sin(\vartheta/2))$. Using \eqref{eq:C-rank-two}, we obtain
  \[
  	s_{QQ}(1)
  	=\sin(2\vartheta)W
  	\left(\frac1{2\sin(\vartheta/2)}-2c_0\right)>0.
  \]
  Indeed,
  $4c_0\sin(\vartheta/2)<4\sin(\vartheta/2)
  <4(\vartheta/2)<1$.
  
  Next, put $J=\sum_i iS_i$. Since $S_i>0$, we have $J\ge W$.
  Using \eqref{eq:C-inverse-rank-two},
  \[
  	s_{QQ}(-1)=\sin(2\vartheta)(2c_0J-W)>0,
  \]
  because $2c_0>1$.
  
  Thus $s_{PP}(\pm1)>0$ and $s_{QQ}(\pm1)>0$. For either of these
  sequences, write
  $s(k)=A\lambda^k-(A+D)+D\lambda^{-k}$, where $A,D>0$.
  Since
  $s(1)=(\lambda-1)(A-D/\lambda)>0$, we have
  $A>D/\lambda$, and hence, for every $k\ge1$,
  \[
  s(k)=(\lambda^k-1)\left(A-\frac{D}{\lambda^k}\right)>0.
  \]
  Similarly, $s(-1)>0$ implies $s(k)>0$ for every $k\le-1$.
  This proves \textup{(ii)}.

 The opposition criterion \eqref{eq:line-hyperplane-opposition} says that
 $R_0$ is opposite to $C^kS_0$ exactly when
 $
 s_{R_0S_0}(k)s_{S_0R_0}(-k)\neq0.
 $
 Hence the last assertion in \textup{(iii)} follows from
 \textup{(i)}--\textup{(ii)}.
 Finally, for $R_0\in\{P,Q\}$, the nonvanishing of
 $F_{\theta_{R_0}}(\lambda)$ and $G_{v_{R_0}}(\nu)$ shows that
 $R_0$ is opposite to $C^+$, while the nonvanishing of
 $F_{\theta_{R_0}}(\nu)$ and $G_{v_{R_0}}(\lambda)$ shows that
 $R_0$ is opposite to $C^-$. This completes the proof.

\end{proof}

\begin{proposition}\label{prop:jordan-admissible}
For every $n\geq3$, the pair $(U,V)$ is $\cF_{1,n-1}$-admissible.  If $n\neq4$, it is a generating pair of $\SL_n(\Z)$.
\end{proposition}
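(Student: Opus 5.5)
This proposition is an assembly of the preceding lemmas, checked against the three admissibility conditions of \cref{sec:nielsen-criterion} with $p=U$, $q=V$, $z=UV=Z$, and $c=UVU^{-1}V^{-1}=C$. For condition (1), I would cite \cref{lem:Z-spectral-data} for the biproximality of $Z$ and \cref{lem:C-spectral-data} for that of $C$, both on $\cF_{1,n-1}$.

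For condition (2), note that the admissibility flags are $P=z^+$ and $Q=p^{-1}z^-$. By \cref{lem:Z-spectral-data}, $z^+=[x,y^{\transp}]$ and $z^-=R=[y,x^{\transp}]$. The second flag is therefore $U^{-1}R$, which is exactly the flag $Q=[q,h^{\transp}]$ of \eqref{eq:Q-general}. I would check that the representative is correct: the action on flags is $g[v,\theta]=[gv,\theta\circ g^{-1}]$, so $U^{-1}[y,x^{\transp}]=[U^{-1}y,x^{\transp}U]$. Opposition of $P$ and $Q$, and of each of them to $C^{+}$ and $C^{-}$, is then the first sentence of \cref{lem:UV-admissibility}(iii).

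For condition (3), I need $R_0\operatorname{opp}C^kS_0$ for all $k\neq0$ and all $R_0,S_0\in\{P,Q\}$. This is the second sentence of \cref{lem:UV-admissibility}(iii). I would also confirm that the opposition criterion used there matches \eqref{eq:line-hyperplane-opposition}. Since $C^kS_0=[C^kv_{S_0},\theta_{S_0}C^{-k}]$, opposition to $R_0$ amounts to $\theta_{R_0}C^kv_{S_0}\neq0$ and $\theta_{S_0}C^{-k}v_{R_0}\neq0$. These are $s_{R_0S_0}(k)$ and $s_{S_0R_0}(-k)$, which \cref{lem:UV-admissibility}(i)--(ii) show are nonzero. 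This holds for every $n\geq3$, including $n=4$, because none of the earlier computations assumed $n\neq4$.

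Finally, the generation statement for $n\neq4$ is the theorem of Gow and Tamburini \cite{GowTamburini1993} quoted at the start of \cref{sec:jordan-family}. There is no real obstacle: all the analytic work is in \cref{lem:extremal-signs,lem:UV-admissibility}. The only points needing care are the bookkeeping identification of $Q$ with $U^{-1}R$, including the transpose conventions for the covector, and the sign convention for $k$ in the opposition criterion.
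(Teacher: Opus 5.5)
Your proposal is correct and follows the paper's proof exactly. It obtains condition (1) from \cref{lem:Z-spectral-data,lem:C-spectral-data}, conditions (2)--(3) from \cref{lem:UV-admissibility}, and generation for $n\neq4$ from Gow--Tamburini. Your extra checks that $U^{-1}[y,x^{\transp}]=[U^{-1}y,x^{\transp}U]$ and that $R_0\operatorname{opp}C^kS_0$ reduces to $s_{R_0S_0}(k)\,s_{S_0R_0}(-k)\neq0$ are correct, and they make explicit bookkeeping the paper leaves implicit.
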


\begin{proof}
 By \cref{lem:Z-spectral-data,lem:C-spectral-data}, both $Z=UV$ and
 $C$ are biproximal on $\cF_{1,n-1}$. The admissibility conditions
 \textup{(2)}--\textup{(3)} follow from
 \cref{lem:UV-admissibility}, while
$\langle U,V\rangle=\SL_n(\Z)$ for $n\neq4$ by
Gow and Tamburini \cite{GowTamburini1993}.
\end{proof}

\begin{corollary}\label{cor:jordan-free-anosov}
	For every $n\geq3$, there exists $L_0$ such that, for every
	$L\geq L_0$, the group $\langle Z^LU,C\rangle$ is free and
	$\cF_{1,n-1}$-Anosov.
\end{corollary}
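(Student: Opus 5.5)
This is a direct specialization of \cref{prop:admissible-criterion} to the pair $(p,q)=(U,V)$. I would first match the notation of \cref{sec:nielsen-criterion}. With $p=U$ and $q=V$ we get $z=pq=UV=Z$ and $c=pqp^{-1}q^{-1}=UVU^{-1}V^{-1}=C$. For $L\geq1$ this gives $a_L=z^Lp=Z^LU$. The flags from the admissibility conditions are then $P=z^+=Z^+$, which is the flag $[x,y^{\transp}]$ of \cref{lem:Z-spectral-data}, and $Q=p^{-1}z^-=U^{-1}R$, which is the flag of \eqref{eq:Q-general}. So the flags $P,Q$ checked in \cref{lem:UV-admissibility} are exactly the ones the criterion requires, and no re-identification is needed.

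Next I would invoke \cref{prop:jordan-admissible}, which states that $(U,V)$ is $\cF_{1,n-1}$-admissible for every $n\geq3$. This step does not use generation of $\SL_n(\Z)$, so the case $n=4$, where $\langle U,V\rangle$ is a proper subgroup, needs no separate treatment. The corollary only claims freeness and the Anosov property, and \cref{prop:admissible-criterion} never uses that $\langle p,q\rangle$ is all of $\SL_n(\Z)$. It takes $\Gamma=\langle p,q\rangle$ to be an arbitrary subgroup of $\SL_n(\R)$.

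Applying \cref{prop:admissible-criterion} with $\cX=\cF_{1,n-1}$ and $G=\SL_n(\R)$ then yields $L_0$ such that, for every $L\geq L_0$, the group $F_L=\langle a_L,c\rangle=\langle Z^LU,C\rangle$ is freely generated by $Z^LU$ and $C$ and is Anosov for the flag type $\{1,n-1\}$, i.e.\ $\cF_{1,n-1}$-Anosov. That is the assertion of the corollary.

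I do not expect a genuine obstacle, since all the substantive work sits in \cref{prop:jordan-admissible} and \cref{prop:admissible-criterion}. The one point to verify carefully is the bookkeeping that $Q$ is defined as $p^{-1}z^-$ and not, for instance, $q\,z^-$. This is consistent: \cref{lem:regular-ray-general} is applied with $u=p=U$, giving $Q=u^{-1}R=U^{-1}R$, which agrees with \eqref{eq:Q-general}.
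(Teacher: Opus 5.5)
Your proposal is correct and matches the paper's proof, which also derives the corollary directly from \cref{prop:jordan-admissible} and \cref{prop:admissible-criterion}. Your notation check ($z=Z$, $c=C$, $a_L=Z^LU$, $Q=U^{-1}R$) is accurate, and so is your remark that generation of $\SL_n(\Z)$ is never used, so the case $n=4$ is covered.
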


\begin{proof}
	This follows immediately from
	\cref{prop:jordan-admissible,prop:admissible-criterion}.
\end{proof}

\section{The rank-four admissible pair}\label{sec:rank-four}

For $n=4$, the Jordan pair from \cref{sec:jordan-family} does not
generate $\SL_4(\Z)$. We therefore construct a different generating
pair for $\SL_4(\Z)$ and prove that it is
$\Gr_2(\R^4)$-admissible.

  \subsection{An explicit generating pair}
  
  \begin{lemma}\label{lem:rank-four-generating-pair}
  	The matrices
  	\begin{equation}\label{eq:p4-q4}
  		p=
  		\begin{pmatrix}
  			-1&-4&3&0\\
  			-3&-6&6&-1\\
  			0&1&0&0\\
  			-1&-4&4&0
  		\end{pmatrix},
  		\qquad
  		q=
  		\begin{pmatrix}
  			-1&-1&1&0\\
  			0&-1&0&0\\
  			0&0&-1&0\\
  			-1&-1&1&-1
  		\end{pmatrix}
  	\end{equation}
  	generate $\SL_4(\Z)$. Moreover, $z=pq$ is biproximal on
  	$\cG=\Gr_2(\R^4)$.
  \end{lemma}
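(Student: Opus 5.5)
The lemma has two independent parts: generation of $\SL_4(\Z)$, and biproximality of $z=pq$ on $\Gr_2(\R^4)$. I will handle them separately.

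\emph{Generation.} I would reduce to the known generating pair of \cite[Theorem~4.4]{Biswas2026}. First check $\det p=\det q=1$ directly, so $\langle p,q\rangle\subseteq\SL_4(\Z)$. Then exhibit explicit words: if the pair $(p,q)$ is, up to simultaneous conjugation by an element of $\GL_4(\Z)$ and Nielsen moves, the pair in \cite{Biswas2026}, generation follows at once. Otherwise I would write the elementary matrices $E_{12},E_{23},E_{34},E_{41}$ (or any set generating $\SL_4(\Z)$, e.g.\ the elementary transvections $I+E_{ij}$) as words in $p^{\pm1},q^{\pm1}$ and verify the products by direct matrix multiplication. A natural starting point is that $q$ is a signed transvection-like matrix: $q^2$ and commutators such as $[p,q]$, $pqp^{-1}$ should produce unipotents. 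The plan is to find a short word giving a single transvection $I+E_{ij}$, and then use conjugation by $p$ (which permutes coordinate data up to sign) to obtain the others. Since the paper cites \cite{Biswas2026}, I expect $p,q$ to be exactly (a conjugate of) that pair, and I would simply invoke it after checking the conjugating matrix.

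\emph{Biproximality.} Compute $z=pq$ explicitly and its characteristic polynomial $\chi_z(T)$, an integer monic quartic with constant term $1$. The eigenvalues of $\wedge^2 z$ are the pairwise products $\mu_i\mu_j$ ($i<j$) of the eigenvalues $\mu_1,\dots,\mu_4$ of $z$. Biproximality on $\cG$ means the following: order the moduli as $|\mu_1|\ge|\mu_2|\ge|\mu_3|\ge|\mu_4|$. Then $|\mu_1\mu_2|$ must strictly exceed all other products, which amounts to $|\mu_2|>|\mu_3|$, with the top product a simple eigenvalue. Symmetrically, the smallest product $|\mu_3\mu_4|$ must be strictly smallest, which is the same condition, since $(\wedge^2z)^{-1}$ has eigenvalues $(\mu_i\mu_j)^{-1}$. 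So it suffices to show that $\chi_z$ has roots with $|\mu_2|>|\mu_3|$ and that $\mu_1\mu_2$ is real and not repeated. Concretely, I would show $\chi_z$ has four distinct real roots (via sign changes at explicit rational points), or two real roots with $|\mu|>1$ and a complex pair on the other side. In the latter case, $|\mu_1|=|\mu_2|$ for a conjugate pair gives a real product $\mu_1\bar\mu_1$ that is simple provided $|\mu_2|>|\mu_3|$.

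The main obstacle is making the generation step fully rigorous, namely finding explicit words or the precise identification with \cite{Biswas2026}. The spectral part is a finite computation with a single quartic. For it, I would use the criterion directly: compute $\chi_{\wedge^2 z}$, a sextic, or just locate the roots of $\chi_z$ by intermediate values, and conclude that $\wedge^2z$ and its inverse each have a unique simple eigenvalue of maximal modulus.
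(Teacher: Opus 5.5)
\textbf{Verdict: genuine gap in the generation half; the biproximality half is fine.}

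The generation half is not yet a proof. You leave open which route works and defer the decisive step of finding the words. Your first option, Nielsen moves from the pair $u=E_{12}+E_{23}+E_{34}-E_{41}$, $v=\begin{pmatrix}1&1\\-1&0\end{pmatrix}\oplus I_2$ of \cite[Theorem~4.4]{Biswas2026}, is the right one, but it has to be carried out.

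The two structural guesses you actually commit to are false for this pair:
\begin{itemize}
\item $(p,q)$ is not a simultaneous conjugate of $(u,v)$ in either order. Traces are conjugation invariants, and $\operatorname{tr}p=-7$, $\operatorname{tr}q=-4$, whereas $\operatorname{tr}u=0$, $\operatorname{tr}v=3$.
\item $p$ is not a signed permutation matrix; it has entries $-4,3,-6,6$. So your fallback of producing one transvection and moving it around by conjugation with $p$ has no foothold.
\end{itemize}

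What is missing is the explicit Nielsen chain the paper uses. Start from $(X,Y)=(u,v)$ and apply, always using the current values, $X\leftarrow XY$, $Y\leftarrow XY$, $X\leftarrow XY$, $Y\leftarrow XY$, and then $X\leftarrow XY$ three times. This lands exactly on $(p,q)$. In closed form,
\[
q=uv\,uv^{2}\,uv^{2},\qquad p=uv\,uv^{2}\,q^{3},
\]
which is checked by direct multiplication. No conjugation is involved. Since elementary Nielsen moves are invertible, $\langle p,q\rangle=\langle u,v\rangle=\SL_4(\Z)$. Without this certificate, or some other explicit one, your generation claim rests only on the expectation that such words exist.

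Your biproximality argument is the paper's. With the eigenvalues of $z$ ordered by modulus, $\wedge^2z$ and $(\wedge^2z)^{-1}$ are both proximal exactly when $|\mu_2|>|\mu_3|$. The paper checks this by computing
\[
\chi_z(T)=T^4-11T^3-25T^2-11T+1=\Bigl(T^2-\tfrac{11+\sqrt{229}}{2}T+1\Bigr)\Bigl(T^2-\tfrac{11-\sqrt{229}}{2}T+1\Bigr).
\]
Its roots satisfy $\rho>|\sigma|>1>|\sigma|^{-1}>\rho^{-1}$, numerically $\rho\approx13$ and $\sigma\approx-1.29$. Because this $\chi_z$ is reciprocal, your alternative case of two real roots outside the unit circle together with a complex pair cannot occur here.
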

  
  \begin{proof}
  	Let
  	\[
  	u=
  	\begin{pmatrix}
  		0&1&0&0\\
  		0&0&1&0\\
  		0&0&0&1\\
  		-1&0&0&0
  	\end{pmatrix},
  	\qquad
  	v=
  	\begin{pmatrix}
  		1&1&0&0\\
  		-1&0&0&0\\
  		0&0&1&0\\
  		0&0&0&1
  	\end{pmatrix}.
  	\]
  	By \cite[Theorem~4.4]{Biswas2026},
  	$\langle u,v\rangle=\SL_4(\Z)$.
  	Starting from $(X,Y)=(u,v)$, perform the elementary Nielsen moves
  	\[
  	\begin{aligned}
  		X&\leftarrow XY, &Y&\leftarrow XY, &X&\leftarrow XY, &Y&\leftarrow XY,\\
  		X&\leftarrow XY, &X&\leftarrow XY, &X&\leftarrow XY,
  	\end{aligned}
  	\]
  	using the current values at each step. Direct multiplication gives
  	the pair $(p,q)$ in \eqref{eq:p4-q4}. Since elementary Nielsen
  	transformations preserve the generated subgroup,
  	$\langle p,q\rangle=\SL_4(\Z)$.
  	
  	Now
  	\[
  	z=pq=
  	\begin{pmatrix}
  		1&5&-4&0\\
  		4&10&-10&1\\
  		0&-1&0&0\\
  		1&5&-5&0
  	\end{pmatrix},
  	\]
  	and
  	\begin{align}\label{eq:z4-charpoly}
  		\chi_z(T)
  		&=T^4-11T^3-25T^2-11T+1\notag\\
  		&=\left(T^2-\frac{11+\sqrt{229}}2T+1\right)
  		\left(T^2-\frac{11-\sqrt{229}}2T+1\right).
  	\end{align}
  	Let $\rho>1$ and $\sigma<-1$ be the roots outside the unit circle
  	of the first and second quadratic factors, respectively. Their
  	moduli satisfy
  	\[
  	\rho>|\sigma|>1>|\sigma|^{-1}>\rho^{-1}.
  	\]
  	Hence $\wedge^2z$ and $(\wedge^2z)^{-1}$ are proximal, so $z$ is
  	biproximal on $\cG$.
  \end{proof}

  Let $P=z^+$ and $R=z^-$. Thus $P$ is the plane spanned by the
  $\rho$- and $\sigma$-eigenvectors, while $R$ is the plane spanned by
  the $\rho^{-1}$- and $\sigma^{-1}$-eigenvectors. Set
 \[
 	c=pqp^{-1}q^{-1}=
 	\begin{pmatrix}
 		-1&0&-1&1\\
 		-1&1&0&1\\
 		0&0&1&0\\
 		-3&1&-2&2
 	\end{pmatrix}.
 \]
 
 \begin{lemma}\label{lem:c4-biproximal}
 	The matrix $c$ is biproximal on $\cG=\Gr_2(\R^4)$.
 \end{lemma}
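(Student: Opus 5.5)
The plan is to compute the spectrum of $c$ explicitly and then read off the spectrum of $\wedge^2c$. The eigenvalues of $\wedge^2c$ are the products $\mu_i\mu_j$ ($i<j$) of the eigenvalues $\mu_1,\dots,\mu_4$ of $c$, counted with multiplicity. It therefore suffices to show two things: the product of maximal modulus is attained by exactly one pair and is real, and likewise for the product of minimal modulus. This is exactly proximality of $\wedge^2c$ and of $(\wedge^2c)^{-1}$, i.e.\ biproximality on $\cG$ in the sense of the definition. By the discussion after that definition, the corresponding attracting points then automatically lie on the Pl\"ucker quadric.

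The first step is to factor $\chi_c$. The third row of $c$ is $e_3^{\transp}$, so $e_3^{\transp}c=e_3^{\transp}$. Expanding along that row, $\chi_c(T)=(T-1)\chi_{c'}(T)$, where
\[
c'=\begin{pmatrix}-1&0&1\\-1&1&1\\-3&1&2\end{pmatrix}
\]
is obtained by deleting the third row and column. For $c'$ we have:
\begin{itemize}
\item trace $2$;
\item sum of principal $2\times2$ minors $(-1)+1+1=1$;
\item determinant $1$.
\end{itemize}
Hence
\[
\chi_c(T)=(T-1)(T^3-2T^2+T-1).
\]

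The second step is to locate the roots of the cubic $f(T)=T^3-2T^2+T-1$. We have $f(1)=-1<0$ and $f(2)=1>0$, so there is a real root $\alpha\in(1,2)$; numerically $\alpha\approx1.755$. The discriminant of $f$ is negative ($-23$), so the other two roots form a non-real conjugate pair $\beta,\bar\beta$. Since $\alpha|\beta|^2=\det c'=1$, we get $|\beta|=\alpha^{-1/2}<1$.

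The eigenvalues of $\wedge^2c$ then have the following moduli:
\begin{itemize}
\item $\alpha\cdot1=\alpha$;
\item $|\alpha\beta|=|\alpha\bar\beta|=\alpha^{1/2}$;
\item $|\beta|=|\bar\beta|=\alpha^{-1/2}$;
\item $\beta\bar\beta=\alpha^{-1}$.
\end{itemize}
Because $\alpha>1$, these satisfy the strict chain $\alpha>\alpha^{1/2}>\alpha^{-1/2}>\alpha^{-1}$. So $\alpha$ is a simple eigenvalue of maximal modulus of $\wedge^2c$, and $\alpha^{-1}$ is a simple eigenvalue of minimal modulus. Consequently both $\wedge^2c$ and $(\wedge^2c)^{-1}$ are proximal.

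The only step requiring care is confirming that the complex pair is genuinely non-real, which the negative discriminant settles, and that $\alpha>1$. These are routine. I expect the main (minor) obstacle to be simply double-checking the $3\times3$ minor computation, which could be verified by computing $\operatorname{tr}c=3=1+2$ as a consistency check.
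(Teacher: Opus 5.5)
Your proof is correct and follows essentially the paper's argument. Both determine the eigenvalue moduli of $c$ and read off that the eigenvalues of $\wedge^2c$ have moduli $\alpha,\alpha^{1/2},\alpha^{1/2},\alpha^{-1/2},\alpha^{-1/2},\alpha^{-1}$. The only difference is how the spectrum of $c$ is obtained: the paper uses a square root $d$ with $d^2=c$ and $\chi_d(T)=(T-1)(T^3-T-1)$, a matrix it needs later anyway, whereas you compute $\chi_c(T)=(T-1)(T^3-2T^2+T-1)$ directly. The two agree, since the roots of your cubic are the squares of the roots of $T^3-T-1$, so your $\alpha$ is the paper's $\varrho^2$.
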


 \begin{proof}
 	Define
 	\[
 		d=
 		\begin{pmatrix}
 			-1&1&-2&0\\
 			-2&1&-1&1\\
 			0&0&1&0\\
 			-1&2&-2&0
 		\end{pmatrix}.
 	\]
 	Direct multiplication gives
 	\begin{equation}\label{eq:d-square}
 		d^2=c,
 		\qquad
 		\chi_d(T)=(T-1)(T^3-T-1).
 	\end{equation}
 	Let $\varrho>1$ be the real root of $T^3-T-1$, and let
 	$\zeta,\bar\zeta$ be the remaining two roots. Since
 	$\varrho\zeta\bar\zeta=1$, we have
 	$|\zeta|=|\bar\zeta|=\varrho^{-1/2}$. Hence the eigenvalues of
 	$c=d^2$ have moduli
 	\[
 	\varrho^2,\qquad 1,\qquad \varrho^{-1},\qquad \varrho^{-1}.
 	\]
 	Therefore the eigenvalues of $\wedge^2c$ have moduli
 	\[
 	\varrho^2,\qquad
 	\varrho,\qquad
 	\varrho,\qquad
 	\varrho^{-1},\qquad
 	\varrho^{-1},\qquad
 	\varrho^{-2}.
 	\]
 	Thus $\wedge^2c$ has a unique eigenvalue of maximal modulus and a
 	unique eigenvalue of minimal modulus. Hence both $\wedge^2c$ and
 	$(\wedge^2c)^{-1}$ are proximal, so $c$ is biproximal on $\cG$.
 \end{proof}

\subsection{Reduction to a scalar sequence}
The opposition relations involving the translates
$c^kP$ and $c^kQ$ can all be expressed in terms of a single
scalar sequence.

\begin{lemma}\label{lem:rank-four-Phi-reduction}
	Set $Q=p^{-1}R$ and $C=\wedge^2c$. Then there exist nonzero Pl\"ucker representatives
	$\mathbf p$ and $\mathbf q$ of $P$ and $Q$, respectively, and an even
	sequence $(\Phi_j)_{j\in\Z}$ such that, for every $k\in\Z$,
	\begin{align}
		\mathcal B(\mathbf p,C^k\mathbf p)&=\Phi_{2k},
		&
		\mathcal B(\mathbf q,C^k\mathbf q)&=\Phi_{2k},
		\label{eq:self-Phi}\\
		\mathcal B(\mathbf p,C^k\mathbf q)&=-\Phi_{2k-1},
		&
		\mathcal B(\mathbf q,C^k\mathbf p)&=-\Phi_{2k+1}.
		\label{eq:cross-Phi}
	\end{align}
\end{lemma}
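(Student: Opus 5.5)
The indexing of the odd terms suggests that $Q$ is obtained from $P$ by a "half step" of $c$, using the square root $d$ of \cref{lem:c4-biproximal}. Put $D=\wedge^2 d$, so that $D^2=C$ by \eqref{eq:d-square}. I would try to prove
\[
Q=d^{-1}P
\]
in $\cG$. Granting this, fix any nonzero Pl\"ucker representative $\mathbf p$ of $P$. Set $\mathbf q=-D^{-1}\mathbf p$, which is a decomposable representative of $d^{-1}P=Q$, and define
\[
\Phi_j=\mathcal B(\mathbf p,D^j\mathbf p),\qquad j\in\Z.
\]

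The identities then follow from two facts: $\mathcal B$ is symmetric, and it is invariant under $\wedge^2 g$ for $g\in\SL_4(\R)$, in particular under $D$ (note $\det d=1$ from $\chi_d$).
\begin{itemize}
\item Evenness: $\Phi_{-j}=\mathcal B(\mathbf p,D^{-j}\mathbf p)=\mathcal B(D^j\mathbf p,\mathbf p)=\Phi_j$.
\item The $\mathbf p\mathbf p$ identity: $\mathcal B(\mathbf p,C^k\mathbf p)=\mathcal B(\mathbf p,D^{2k}\mathbf p)=\Phi_{2k}$.
\item The $\mathbf q\mathbf q$ identity: $\mathcal B(\mathbf q,C^k\mathbf q)=\mathcal B(D^{-1}\mathbf p,D^{2k-1}\mathbf p)=\mathcal B(\mathbf p,D^{2k}\mathbf p)=\Phi_{2k}$, since the two minus signs cancel.
\item The cross terms: $\mathcal B(\mathbf p,C^k\mathbf q)=-\mathcal B(\mathbf p,D^{2k-1}\mathbf p)=-\Phi_{2k-1}$, and $\mathcal B(\mathbf q,C^k\mathbf p)=-\mathcal B(D^{-1}\mathbf p,D^{2k}\mathbf p)=-\Phi_{2k+1}$.
\end{itemize}
This gives \eqref{eq:self-Phi} and \eqref{eq:cross-Phi}. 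The sign in $\mathbf q$ is chosen exactly to produce the minus signs in \eqref{eq:cross-Phi}.

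The main step is to prove $Q=d^{-1}P$, which is equivalent to $w R=P$ with $w=dp^{-1}$. Recall $P=z^+$ and $R=z^-$. I would look for an explicit matrix identity of the form
\[
w z^{-1} w^{-1}=z
\]
(or possibly $wz^{-1}w^{-1}=\pm z$, or a conjugation by $z$ times a power of $c$), and check it by direct multiplication using the explicit matrices $p,d,z$. If it holds, $w$ conjugates $z^{-1}$ to $z$, and so carries the attracting plane of $\wedge^2 z^{-1}$, namely $R$, to the attracting plane of $\wedge^2 z$, namely $P$.

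If no such clean conjugacy holds, the fallback is to verify $wR=P$ spectrally. I would take the invariant planes explicitly from the factorization \eqref{eq:z4-charpoly}: $R=\ker\bigl(z^2-\tfrac{11+\sqrt{229}}2z+1\bigr)$ restricted appropriately, and more precisely $R$ is spanned by the $\rho^{-1}$- and $\sigma^{-1}$-eigenvectors. I would then check that $w$ maps these eigenvectors into the span of the $\rho$- and $\sigma$-eigenvectors, for instance by showing that $w R$ is $z$-invariant with the right eigenvalues.

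I expect finding and verifying this conjugation relation to be the only real obstacle. Everything after it is the formal bookkeeping above.
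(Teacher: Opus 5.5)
Your proposal is correct and follows the paper's proof essentially verbatim. The paper also sets $s=dp^{-1}$ and checks by direct multiplication that $s^2=I$ and $szs=z^{-1}$. Since $s^2=I$, this is exactly your identity $wz^{-1}w^{-1}=z$; I checked that it holds, so no fallback is needed. From it the paper deduces $sR=P$ and hence $Q=d^{-1}P$, and then uses the same choices $\mathbf q=-D^{-1}\mathbf p$ and $\Phi_j=\mathcal B(\mathbf p,D^j\mathbf p)$, with the same bookkeeping via $C=D^2$, the symmetry of $\mathcal B$, and its $D$-invariance.
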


\begin{proof}
	Set
	\[
	s=dp^{-1}=
	\begin{pmatrix}
		6&0&5&-5\\
		15&-1&15&-10\\
		-1&0&0&1\\
		6&0&6&-5
	\end{pmatrix}.
	\]
	Direct multiplication gives $s^2=I$ and $szs=z^{-1}$. Thus
	$sR=P$. Since $Q=p^{-1}R$ and $s=dp^{-1}$, we obtain
	$dQ=P$, and hence $Q=d^{-1}P$.
	
	Put $D=\wedge^2d$. Since $c=d^2$, we have
	$C=\wedge^2c=D^2$. Choose a nonzero Pl\"ucker representative
	$\mathbf p$ of $P$ and set $\mathbf q=-D^{-1}\mathbf p$. Then
	$\mathbf q$ is a Pl\"ucker representative of $Q$. Define
	$\Phi_j=\mathcal B(\mathbf p,D^j\mathbf p)$ for $j\in\Z$.
	Replacing $\mathbf p$ by $t\mathbf p$, and correspondingly
    $\mathbf q$ by $t\mathbf q$, where $t\in\mathbb R^\times$,
    replaces every $\Phi_j$ by $t^2\Phi_j$. Hence the signs of the
    $\Phi_j$ are independent of the chosen Pl\"ucker representative.
    We henceforth use the normalization specified in
    \cref{app:rank-four-algebra}.

	Since $\mathcal B$ is symmetric and $D$-invariant,
	\[
	\Phi_{-j}
	=\mathcal B(\mathbf p,D^{-j}\mathbf p)
	=\mathcal B(D^j\mathbf p,\mathbf p)
	=\mathcal B(\mathbf p,D^j\mathbf p)
	=\Phi_j.
	\]
	Thus $(\Phi_j)_{j\in\Z}$ is even. Finally,
	\eqref{eq:self-Phi}--\eqref{eq:cross-Phi} follow directly from
	$C=D^2$ and $\mathbf q=-D^{-1}\mathbf p$.
	
\end{proof}

 Since two planes in $\Gr_2(\R^4)$ are opposite if and only if the
 $\mathcal B$-pairing of their Pl\"ucker representatives is nonzero,
 \cref{lem:rank-four-Phi-reduction} reduces the opposition relations
involving $P,Q$ and their $c^k$-translates to the nonvanishing of
$\Phi_j$ for $j\neq0$. Thus it suffices to prove the following stronger statement.

 \begin{proposition}\label{prop:Phi-negative}
 	For the sequence in \cref{lem:rank-four-Phi-reduction}, one has
 	$\Phi_0=0$ and
 	\[
 	\Phi_j<0
 	\qquad
 	(j\in\Z\setminus\{0\}).
 	\]
 \end{proposition}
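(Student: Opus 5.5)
Since $\Phi_j=\mathcal B(\mathbf p,D^j\mathbf p)$ with $D=\wedge^2 d$, the natural route is to diagonalize $D$ and write $\Phi_j$ as an exponential sum. The statement $\Phi_0=0$ is immediate: $\mathbf p$ is decomposable, so $\mathcal B(\mathbf p,\mathbf p)=0$. For $j\neq0$, evenness of $(\Phi_j)$ (\cref{lem:rank-four-Phi-reduction}) reduces us to $j\geq1$.

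By \eqref{eq:d-square}, $d$ has eigenvalues $1,\varrho,\zeta,\bar\zeta$, where $|\zeta|=\varrho^{-1/2}$. Hence $D$ has the six eigenvalues $\varrho,\zeta,\bar\zeta,\varrho\zeta,\varrho\bar\zeta,\zeta\bar\zeta=\varrho^{-1}$. Their moduli are $\varrho,\varrho^{-1/2},\varrho^{-1/2},\varrho^{1/2},\varrho^{1/2},\varrho^{-1}$. I decompose $\mathbf p=\sum_\mu \mathbf p_\mu$ into $D$-eigencomponents. Since $\mathcal B$ is $D$-invariant, $\mathcal B(\mathbf p_\mu,\mathbf p_{\mu'})=0$ unless $\mu\mu'=1$. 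The dual pairs are $\varrho\leftrightarrow\varrho^{-1}$, $\zeta\leftrightarrow\varrho\bar\zeta$, and $\bar\zeta\leftrightarrow\varrho\zeta$. This gives
\[
\Phi_j=a\bigl(\varrho^j+\varrho^{-j}\bigr)+2\operatorname{Re}\bigl(b\,(\varrho\zeta)^j+b'\,\bar\zeta^{\,j}\bigr),
\]
where $a=\mathcal B(\mathbf p_\varrho,\mathbf p_{\varrho^{-1}})$ and $b,b'$ are the corresponding complex pairings. Evenness forces the two complex contributions to recombine into $2\operatorname{Re}\bigl(\beta\,(\varrho^{j/2}e^{ij\phi}+\varrho^{-j/2}e^{-ij\phi})\bigr)$, where $\zeta=\varrho^{-1/2}e^{-i\phi}$ or a similar convention. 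So $\Phi_j=a(\varrho^j+\varrho^{-j})+2\operatorname{Re}(\beta w^j+\bar\beta\ldots)$ with $|w|=\varrho^{1/2}$.

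The constants $a,\beta$ are determined by $\mathbf p$. Since $P=z^+$ is spanned by the $\rho$- and $\sigma$-eigenvectors of $z$, I compute $\mathbf p$ explicitly in the normalization of \cref{app:rank-four-algebra}, using exact arithmetic in $\Q(\sqrt{229})$. Then I compute $\Phi_1,\Phi_2,\Phi_3$ directly as integers or algebraic numbers, using the integer matrix $d$ acting on $\wedge^2\Z^4$ (a $6\times6$ integer matrix). The first key step is to show $a<0$. This is the coefficient of the dominant growth, and it should equal a positive multiple of $\mathcal B(\mathbf p,\mathbf e_+)\mathcal B(\mathbf p,\mathbf e_-)$ for the extremal eigenvectors of $D$. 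Its sign is read off from $\Phi_j/\varrho^j$ for moderately large $j$, or computed exactly.

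The second key step is a tail estimate. For $j\geq j_0$,
\[
|a|(\varrho^j+\varrho^{-j})>2|\beta|\bigl(\varrho^{j/2}+\varrho^{-j/2}\bigr),
\]
which holds once $\varrho^{j/2}>2|\beta|/|a|+1$, roughly. Since $\varrho\approx1.3247$ is small, $j_0$ may be sizable, perhaps $20$ to $40$, depending on $|\beta|/|a|$. This is the main obstacle. A crude bound on $|\beta|$ may force a large $j_0$, so I would bound $|\beta|$ carefully via the explicit components, or by a Cauchy–Schwarz bound against a positive form. The remaining finitely many $1\le j<j_0$ are handled by exact verification. The $\Phi_j$ satisfy the integer linear recurrence given by the characteristic polynomial of $D$, namely $\chi_{\wedge^2 d}$, which is computable from $(T-1)(T^3-T-1)$. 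With a suitable normalization the $\Phi_j$ lie in $\Z$ or $\Z[\sqrt{229}]$, so their signs can be checked by direct iteration from $\Phi_0,\dots,\Phi_5$.

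Alternatively, if the pattern allows, I would try to find a closed form. One candidate is to express $\Phi_j$ through the integer sequence $\operatorname{tr}$-like quantities of $d^j$, such as Padovan/Perrin-type sequences attached to $T^3-T-1$. Negativity would then follow from a simple inequality between Perrin numbers. I would attempt this first, falling back on the recurrence-plus-tail argument above.
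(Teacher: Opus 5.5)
Your skeleton is viable, and every ingredient in it is true: write $\Phi_j=a(\varrho^j+\varrho^{-j})+2\operatorname{Re}\bigl(\beta(\zeta^j+\zeta^{-j})\bigr)$, show $a<0$, dominate the $\varrho^{\pm j/2}$ part for $j\ge j_0$, and check $1\le j<j_0$ directly. But for this proposition the sign computation \emph{is} the proof, and you carry out none of the three quantitative steps.

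\begin{itemize}
\item $a$ is never computed. Its sign is also not that of a ``positive multiple'' of $\mathcal B(\mathbf p,\mathbf e_+)\mathcal B(\mathbf p,\mathbf e_-)$ until you fix the normalization of $\mathbf e_\pm$, since $a=\mathcal B(\mathbf p,\mathbf e_+)\mathcal B(\mathbf p,\mathbf e_-)/\mathcal B(\mathbf e_+,\mathbf e_-)$. Reading the sign off $\Phi_j/\varrho^j$ is rigorous only once $|\beta|$ is already bounded.
\item No bound on $|\beta|/|a|$ is given, so $j_0$ is never fixed.
\item The finite range is left unchecked.
\end{itemize}

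Moreover, the exact-arithmetic mechanism you rely on is false. The roots $\rho,\sigma$ do not lie in $\mathbb Q(\sqrt{229})$. Up to scale, the Pl\"ucker coordinates of $P$ lie in the real quartic field $\mathbb Q(\eta)$, $\eta=\rho\sigma$. Its only quadratic subfield is $\mathbb Q(\sqrt5)$, and it meets $\mathbb Q(\sqrt{229})$ only in $\mathbb Q$. The ratio $\Phi_3/\Phi_2=\mathfrak b/\mathfrak a$ does not depend on the normalization, and it is not even in $\mathbb Q(\sqrt5)$: compare the coefficients of $1$ and $\eta$ in the formulas for $\mathfrak a,\mathfrak b$. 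So no rescaling puts the $\Phi_j$ in $\mathbb Z$ or $\mathbb Z[\sqrt{229}]$. A direct check would need certified arithmetic in $\mathbb Q(\eta)$. An exact value of $a$ would require projecting $\mathbf p$ onto the $\varrho^{\pm1}$-eigenlines of $D$, i.e.\ working in a compositum of $\mathbb Q(\eta)$ and $\mathbb Q(\varrho)$.

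What is missing is the reduction that makes the paper's proof short. Evaluate the order-six recurrence from $\chi_D=(T^3-T-1)(T^3+T^2-1)$ at $j=0$ and use evenness and $\Phi_0=0$; this gives $\Phi_1=\Phi_2+\Phi_3$. Hence an even solution with $\Phi_0=0$ is determined by $(\Phi_2,\Phi_3)$. In fact $\Phi_j=\Phi_2\,\alpha_j+\Phi_3\,\beta_j$, where $\alpha_j=\omega_j+\omega_{-j}$ and $\beta_j=\omega_{j-1}+\omega_{-j-1}$ come from the bilateral Padovan sequence $\omega$ ($\omega_0=0$, $\omega_1=1$, $\omega_2=0$, $\omega_{j+3}=\omega_{j+1}+\omega_j$). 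This splits the problem into two independent parts:
\begin{itemize}
\item Only two numbers depend on $\mathbf p$, namely $\Phi_2=-\mathfrak a<0$ and $\Phi_3=-\mathfrak b<0$, from the explicit Pl\"ucker computation.
\item All dependence on $j$ is carried by universal integer sequences. Their nonnegativity, with $\alpha_j+\beta_j>0$, follows from the Binet formula with $\gamma>3/10$, $|\delta|<2/5$ for $j\ge17$, and from integer iteration for $j\le16$.
\end{itemize}
Your closing remark about Padovan/Perrin sequences points in this direction, but it states no identity and no inequality. With the identity in hand, your own route also closes, because its coefficients become explicit: $a=-\gamma(\mathfrak a+\mathfrak b/\varrho)<0$ and $\beta=-\delta(\mathfrak a+\mathfrak b\zeta^{-1})$.
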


\subsection{Proof of Proposition~\ref{prop:Phi-negative}}

 \begin{lemma}\label{lem:Phi-recurrence-initial}
 	The sequence $(\Phi_j)_{j\in\Z}$ satisfies
 	\begin{equation}\label{eq:Phi-recurrence}
 		\Phi_{j+3}+\Phi_{j+2}-\Phi_{j+1}-3\Phi_j
 		-\Phi_{j-1}+\Phi_{j-2}+\Phi_{j-3}=0
 		\qquad(j\in\Z).
 	\end{equation}
	Moreover, for the normalization of $\mathbf p$ specified in
   \cref{app:rank-four-algebra}, one has $\Phi_0=0$ and
   \begin{equation}\label{eq:Phi-initial}
    \Phi_2=-\mathfrak a,
    \qquad
    \Phi_3=-\mathfrak b,
   \end{equation}
 
 	where
 	\begin{align*}
 		\mathfrak a
 		&=\frac{(29+12\sqrt5)\eta+744+408\sqrt5}{99},\\
 		\mathfrak b
 		&=\frac{(15\sqrt5-16)\eta-280+312\sqrt5}{99},
 	\end{align*}
 	with $\eta=\rho\sigma$. In particular,
 	$\mathfrak a,\mathfrak b>0$.
 \end{lemma}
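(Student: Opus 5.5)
The recurrence comes from the characteristic polynomial of $D=\wedge^2 d$. The initial values come from an explicit computation of $\mathbf p$.

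\emph{Step 1: the recurrence.} Since $\Phi_j=\mathcal B(\mathbf p,D^j\mathbf p)$ is linear in $D^j\mathbf p$, any polynomial $f$ with $f(D)=0$ yields a linear recurrence
\[
\sum_i f_i\Phi_{j+i}=\mathcal B\bigl(\mathbf p,D^{j}f(D)\mathbf p\bigr)=0 .
\]
By \eqref{eq:d-square}, the eigenvalues of $d$ are $1,\varrho,\zeta,\bar\zeta$. Hence the eigenvalues of $D$ are the pairwise products
\[
\varrho,\ \zeta,\ \bar\zeta,\ \varrho\zeta,\ \varrho\bar\zeta,\ \zeta\bar\zeta .
\]
Using $\varrho\zeta\bar\zeta=1$, these are the roots of $T^3-T-1$ together with their inverses. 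So
\[
\chi_D(T)=(T^3-T-1)(T^3+T^2-1).
\]
Expanding gives $T^6+T^5-T^4-3T^3-T^2+T+1$, which is palindromic. Multiplying by $T^{j-3}$ gives exactly \eqref{eq:Phi-recurrence}. I would double-check the product by direct multiplication, or by computing $\chi_{\wedge^2 d}$ from the coefficients of $\chi_d$.

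\emph{Step 2: $\Phi_0=0$.} This is immediate: $\mathbf p$ is decomposable, so $\mathbf p\wedge\mathbf p=0$.

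\emph{Step 3: $\Phi_2$ and $\Phi_3$.} These require explicit data. I would compute eigenvectors $w_\rho,w_\sigma$ of $z$ for the eigenvalues $\rho$ and $\sigma$. Their coordinates lie in $\mathbb Q(\sqrt{229},\rho,\sigma)$, since $\rho,\sigma$ are roots of the two quadratic factors in \eqref{eq:z4-charpoly}. Then set $\mathbf p=w_\rho\wedge w_\sigma$, with the normalization fixed in \cref{app:rank-four-algebra}. Next I would evaluate
\[
\Phi_2=\mathcal B(\mathbf p,D^2\mathbf p)=\det\bigl[w_\rho,w_\sigma,cw_\rho,cw_\sigma\bigr],
\qquad
\Phi_3=\det\bigl[w_\rho,w_\sigma,d^3w_\rho,d^3w_\sigma\bigr].
\]
I would then reduce these symmetric expressions using
\[
\rho+\rho^{-1}=\tfrac{11+\sqrt{229}}2,\qquad
\sigma+\sigma^{-1}=\tfrac{11-\sqrt{229}}2 .
\]
After the normalization, the results should collapse to expressions in $\eta=\rho\sigma$ with coefficients in $\mathbb Q(\sqrt5)$. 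The $\sqrt5$ presumably enters through the normalization factor, so its origin must be tracked carefully.

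\emph{Main obstacle.} This algebraic simplification is where I expect the difficulty. The way to control it is to work in a basis adapted to $s$, using $szs=z^{-1}$ and $s^2=I$. This pairs the $\rho$-eigenvector with the $\rho^{-1}$-eigenvector, which should cut down the symbolic bulk. If that fails, I would fall back on a certified numerical evaluation combined with exact minimal-polynomial identification.

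\emph{Step 4: positivity of $\mathfrak a,\mathfrak b$.} This is elementary once the formulas are known. Since $\rho>|\sigma|>1$ and $\sigma<0$, we have $\eta<-1$, and $\eta$ is given explicitly from the quadratics. For $\mathfrak a$, the constant term is
\[
744+408\sqrt5\approx1656 ,
\]
while $|29+12\sqrt5|\cdot|\eta|$ must be smaller. Bounding $\eta$, numerically about $-9.6$, gives $\mathfrak a>0$. Similarly for $\mathfrak b$: the constant is about $418$, and
\[
(15\sqrt5-16)\,|\eta|\approx 17.5\cdot 9.6\approx 168 ,
\]
so $\mathfrak b>0$. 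Rational enclosures of $\sqrt5$, $\sqrt{229}$ and $\eta$ finish the argument.
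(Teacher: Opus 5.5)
Your Steps 1 and 2 are exactly the paper's argument: Cayley--Hamilton for $D=\wedge^2d$ with $\chi_D=(T^3-T-1)(T^3+T^2-1)$, and $\Phi_0=0$ by decomposability. Step 3 is the same coordinate computation the paper records in \cref{app:rank-four-algebra}. What is missing is the one identity that makes that computation land on the stated closed form, and your guess about it is wrong. The normalization $\mathbf p=-v(\rho)\wedge v(\sigma)/(9\eta(\rho-\sigma))$ contains no $\sqrt5$; the $\sqrt5$ already lives in $\mathbb Q(\eta)$. The paper computes $\chi_{\wedge^2z}(T)=(T-1)^2(T^4+27T^3+173T^2+27T+1)$. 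The quartic is palindromic, so $u=\eta+\eta^{-1}$ satisfies $u^2+27u+171=0$, whose discriminant is $45$, and $\eta<-12$ selects $\eta+\eta^{-1}=-(27+3\sqrt5)/2$. Together with $\rho+\sigma=11\eta/(\eta+1)$, read off from the $T^3$-coefficient of $\chi_z$, this puts every coordinate of $\mathbf p$, and hence $\Phi_2$ and $\Phi_3$, in $\mathbb Q(\sqrt5)+\mathbb Q(\sqrt5)\eta$. In your variables the same fact reads $(\rho-\rho^{-1})(\sigma-\sigma^{-1})=-3\sqrt5$, because $(a^2-4)(b^2-4)=45$ for $a,b=\tfrac{11\pm\sqrt{229}}2$. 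If you reduce only with $\rho+\rho^{-1}$ and $\sigma+\sigma^{-1}$, you have no way to recognize the result as $-\mathfrak a$ or $-\mathfrak b$.

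Step 4 rests on a wrong number. In fact $\rho\approx12.99$ and $\sigma\approx-1.293$, so $\eta\approx-16.79$, not $-9.6$. Then $(15\sqrt5-16)|\eta|\approx294.6$ and $99\mathfrak b\approx123$, a much thinner margin than you claim. Positivity still holds, but it needs a genuine lower bound on $\eta$. Your bound $\eta<-1$ points the wrong way. A lower bound $\eta>-23.8$ suffices, but nothing much weaker does, since $\mathfrak b$ vanishes near $\eta\approx-23.81$. The paper gets $\eta>\eta+\eta^{-1}=-(27+3\sqrt5)/2>-17$ from the identity above, using $\eta<0$ and $45<49$. This yields $99\mathfrak a>251+204\sqrt5$ and $99\mathfrak b>57\sqrt5-8$.
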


 \begin{proof}
 	Set $\eta=\rho\sigma$. Since $P$ is spanned by the $\rho$- and
 	$\sigma$-eigenvectors of $z$, its Pl\"ucker line is an eigenline of
 	$\wedge^2z$ with eigenvalue $\eta$. A direct calculation gives
 	\[
 		\chi_{\wedge^2z}(T)
 		=(T-1)^2
 		\bigl(T^4+27T^3+173T^2+27T+1\bigr).
 	\]
 	Since $\eta\neq1$, it follows that
 	\[
 	\eta^4+27\eta^3+173\eta^2+27\eta+1=0.
 	\]
 	Evaluating the two quadratic factors in \eqref{eq:z4-charpoly} at
 	$12$ and $-1$, respectively, gives $\rho>12$ and $\sigma<-1$.
 	Thus $\eta<-12$, and hence $\eta+\eta^{-1}<-12$. Dividing the
 	preceding equation by $\eta^2$ gives
 	\[
 	(\eta+\eta^{-1})^2
 	+27(\eta+\eta^{-1})+171=0.
 	\]
 	Therefore
 	\begin{equation}\label{eq:eta-sqrt5}
 		\eta+\eta^{-1}
 		=-\frac{27+3\sqrt5}{2},
 	\end{equation}
 	and in particular $\eta>-17$.
 	
 		Let $\lambda_1,\lambda_2,\lambda_3$ be the roots of $T^3-T-1$.
 	By \eqref{eq:d-square}, the eigenvalues of $d$ are
 	$1,\lambda_1,\lambda_2,\lambda_3$, with
 	$\lambda_1\lambda_2\lambda_3=1$. Hence the eigenvalues of
 	$D=\wedge^2d$ are
 	\[
 	\lambda_1,\lambda_2,\lambda_3,
 	\lambda_1^{-1},\lambda_2^{-1},\lambda_3^{-1}.
 	\]
 	Consequently,
 	\[
 		\chi_D(T)
 		=(T^3-T-1)(T^3+T^2-1)
 		=T^6+T^5-T^4-3T^3-T^2+T+1.
 	\]
 	By Cayley--Hamilton,
 	\[
 	D^6+D^5-D^4-3D^3-D^2+D+I=0.
 	\]
 	Multiplying by $D^{j-3}$ and pairing with $\mathbf p$ gives
 	\eqref{eq:Phi-recurrence}.
 	
 	Since $\mathbf p$ is decomposable,
 	$\Phi_0=\mathcal B(\mathbf p,\mathbf p)=0$.
 	With the normalization of $\mathbf p$ used in
 	\cref{app:rank-four-algebra}, the coordinate calculation there gives
 	\eqref{eq:Phi-initial}.
 	
 	Finally, since $\eta>-17$,
 	\[
 	99\mathfrak a>251+204\sqrt5>0,
 	\qquad
 	99\mathfrak b>57\sqrt5-8>0.
 	\]
 	This proves the lemma.
 	\end{proof}

  \begin{lemma}\label{lem:bilateral-padovan}
  	Let $(\omega_j)_{j\in\Z}$ be defined by
  	$\omega_0=0$, $\omega_1=1$, $\omega_2=0$, and
  	$\omega_{j+3}=\omega_{j+1}+\omega_j$.
  	For $j\geq1$, put
  	$\alpha_j=\omega_j+\omega_{-j}$ and
  	$\beta_j=\omega_{j-1}+\omega_{-j-1}$.
  	Then
  	\[
  	\alpha_j,\beta_j\geq0,
  	\qquad
  	\alpha_j+\beta_j>0
  	\qquad(j\geq1).
  	\]
  \end{lemma}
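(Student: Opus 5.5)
The approach is to combine an explicit closed form for $(\omega_j)$ with a finite check. First I tabulate the sequence in both directions. Forward, $\omega_{j+3}=\omega_{j+1}+\omega_j$ gives $0,1,0,1,1,1,2,2,3,4,5,7,\dots$. Backward, I use $\omega_j=\omega_{j+3}-\omega_{j+1}$, which gives $\omega_{-1}=0$, $\omega_{-2}=1$, $\omega_{-3}=-1$, $\omega_{-4}=1$, $\omega_{-5}=0$, $\omega_{-6}=-1$, $\omega_{-7}=2,\dots$. Two facts drive the argument.
\begin{itemize}
\item The forward sequence is nonnegative, and positive for $j\ge3$. This follows by an immediate induction once $\omega_3,\omega_4,\omega_5>0$.
\item The backward sequence oscillates in sign and grows much more slowly. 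Indeed $(\omega_{-j})_{j\ge0}$ satisfies the reversed recurrence, whose characteristic polynomial is $T^3+T^2-1$, with roots $\lambda_1^{-1},\lambda_2^{-1},\lambda_3^{-1}$.
\end{itemize}
Thus $\alpha_j$ and $\beta_j$ are dominated by the term $\varrho^j$, where $\varrho\approx1.3247$ is the real root of $T^3-T-1$. The competing terms have moduli $\varrho^{1/2}\approx1.1509$, $\varrho^{-1}$ and $\varrho^{-1/2}$.

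To make this quantitative, I write the Binet form $\omega_j=\sum_{i=1}^3 c_i\lambda_i^{\,j}$ for all $j\in\Z$. The coefficients are determined by $\omega_0,\omega_1,\omega_2$ through a Vandermonde system, and explicitly $c_i=\lambda_i/(3\lambda_i^2-1)$ up to the standard residue formula. I then split
\[
\alpha_j=c_1\varrho^{\,j}+\bigl(2\operatorname{Re}(c_2\zeta^{j})+c_1\varrho^{-j}+2\operatorname{Re}(c_2\zeta^{-j})\bigr),
\]
and similarly for $\beta_j$ with shifted indices. The main term is positive because $c_1>0$. The error term is bounded in absolute value by $2|c_2|\varrho^{j/2}+c_1\varrho^{-j}+2|c_2|\varrho^{-j/2}$. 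This is at most $(2|c_2|+c_1+2|c_2|)\varrho^{j/2}$. Strict positivity of $\alpha_j$ and $\beta_j$ therefore holds as soon as $\varrho^{j/2}>(c_1+4|c_2|)/c_1$, or $\varrho^{(j-1)/2}$ beyond a similar threshold for $\beta_j$. I will use crude numerical enclosures for $\varrho\in(1.32,1.33)$, for $c_1$, and for $|c_2|$. The outcome should be an explicit $N$, presumably of order $10$ to $20$, beyond which $\alpha_j,\beta_j>0$.

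For $1\le j\le N$, I verify $\alpha_j,\beta_j\ge0$ and $\alpha_j+\beta_j>0$ directly from the table. For example, $\alpha_1=\omega_1+\omega_{-1}=1$, $\beta_1=\omega_0+\omega_{-2}=1$, $\alpha_2=0+1=1$, $\beta_2=\omega_1+\omega_{-3}=0$, $\alpha_3=1-1=0$, and $\beta_3=\omega_2+\omega_{-4}=1$. The zeros that occur here are why the statement only claims $\ge0$ together with positivity of the sum.

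The main obstacle is obtaining rigorous, reasonably small constants $c_1$ and $|c_2|$, so that $N$ is small and the finite check is short. The oscillation of $\omega_{-j}$ rules out a naive sign-preserving induction. If the Binet bookkeeping becomes awkward, I would fall back on an induction on blocks. I would prove simultaneously that $|\omega_{-j}|\le \omega_{j-m}$ for a fixed shift $m$ and all $j\ge j_0$, using
\[
|\omega_{-j-3}|\le|\omega_{-j}|+|\omega_{-j-2}|
\]
on one side and $\omega_{j+3}=\omega_{j+1}+\omega_j$ on the other. Because this triangle inequality loses the cancellation, the shift $m$ has to be chosen with slack. I would therefore try the Binet estimate first.
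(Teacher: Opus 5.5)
Your proposal is correct and follows essentially the paper's argument. Both use a bilateral Binet formula; your coefficients $c_i=\lambda_i/(3\lambda_i^2-1)$ coincide with the paper's $\gamma=\varrho^2/(2\varrho+3)$ and $\delta=\zeta^2/(2\zeta+3)$ via $\lambda^3=\lambda+1$. Both then prove a tail estimate from $|\zeta^{\pm1}|=\varrho^{\mp1/2}$ and check small $j$ directly. The paper gets $\gamma>3/10$ and $|\delta|<2/5$, hence positivity for $j\ge17$, and finds that for $j\le16$ the only zeros are $\alpha_3=\beta_2=\beta_5=\beta_7=0$.

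One detail: your $\beta_j$ threshold must carry an extra factor $\varrho$ coming from $\omega_{-j-1}$. So the comparison is $c_1\varrho^{(j-3)/2}$ against roughly $4|c_2|$, matching the paper's bound $\beta_j>\varrho^{(j+1)/2}\bigl(\gamma\varrho^{(j-3)/2}-4|\delta|\bigr)$.
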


 \begin{proof}
 	
 	Let $\varrho>1$ be the real root of $X^3-X-1$, and let
 	$\zeta,\bar\zeta$ be the remaining two roots. Then
 	$|\zeta|=\varrho^{-1/2}$. For
 	$F(X):=\sum_{j\ge0}\omega_jX^j$, the recurrence gives
 	\[
 	F(X)=\frac{X}{1-X^2-X^3}
 	=\frac{X}{(1-\varrho X)(1-\zeta X)(1-\bar\zeta X)}.
 	\]
 	Hence
 	\[
 	F(X)
 	=\frac{\gamma}{1-\varrho X}
 	+\frac{\delta}{1-\zeta X}
 	+\frac{\bar\delta}{1-\bar\zeta X},
 	\]
 	where $\gamma=\frac{\varrho^2}{2\varrho+3}$ and
 	$\delta=\frac{\zeta^2}{2\zeta+3}$. Expanding each term as a geometric
 	series and comparing coefficients gives
 	\begin{equation}\label{eq:omega-binet}
 		\omega_j
 		=\gamma\varrho^j+2\operatorname{Re}(\delta\zeta^j)
 		\qquad(j\geq0).
 	\end{equation}
 	Since both sides satisfy the same bilateral recurrence, this formula
 	extends to every $j\in\Z$.

 	Let $f(X)=X^3-X-1$. Since
 	$f(13/10)<0$ and $f(4/3)>0$, we have
 	$13/10<\varrho<4/3$.
 	Since $\gamma=\varrho^2/(2\varrho+3)$ and the function
 	$x\mapsto x^2/(2x+3)$ is increasing for $x>0$,
 	$
 	\gamma>
 	(13/10)^2/(2(13/10)+3)
 	=169/560
 	>3/10.
 	$
	Moreover, $|\delta|^2=|\zeta|^4/|2\zeta+3|^2$.
 	By Vieta's formulas, $\zeta+\bar\zeta=-\varrho$ and
 	$\zeta\bar\zeta=\varrho^{-1}$, and hence
 	$
 	|2\zeta+3|^2
 	=(2\zeta+3)(2\bar\zeta+3)
 	=\frac4\varrho-6\varrho+9.
 	$
 	Since $|\zeta|^2=\varrho^{-1}$ and $\varrho^3=\varrho+1$, it follows that
 	$
 	|\delta|^2
 	=1/(9\varrho^2-2\varrho-6)
 	<\frac4{25},
 	$
 	where the last inequality follows from $\varrho>13/10$. Thus
 	$|\delta|<2/5$.
 	Now, by the formula for $\omega_j$,
 	\[
 		\alpha_j=\omega_j+\omega_{-j}=\gamma(\varrho^j+\varrho^{-j})
 		+2\operatorname{Re}
 		\bigl(\delta\zeta^j+\delta\zeta^{-j}\bigr).
 	\]
 	Using $\operatorname{Re}(w)\geq-|w|$ and
 	$|\zeta|=\varrho^{-1/2}$, we obtain
 	\[
 		\alpha_j\geq
 		\gamma(\varrho^j+\varrho^{-j})
 		-2|\delta|
 		\bigl(\varrho^{-j/2}+\varrho^{j/2}\bigr)
 		>
 		\varrho^{j/2}
 		\bigl(\gamma\varrho^{j/2}-4|\delta|\bigr).
 	\]
 	Similarly, $
 		\beta_j=\omega_{j-1}+\omega_{-j-1}
 		=\gamma(\varrho^{j-1}+\varrho^{-j-1})
 		+2\operatorname{Re}
 		\bigl(\delta\zeta^{j-1}
 		+\delta\zeta^{-j-1}\bigr)$,
 	and therefore $\beta_j>\varrho^{(j+1)/2}
 		\bigl(\gamma\varrho^{(j-3)/2}-4|\delta|\bigr)$.
 	
 	Finally,
 	$4|\delta|/\gamma< (4(2/5))/(3/10)=16/3$ and $\varrho^7>(13/10)^7>16/3$.
 	Hence, if $j\geq17$, then
 	$\varrho^{j/2}>4|\delta|/\gamma$ and
 	$\varrho^{(j-3)/2}>4|\delta|/\gamma$, so
 	$\alpha_j>0$ and $\beta_j>0$.

 	For $1\leq j\leq16$, direct iteration of the recurrence gives
 	$\alpha_j,\beta_j\geq0$; the only zeros are
 	$\alpha_3=0$ and $\beta_2=\beta_5=\beta_7=0$.
 	Thus $\alpha_j+\beta_j>0$ for every $j\geq1$.
 \end{proof}

\begin{proof}[Proof of \cref{prop:Phi-negative}]
	Extend $\alpha_j$ and $\beta_j$ to all $j\in\Z$ by the same formulas.
	Then, for every $j\in\Z$,
	$\alpha_{-j}=\alpha_j$ and $\beta_{-j}=\beta_j$.
	Let $T$ denote the shift operator on bilateral sequences,
	$(Tu)_j=u_{j+1}$. The recurrence
	$\omega_{j+3}=\omega_{j+1}+\omega_j$ is equivalent to
	$
	(T^3-T-1)\omega=0.
	$
	If $\widetilde\omega_j:=\omega_{-j}$, then
	$\widetilde\omega_{j+3}+\widetilde\omega_{j+2}-\widetilde\omega_j=0$,
	so
	$
	(T^3+T^2-1)\widetilde\omega=0.
	$
	Since
	$\alpha=\omega+\widetilde\omega$, $\beta=T^{-1}\omega+T\widetilde\omega$,
	and the shift operator commutes with every polynomial in $T$, both
	$\alpha$ and $\beta$ are annihilated by
	$
	(T^3-T-1)(T^3+T^2-1).
	$
	Equivalently, both sequences satisfy \eqref{eq:Phi-recurrence}.
	Moreover,
	\[
	(\alpha_0,\beta_0)=(0,0),
	\qquad
	(\alpha_2,\beta_2)=(1,0),
	\qquad
	(\alpha_3,\beta_3)=(0,1).
	\]

	For an even sequence $(x_j)_{j\in\Z}$ satisfying
	\eqref{eq:Phi-recurrence} with $x_0=0$, the case $j=0$ gives
	$x_1=x_2+x_3$. Hence such a sequence is determined by $x_2$ and
	$x_3$. By \cref{lem:Phi-recurrence-initial}, it follows that
	\[
	\Phi_j
	=-\mathfrak a\,\alpha_j-\mathfrak b\,\beta_j
	\qquad(j\in\Z).
	\]
	For $j\geq1$, \cref{lem:bilateral-padovan} and
	$\mathfrak a,\mathfrak b>0$ imply $\Phi_j<0$. Since $(\Phi_j)$ is
	even, the same holds for $j\leq-1$, while $\Phi_0=0$. This proves the
	proposition.
\end{proof}

\subsection{Rank-four admissibility conditions}

We now translate the information about $\Phi_j$ into opposition
relations in $\Gr_2(\R^4)$.

\begin{lemma}\label{lem:grassmann-separation}
For the rank-four pair \eqref{eq:p4-q4}, the following hold.
\begin{enumerate}[label=\textup{(\roman*)}]
 \item For every $k\in\Z$,
 \[
  \mathcal B(\mathbf p,C^k\mathbf q)>0,
  \qquad
  \mathcal B(\mathbf q,C^k\mathbf p)>0.
 \]
 \item For every $k\neq0$,
 \[
  \mathcal B(\mathbf p,C^k\mathbf p)<0,
  \qquad
  \mathcal B(\mathbf q,C^k\mathbf q)<0.
 \]
 \item The planes $P,Q$ are opposite to each other and to both fixed planes $c^+,c^-$.  Moreover, for every $k\neq0$ and every $R_0,S_0\in\{P,Q\}$, the planes $R_0$ and $c^kS_0$ are opposite.
\end{enumerate}
\end{lemma}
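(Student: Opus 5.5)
Parts (i) and (ii) follow directly from \cref{lem:rank-four-Phi-reduction} and \cref{prop:Phi-negative}. By \eqref{eq:cross-Phi},
\[
\mathcal B(\mathbf p,C^k\mathbf q)=-\Phi_{2k-1},
\qquad
\mathcal B(\mathbf q,C^k\mathbf p)=-\Phi_{2k+1}.
\]
Since $2k\pm1$ is odd, it is never zero. Hence both right-hand sides are strictly positive for every $k\in\Z$, which proves (i). Similarly, by \eqref{eq:self-Phi},
\[
\mathcal B(\mathbf p,C^k\mathbf p)=\mathcal B(\mathbf q,C^k\mathbf q)=\Phi_{2k},
\]
and $2k\neq0$ when $k\neq0$. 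So \cref{prop:Phi-negative} gives (ii).

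For (iii) we use the criterion \eqref{eq:grassmann-opposition}. Since $C=\wedge^2c$, the vectors $C^k\mathbf p$ and $C^k\mathbf q$ are Plücker representatives of $c^kP$ and $c^kQ$. Opposition of $R_0$ with $c^kS_0$, for $R_0,S_0\in\{P,Q\}$ and $k\neq0$, is therefore equivalent to the nonvanishing of $\mathcal B(\mathbf r_0,C^k\mathbf s_0)$, and this holds by (i) and (ii). Taking $k=0$ in (i) gives $\mathcal B(\mathbf p,\mathbf q)>0$, so $P\operatorname{opp}Q$.

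The remaining point is opposition of $P$ and $Q$ with $c^\pm$. Let $\mathbf c_\pm$ be Plücker representatives of $c^\pm$; as in the discussion after the definition of biproximality, these are the dominant eigenvectors of $C^{\pm1}$. Expanding $\mathbf p$ in a (generalized) eigenbasis of $C$, the top-order behaviour of $C^k\mathbf p$ as $k\to+\infty$ is $\lambda_{\max}^k\,\mathcal B(\mathbf p,\mathbf c_-)\mathbf c_+$ (up to a nonzero constant). Here we use that the repelling hyperplane of $C$ is $\mathbf c_-^{\perp_{\mathcal B}}$, exactly as in the proof of \cref{lem:regular-ray-general}. Hence
\[
\mathcal B(\mathbf p,C^k\mathbf p)=\kappa\,\varrho^{2k}\,\mathcal B(\mathbf p,\mathbf c_-)\,\mathcal B(\mathbf p,\mathbf c_+)+O(\varrho^{k}),
\]
with $\kappa\neq0$, since the other eigenvalue moduli of $C$ are at most $\varrho$.

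The most direct route is to use the explicit closed form $\Phi_j=-\mathfrak a\alpha_j-\mathfrak b\beta_j$ obtained in the proof of \cref{prop:Phi-negative}. From \eqref{eq:omega-binet}, $\Phi_{2k}$ has leading term $-\gamma(\mathfrak a+\mathfrak b\varrho^{-1})\varrho^{2k}$. Its coefficient is nonzero, because $\gamma>0$ and $\mathfrak a,\mathfrak b>0$. The eigenvalues of $C=D^2$ have moduli $\varrho^2,\varrho,\varrho,\varrho^{-1},\varrho^{-1},\varrho^{-2}$, so the $\varrho^{2k}$-coefficient of $\mathcal B(\mathbf p,C^k\mathbf p)$ is exactly the product of the two pairings with $\mathbf c_\pm$, times a nonzero normalizing constant. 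Both pairings are therefore nonzero, so $P$ is opposite to $c^+$ and $c^-$. The same argument applies to $\mathbf q$, since $\mathcal B(\mathbf q,C^k\mathbf q)=\Phi_{2k}$ as well.

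The main obstacle is this last step: one must check that the leading coefficient of $\Phi_{2k}$ really isolates the product $\mathcal B(\cdot,\mathbf c_+)\mathcal B(\cdot,\mathbf c_-)$. This requires that $\varrho^2$ is a simple eigenvalue of $C$ and that its left and right eigenvectors pair to a nonzero quantity. Both follow from biproximality (\cref{lem:c4-biproximal}) together with $\mathcal B$-invariance.
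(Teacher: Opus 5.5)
Your proposal is correct and follows the paper's proof. Parts (i)--(ii) and the opposition of $R_0$ with $c^kS_0$ come straight from the $\Phi$-reduction and \cref{prop:Phi-negative}. Opposition of $P$ to $c^\pm$ comes from the nonzero leading coefficient $-\gamma(\mathfrak a+\mathfrak b/\varrho)$ in the Binet asymptotics of $\Phi$, which forces the extremal eigen-components of $\mathbf p$ (equivalently, its pairings with $\mathbf c_\pm$) to be nonzero.

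The differences are only cosmetic. You work with $C=D^2$ and $\Phi_{2k}$ instead of $D$ and $\Phi_j$. You handle $Q$ by rerunning the same argument on $\mathcal B(\mathbf q,C^k\mathbf q)=\Phi_{2k}$, whereas the paper computes the components of $D^{-1}\mathbf p$ in $E_\pm$.
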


\begin{proof}
 Parts (i)--(ii) follow immediately from \eqref{eq:self-Phi}, \eqref{eq:cross-Phi}, and \cref{prop:Phi-negative}.  In particular, $P$ and $Q$ are opposite.

 It remains to prove that $P$ and $Q$ are opposite to $c^+$ and $c^-$. 
 By \eqref{eq:omega-binet},
 \[
 \lim_{j\to\infty}\varrho^{-j}\Phi_j
 =-\gamma\left(\mathfrak a+\frac{\mathfrak b}{\varrho}\right)\neq0.
 \]
 By \eqref{eq:d-square},
 $
\chi_d(T)=(T-1)(T^3-T-1).
 $
The cubic polynomial $T^3-T-1$ has discriminant $-23$ and does
not vanish at $1$. Hence $\chi_d$ has four distinct roots, so
$d$ is diagonalizable over $\mathbb C$. Consequently,
$D=\wedge^2d$ is also diagonalizable over $\mathbb C$.
 Let $E_+$ and $E_-$ be the one-dimensional eigenspaces of
 $D=\wedge^2d$ corresponding to the eigenvalues $\varrho$ and
 $\varrho^{-1}$, respectively, and choose nonzero vectors
 $v_+\in E_+$ and $v_-\in E_-$.  Write
 $
 \mathbf p=a_+v_++a_-v_-+\mathbf p_0,
 $
 where $\mathbf p_0$ belongs to the sum of the remaining eigenspaces of
 $D$.  Since $D$ preserves $\mathcal B$, if
 $u\in E_\lambda$ and $v\in E_\mu$, then
 \[
 \mathcal B(u,v)
 =\mathcal B(Du,Dv)
 =\lambda\mu\,\mathcal B(u,v).
 \]
 Hence $\mathcal B(E_\lambda,E_\mu)=0$ unless
 $\lambda\mu=1$.  In particular,
 $\mathcal B(v_-,v_+)\neq0$, since $\mathcal B$ is nondegenerate.
 
 The remaining eigenvalues of $D$ have modulus at most
 $\varrho^{1/2}$.  Therefore
 \[
 \Phi_j
 =\mathcal B(\mathbf p,D^j\mathbf p)
 =a_+a_-\,\mathcal B(v_-,v_+)\varrho^j
 +O(\varrho^{j/2}),
 \]
 and hence
 \[
 a_+a_-\,\mathcal B(v_-,v_+)
 =\lim_{j\to\infty}\varrho^{-j}\Phi_j
 =-\gamma\left(\mathfrak a+\frac{\mathfrak b}{\varrho}\right)
 \neq0.
 \]
 Thus $a_+\neq0$ and $a_-\neq0$.  Moreover,
 $\mathcal B(\mathbf p,v_+)
 =a_-\mathcal B(v_-,v_+)\neq0$ and
 $\mathcal B(\mathbf p,v_-)
 =a_+\mathcal B(v_+,v_-)\neq0$.
 The Pl\"ucker lines of the attracting and repelling fixed planes
 $c^+$ and $c^-$ of $c=d^2$ are $E_+$ and $E_-$, respectively.
 \eqref{eq:grassmann-opposition} shows that $P$ is opposite to both
 $c^+$ and $c^-$.
 
 A representative of $Q=d^{-1}P$ is $D^{-1}\mathbf p$, and
 \[
 D^{-1}\mathbf p
 =a_+\varrho^{-1}v_+
 +a_-\varrho v_-
 +D^{-1}\mathbf p_0.
 \]
 Thus its components in $E_+$ and $E_-$ are again nonzero, and the same
 argument shows that $Q$ is opposite to both $c^+$ and $c^-$.  Finally,
 \eqref{eq:grassmann-opposition} together with (i)--(ii) gives that,
 for every $k\neq0$ and $R_0,S_0\in\{P,Q\}$, the planes $R_0$ and
 $c^kS_0$ are opposite.

\end{proof}

\begin{proposition}\label{prop:rank4-admissible}
The pair $(p,q)$ in \eqref{eq:p4-q4} generates $\SL_4(\Z)$ and is $\Gr_2(\R^4)$-admissible.
\end{proposition}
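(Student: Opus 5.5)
The proof will simply assemble results already established in \cref{sec:rank-four}, checking the three $\cG$-admissibility conditions one at a time, with $\cX=\cG=\Gr_2(\R^4)$.

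\emph{Generation and condition (1).} That $\langle p,q\rangle=\SL_4(\Z)$ is the first assertion of \cref{lem:rank-four-generating-pair}; it comes from the Nielsen moves applied to the Biswas pair $(u,v)$. The same lemma shows that $z=pq$ is biproximal on $\cG$. \cref{lem:c4-biproximal} shows that $c=pqp^{-1}q^{-1}$ is biproximal on $\cG$. Together these give condition (1).

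\emph{Conditions (2) and (3).} We must check that the notation matches the definition of admissibility. There, $P=z^+$ and $Q=p^{-1}z^-$. In \cref{sec:rank-four} we set $P=z^+$ and $R=z^-$, and \cref{lem:rank-four-Phi-reduction} sets $Q=p^{-1}R$, so the two agree.

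\cref{lem:grassmann-separation}(iii) then gives the rest. It states that $P$ and $Q$ are opposite to each other and to both $c^+$ and $c^-$, which is condition (2). It also states that $R_0$ is opposite to $c^kS_0$ for all $k\neq0$ and all $R_0,S_0\in\{P,Q\}$, which is condition (3).

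One further point should be checked: that the Plücker pairing with $C^k\mathbf q$ really encodes opposition with the flag $c^kQ$. This holds because $\wedge^2c^k\,\mathbf q=C^k\mathbf q$ is a decomposable representative of $c^kQ$, and \eqref{eq:grassmann-opposition} then applies.

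There is no real obstacle: all the analytic content lies in \cref{prop:Phi-negative} and \cref{lem:grassmann-separation}. The only point needing care is the bookkeeping just described, namely that $Q$ and the flags $c^{\pm}$ used in \cref{lem:grassmann-separation} are exactly the flags appearing in the admissibility definition. For $c^{\pm}$ this holds because the attracting lines of $\wedge^2 c=D^2$ are the $\varrho^{\pm1}$-eigenlines of $D$.
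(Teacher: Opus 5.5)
Your proposal is correct and follows the paper's proof: generation and biproximality of $z$ come from \cref{lem:rank-four-generating-pair}, biproximality of $c$ from \cref{lem:c4-biproximal}, and conditions (2)--(3) from \cref{lem:grassmann-separation}(iii). Your extra bookkeeping is accurate and already implicit in the paper's proof of \cref{lem:grassmann-separation}: that $Q=p^{-1}z^-$, that $C^k\mathbf q$ represents $c^kQ$, and that $c^\pm$ correspond to the $\varrho^{\pm1}$-eigenlines of $D$.
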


\begin{proof}
 By \cref{lem:rank-four-generating-pair}, the pair $(p,q)$ generates
 $\SL_4(\Z)$ and $z=pq$ is biproximal on $\Gr_2(\R^4)$.
 By \cref{lem:c4-biproximal}, $c$ is also biproximal on
 $\Gr_2(\R^4)$.  The remaining admissibility conditions
 \textup{(2)}--\textup{(3)} follow from
 \cref{lem:grassmann-separation}.
\end{proof}

\begin{corollary}\label{cor:rank4-free-anosov}
	There exists $L_0$ such that, for every $L\geq L_0$,
	the group $\langle z^Lp,c\rangle$ is free and $P_2$-Anosov.
\end{corollary}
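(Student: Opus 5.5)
This corollary is the rank-four analogue of \cref{cor:jordan-free-anosov}, and the plan is to obtain it directly from \cref{prop:admissible-criterion} with $\cX=\cG=\Gr_2(\R^4)$ and $G=\SL_4(\R)$.

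First, \cref{prop:rank4-admissible} says that the pair $(p,q)$ of \eqref{eq:p4-q4} is $\Gr_2(\R^4)$-admissible. One point needs checking: the flags $P$ and $Q$ in the admissibility definition must coincide with the ones used in \cref{sec:rank-four}. There we set $P=z^+$ and $R=z^-$, and $Q=p^{-1}R=p^{-1}z^-$ in \cref{lem:rank-four-Phi-reduction}. This is exactly the convention of \cref{sec:nielsen-criterion}. Likewise $c=pqp^{-1}q^{-1}$ is the commutator appearing in the criterion, and $z=pq$.

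With admissibility in hand, \cref{prop:admissible-criterion} gives an $L_0$ such that, for all $L\ge L_0$, the group $F_L=\langle a_L,c\rangle$ with $a_L=z^Lp$ is free on $a_L,c$ and Anosov for the flag type of $\cG$. By the convention fixed in \cref{sec:flag-tools}, this flag type is $\{2\}$, so $\cG$-Anosov means $P_2$-Anosov. That is precisely the statement of the corollary.

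There is no genuine obstacle, since all of the dynamical input was established earlier. Biproximality of $z$ comes from \cref{lem:rank-four-generating-pair}, biproximality of $c$ from \cref{lem:c4-biproximal}, and the opposition conditions from \cref{lem:grassmann-separation}; \cref{lem:regular-ray-general} and \cref{thm:DK} were already checked to apply on $\cG$. The only care needed is the bookkeeping above: matching the notation $Q=p^{-1}z^-$ and identifying $\cG$-Anosov with $P_2$-Anosov.
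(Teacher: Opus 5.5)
Your proposal is correct and follows the paper's own proof: the paper likewise obtains the corollary by combining \cref{prop:rank4-admissible} with \cref{prop:admissible-criterion} for $\cX=\Gr_2(\R^4)$. Your checks that $Q=p^{-1}z^-$ and $c=pqp^{-1}q^{-1}$ agree with the conventions of \cref{sec:nielsen-criterion}, and that $\cG$-Anosov means $P_2$-Anosov, are exactly the bookkeeping this one-line argument relies on.
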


\begin{proof}
	This follows immediately from
	\cref{prop:rank4-admissible,prop:admissible-criterion}.
\end{proof}

\section{Construction of free subgroups of rank $m$}\label{sec:free-corridors}

We now construct the subgroups $H_{N,m}$ used in
\cref{lem:corridor} and prove that they are free of rank $m$ for all
sufficiently large $N$.

 Fix $n\geq3$. If $n\neq4$, use the Jordan pair from
 \cref{sec:jordan-family}; if $n=4$, use the pair from
 \cref{sec:rank-four}. Choose $L$ sufficiently large, and write
 $\Gamma=\SL_n(\Z)$, $a=a_L$, $b=b_L$, and
 $F=\langle a,c\rangle$. By the preceding sections,
 $\langle a,b\rangle=\Gamma$, while $F$ is an Anosov free group with
 free basis $(a,c)$.
 
 Fix $m\geq3$. For $N\geq1$, set
 \[
 x_N=a^Nb,\qquad
 t=a^{-1},\qquad
 h_j=t^jx_Nt^{-j}=a^{N-j}ba^j,
 \]
 and let $H_{N,m}=\langle h_0,\ldots,h_{m-1}\rangle$.
 
   \begin{lemma}\label{lem:corridor-algebra}
   	For every $N\geq1$, the following hold.
   	\begin{enumerate}[label=\textup{(\roman*)}]
   		\item $\langle x_N,t\rangle=\Gamma$.
   		
   		\item If $c_j=a^{-j}ca^j$ and
   		$J_m=\langle c_0,\ldots,c_{m-2}\rangle$, then
   		$H_{N,m}=\langle J_m,x_N\rangle$.
   		
   		\item The group $J_m$ is free of rank $m-1$. In particular,
   		$J_m$ is non-elementary and quasiconvex in $F$.
   	\end{enumerate}
   \end{lemma}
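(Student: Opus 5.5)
The three parts are elementary, and I would prove them in order using only the relations $\langle a,b\rangle=\Gamma$, $b^{-1}a^{-1}ba=c$, and the fact that $(a,c)$ is a free basis of $F$.

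\emph{Part (i).} We have $a=t^{-1}$, so $a^N\in\langle x_N,t\rangle$. Hence $b=a^{-N}x_N$ also lies in $\langle x_N,t\rangle$. Therefore $\langle x_N,t\rangle$ contains $\langle a,b\rangle=\Gamma$. The reverse inclusion is trivial because $x_N,t\in\Gamma$.

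\emph{Part (ii).} The first step is to rewrite the commutator relation. From $b^{-1}a^{-1}ba=c$ we get $a^{-1}ba=bc$. Conjugating by powers of $a$ then gives
\[
h_{j+1}=a^{N-j-1}ba^{j+1}=a^{N-j}(a^{-1}ba)a^{j}=a^{N-j}bc\,a^{j}=h_j\,(a^{-j}ca^{j})=h_jc_j .
\]
Consequently $c_j=h_j^{-1}h_{j+1}$ for $0\le j\le m-2$, and $h_0=x_N$. Conversely, $h_j=x_Nc_0c_1\cdots c_{j-1}$. Together these show $H_{N,m}=\langle x_N,c_0,\dots,c_{m-2}\rangle=\langle J_m,x_N\rangle$. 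The only point needing care is getting the conjugation conventions right in this identity.

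\emph{Part (iii).} Since $F$ is free on $a,c$, the elements $c_j=a^{-j}ca^j$ for $j\in\mathbb Z$ form a free basis of the normal closure of $c$. This is the kernel of the map $F\to\mathbb Z$ sending $a\mapsto1$ and $c\mapsto0$, and the statement is the standard Reidemeister--Schreier computation for the cyclic cover of the rose. I would prove it directly by normal forms instead. A reduced word $c_{j_1}^{\epsilon_1}\cdots c_{j_r}^{\epsilon_r}$ with $c_{j_i}^{\epsilon_i}\ne c_{j_{i+1}}^{-\epsilon_{i+1}}$ expands to
\[
a^{-j_1}c^{\epsilon_1}a^{j_1-j_2}c^{\epsilon_2}\cdots c^{\epsilon_r}a^{j_r}.
\]
The only possible cancellation occurs when $j_i=j_{i+1}$. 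In that case the two adjacent $c$-letters merge to $c^{\epsilon_i+\epsilon_{i+1}}$ with $\epsilon_i+\epsilon_{i+1}=\pm2\ne0$, so the $c$-syllables never disappear and the word is nontrivial in $F$. Hence $c_0,\dots,c_{m-2}$ freely generate $J_m$, which is free of rank $m-1\ge2$ because $m\ge3$. This makes $J_m$ non-elementary.

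The remaining claim, quasiconvexity, I expect to be the only step requiring an argument beyond bookkeeping. Every finitely generated subgroup of a free group is quasiconvex, by Stallings folding or a standard result for finitely generated subgroups of free groups. Alternatively, the explicit normal form above gives it directly: geodesic words in $c_0^{\pm1},\dots,c_{m-2}^{\pm1}$ map to $(a,c)$-words that become reduced after the bounded $a$-cancellations. The resulting paths stay within distance $m$ of $J_m$, which yields quasiconvexity in $F$ with respect to the basis $(a,c)$.
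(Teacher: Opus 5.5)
Your proof is correct. Parts (i) and (ii) match the paper: its ``direct calculation'' $h_j^{-1}h_{j+1}=c_j$ is exactly your identity $h_{j+1}=h_jc_j$ obtained from $c=b^{-1}a^{-1}ba$.

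The only difference is in (iii).

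\textbf{How the paper does it.} The paper applies Schreier's method to $\ker\epsilon_F$, where $\epsilon_F\colon F\to\Z$ sends $a\mapsto1$ and $c\mapsto0$, using the transversal $\{a^j:j\in\Z\}$. This shows that $\{a^{-j}ca^j:j\in\Z\}$ is a free basis of the whole kernel. Hence $c_0,\dots,c_{m-2}$, being part of a free basis, freely generate $J_m$. Quasiconvexity is then simply quoted from the fact that finitely generated subgroups of free groups are quasiconvex.

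\textbf{How you do it.} You instead verify freeness by hand through the syllable normal form. This is self-contained. It also makes the geodesics from $1$ to elements of $J_m$ explicit, which gives a concrete quasiconvexity constant: every vertex of such a geodesic has the form $ha^{-k}$ with $h\in J_m$ and $0\le k\le m-2$.

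\textbf{Comparison.} The paper's route is shorter given the citation and identifies the full kernel. Yours avoids the citation at the cost of a little bookkeeping.

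\textbf{One phrasing to tighten.} A maximal run $j_i=j_{i+1}=\cdots=j_{i+s}$ of equal indices can have more than two terms. Reducedness forces all exponents in the run to share one sign, so the run merges to $c^{\pm(s+1)}$, not just $c^{\pm2}$. Your conclusion that no $c$-syllable disappears is unaffected.
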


 \begin{proof}
 	Since $a=t^{-1}$ and $b=a^{-N}x_N$, we have
 	$\langle x_N,t\rangle=\Gamma$, proving \textup{(i)}.
 	
 	A direct calculation gives
 	$
 	h_j^{-1}h_{j+1}=a^{-j}ca^j=c_j.
 	$
 	Hence $J_m\leq H_{N,m}$. Conversely, since $h_0=x_N$ and
 	$h_{j+1}=h_jc_j$, every $h_j$ belongs to
 	$\langle J_m,x_N\rangle$, proving \textup{(ii)}.
 	
 	Let $\epsilon_F:F=\langle a,c\rangle\to\Z$ be defined by
 	$\epsilon_F(a)=1$ and $\epsilon_F(c)=0$. Using the Schreier
 	transversal $\{a^j:j\in\Z\}$ for $\ker\epsilon_F$ in $F$,
 	Schreier's method gives
 	$
 	\ker\epsilon_F
 	=\left\langle a^{-j}ca^j:j\in\Z\right\rangle,
 	$
 	with $\{a^{-j}ca^j:j\in\Z\}$ as a free basis; see
 	\cite[Chapter~I, Proposition~3.7]{LyndonSchupp1977}. Thus
 	$c_0,\ldots,c_{m-2}$ freely generate $J_m$, so
 	$J_m\cong F_{m-1}$. Since $m\geq3$, $J_m$ is non-elementary; being a
 	finitely generated subgroup of the free group $F$, it is quasiconvex.
 \end{proof}

To show that $\langle J_m,a^Nb\rangle$ is a free product, we verify
the boundary-position hypotheses of \cref{prop:relative-pingpong}.
Define $w=ca^{-1}=b^{-1}a^{-1}b\in F$, and let
$p_\infty=a_F^+\in\partial F$ and $q_\infty=w_F^+\in\partial F$.

\begin{lemma}\label{lem:boundary-position}
The points $p_\infty,q_\infty$ lie outside $\partial J_m$.  Their stabilizers in $J_m$ are trivial, and
\[
 J_mp_\infty\cap J_mq_\infty=\varnothing.
\]
\end{lemma}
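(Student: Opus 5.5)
The plan is to work entirely inside the free group $F=\langle a,c\rangle$. I identify $\partial F$ with the space of right-infinite reduced words in $a^{\pm1},c^{\pm1}$. Under this identification $p_\infty=a^+$ is the word $aaa\cdots$ and $q_\infty=w^+$ is the word $ca^{-1}ca^{-1}\cdots$; the latter is reduced, since $w=ca^{-1}$ is cyclically reduced. I also use the homomorphism $\epsilon_F\colon F\to\Z$ from the proof of \cref{lem:corridor-algebra}, together with the full abelianization $F\to\Z^2$.

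\emph{Step 1: $p_\infty,q_\infty\notin\partial J_m$.} For a (finite or infinite) reduced word in $a,c$, define its \emph{height profile} as the sequence of partial sums of the $a$-exponents. Each generator $c_j^{\pm1}=a^{-j}c^{\pm1}a^j$ with $0\le j\le m-2$ has a profile that starts at $0$, dips to $-j$, and returns to $0$. Free reduction of a product of such words only cancels mutually inverse letters. Hence every reduced word representing an element of $J_m$ has all prefix heights in $[-(m-2),0]$. Since $J_m$ is quasiconvex (\cref{lem:corridor-algebra}(iii)), $\partial J_m$ is the set of limits of such words. It is therefore contained in the set of infinite reduced words whose prefix heights stay in $[-(m-2),0]$. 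The prefix heights of $a^\infty$ tend to $+\infty$, and those of $(ca^{-1})^\infty$ tend to $-\infty$, so neither point lies in $\partial J_m$.

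\emph{Step 2: trivial stabilizers.} In a free group, the stabilizer of a boundary point is either trivial or infinite cyclic. When it is $g^+$ for a primitive $g$, the stabilizer is exactly $\langle g\rangle$. Both $a$ and $w=ca^{-1}$ are primitive in $F$, so
\[
\operatorname{Stab}_F(p_\infty)=\langle a\rangle,\qquad \operatorname{Stab}_F(q_\infty)=\langle w\rangle.
\]
Since $J_m\le\ker\epsilon_F$ while $\epsilon_F(a^k)=k$ and $\epsilon_F(w^k)=-k$, both intersections with $J_m$ are trivial.

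\emph{Step 3: disjoint orbits.} Suppose $gp_\infty=hq_\infty$ with $g,h\in J_m$, and put $k=h^{-1}g\in J_m$, so that $kp_\infty=q_\infty$. Then $kak^{-1}$ fixes $ka^+=w^+$, and so $kak^{-1}\in\langle w\rangle$, say $kak^{-1}=w^s$. Passing to the abelianization $\Z^2$ with coordinates given by the exponents of $(a,c)$, the left side maps to $(1,0)$ and the right side to $s(-1,1)$. This is impossible, so the orbits are disjoint.

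The main obstacle is Step 1, specifically justifying that the limit set of $J_m$ is confined to bounded-height words. I would handle this through the Stallings core graph of $J_m$. This graph is the finite graph consisting of the $a$-path of length $m-2$ with a $c$-loop at each vertex. Its lifts to the Cayley tree give the convex hull of $J_m$, and every ray in that hull reads a word with bounded height. The standard fact I rely on is that the limit set of a finitely generated subgroup of a free group consists exactly of the ends of rays in this hull.
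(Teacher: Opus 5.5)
Your proof is correct, but it follows a different route from the paper's.

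\textbf{Step 1.} The paper never describes $\partial J_m$ explicitly. It invokes the Gitik--Mitra--Rips--Sageev limit-set intersection theorem, $\Lambda_F(J_m)\cap\Lambda_F(\langle a\rangle)=\Lambda_F(J_m\cap\langle a\rangle)$, and the right-hand side is empty because $\epsilon_F$ separates $J_m$ from $\langle a\rangle$ (similarly for $\langle w\rangle$). You instead note that prefix heights are the values of $\epsilon_F$ on prefixes. Deleting an adjacent inverse pair never creates a new prefix height, so reduced words of elements of $J_m$ have heights in $[-(m-2),0]$. This is elementary and self-contained. The Stallings-graph discussion you flag as the main obstacle is unnecessary. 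Convergence in $F\cup\partial F$ means agreement on ever longer prefixes, so the height bound passes to limit points directly, as your Step~1 already says.

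\textbf{Step 2.} The paper derives trivial stabilizers from Step~1: a nontrivial $u\in J_m$ fixing $p_\infty$ would force $p_\infty\in\Lambda_F(\langle u\rangle)\subseteq\Lambda_F(J_m)$. You make Step~2 independent of Step~1 by using $\operatorname{Stab}_F(g^+)=\langle g\rangle$ for $g$ not a proper power.

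\textbf{Step 3.} The paper uses a shared-axis argument in the Cayley tree to get $(hah^{-1})^r=w^s$ with $r,s>0$. It then needs that positivity to reach a contradiction, since $\epsilon_F$ alone would allow $hah^{-1}=w^{-1}$. Your use of the $c$-coordinate of the abelianization avoids tracking signs. A side remark: neither argument actually uses that the conjugator lies in $J_m$, since $\epsilon_F(kak^{-1})=1$ for every $k\in F$. So both proofs show that $p_\infty$ and $q_\infty$ lie in different $F$-orbits.

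\textbf{Comparison.} The paper's route needs only quasiconvexity plus trivial intersection with the cyclic subgroups, so it would transfer to other hyperbolic settings. Yours gives an explicit, quantitative picture of $\partial J_m$ and rests only on basic free-group facts.
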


\begin{proof}

    We identify $\partial J_m$ with its limit set
    $\Lambda_F(J_m)\subseteq\partial F$.
    The subgroups $J_m$, $\langle a\rangle$, and
    $\langle w\rangle$ are quasiconvex in $F$. By the limit-set
    intersection theorem
    \cite[Lemma~2.6]{GitikMitraRipsSageev1998},
    \[\Lambda_F(J_m)\cap\Lambda_F(\langle a\rangle)
    =\Lambda_F(J_m\cap\langle a\rangle).\]
  	Since $J_m\subseteq\ker\epsilon_F$ and $\epsilon_F(a^k)=k$, we have
  	$J_m\cap\langle a\rangle=\{1\}$. Hence
  	$\Lambda_F(J_m)\cap\Lambda_F(\langle a\rangle)=\varnothing$, and so $p_\infty=a_F^+\notin\partial J_m$.

  	Similarly, since $\epsilon_F(w^k)=-k$, we have
  	$J_m\cap\langle w\rangle=\{1\}$. Therefore
  	$\Lambda_F(J_m)\cap\Lambda_F(\langle w\rangle)=\varnothing$, and hence $q_\infty=w_F^+\notin\partial J_m$. 	
  	
  	If a nontrivial element $u\in J_m$ fixed $p_\infty$, then
  	$p_\infty\in\Lambda_F(\langle u\rangle)\subseteq\Lambda_F(J_m)$,
  	a contradiction. Thus the stabilizer of $p_\infty$ in $J_m$ is
  	trivial. The same argument applies to $q_\infty$.
  	
  	Finally, suppose that
  	$J_mp_\infty\cap J_mq_\infty\neq\varnothing$. Then there exists
  	$h\in J_m$ such that $hp_\infty=q_\infty$. Since
  	$p_\infty=a_F^+$ and $q_\infty=w_F^+$, we have $(hah^{-1})_F^+=hp_\infty=q_\infty=w_F^+$.
  	Thus $hah^{-1}$ and $w$ have the same attracting fixed point
    $q_\infty$. In the Cayley tree of $F$, their axes therefore share
    a ray. Choosing positive integers $r,s$ so that the translation
    lengths of $(hah^{-1})^r$ and $w^s$ agree, these two elements act
    identically on that ray. Since the action of $F$ on its Cayley tree
    is free, it follows that
    $
     (hah^{-1})^r=w^s.
    $
	Now $h\in J_m\subseteq\ker\epsilon_F$, so
  	$\epsilon_F(hah^{-1})=1$, while $\epsilon_F(w)=-1$. Applying
  	$\epsilon_F$ gives $r=-s$, a contradiction. Hence
  	$J_mp_\infty\cap J_mq_\infty=\varnothing$.

\end{proof}

\begin{proposition}\label{prop:free-product-corridor}
For every $m\geq3$, there exists $N_0(n,L,m)$ such that, for every $N\geq N_0(n,L,m)$,
\[
 H_{N,m}=\langle J_m,a^Nb\rangle
 \cong J_m*\langle a^Nb\rangle
 \cong F_m.
\]
\end{proposition}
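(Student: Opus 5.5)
We apply \cref{prop:relative-pingpong} with $F=\langle a,c\rangle$, $J=J_m$, $p=p_\infty=a_F^+$, $q=q_\infty=w_F^+$, and $g_N=a^Nb$. By \cref{cor:jordan-free-anosov} (for $n\neq4$) and \cref{cor:rank4-free-anosov} (for $n=4$), $F$ is a non-elementary $\cX$-Anosov subgroup with boundary map $\zeta$. By \cref{lem:corridor-algebra}(iii), $J_m$ is non-elementary and quasiconvex. \Cref{lem:boundary-position} supplies exactly the hypotheses \eqref{eq:relative-orbits}, with $p,q\notin\partial J_m$. So it remains to verify the convergence conditions \eqref{eq:relative-plus}--\eqref{eq:relative-minus} for $g_N=a^Nb$.

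Since $\zeta$ is dynamics preserving and $a$ is biproximal, $\zeta(p_\infty)=a^+$. Similarly $w=b^{-1}a^{-1}b$ has infinite order in the free group $F$, so $\zeta(q_\infty)=w^+=b^{-1}(a^{-1})^+=b^{-1}a^-$. We now use the analogue of \cref{lem:regular-ray-general} with $z$ replaced by $a$ and $u$ replaced by $b$. Its proof only uses that $z$ is biproximal, so it applies verbatim with $P=a^+$ and $Q=b^{-1}a^-$. This gives
\[
a^Nb|_{\Opp(b^{-1}a^-)}\to a^+,\qquad (a^Nb)^{-1}|_{\Opp(a^+)}\to b^{-1}a^-
\]
locally uniformly. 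This is precisely \eqref{eq:relative-plus}--\eqref{eq:relative-minus} with $\zeta(p)=a^+$ and $\zeta(q)=b^{-1}a^-$. Note that \cref{prop:relative-pingpong} does not require $\zeta(p)$ to be opposite to $\zeta(q)$ as an extra hypothesis. In any case this holds automatically, since $p_\infty\neq q_\infty$: otherwise $\{1\}\ni 1$ would give $1\cdot p_\infty=q_\infty$, contradicting $J_mp_\infty\cap J_mq_\infty=\varnothing$.

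\Cref{prop:relative-pingpong} then yields $N_0$ such that for $N\geq N_0$ we have $\langle J_m,a^Nb\rangle\cong J_m*\langle a^Nb\rangle$ with $a^Nb$ of infinite order. Combining this with \cref{lem:corridor-algebra}(ii) gives $H_{N,m}=\langle J_m,x_N\rangle\cong F_{m-1}*\Z\cong F_m$. The constant $N_0$ depends only on $n$, $L$ and $m$, since all the data $a$, $b$, $c$, $J_m$ are fixed once these are fixed.

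The main subtle point is the identification $\zeta(q_\infty)=b^{-1}a^-$. This requires checking that $w=ca^{-1}$ indeed equals $b^{-1}a^{-1}b$ in $\Gamma$, which follows from $c=b^{-1}a^{-1}ba$, and that the attracting flag of $w$ as a matrix is $b^{-1}(a^{-1})^+$, which holds because $w$ is conjugate to $a^{-1}$ by $b$. We also need that $b\notin F$ causes no issue: the convergence statement concerns only the matrix dynamics in $G$, so it does not. Everything else is a direct assembly of earlier results.
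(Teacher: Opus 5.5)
Your proposal is correct and follows essentially the same route as the paper. You apply \cref{prop:relative-pingpong} with $J=J_m$, $p=a_F^+$, $q=w_F^+$; you identify $\zeta(q_\infty)=b^{-1}a^-$ via $w=b^{-1}a^{-1}b$ and dynamics preservation; you get the convergence from \cref{lem:regular-ray-general} with $z=a$, $u=b$; and you conclude with \cref{lem:corridor-algebra}(ii)--(iii). Your extra remark that $p_\infty\neq q_\infty$ (so $\zeta(p_\infty)$ and $\zeta(q_\infty)$ are opposite) makes explicit a point the paper's ping-pong argument uses tacitly.
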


\begin{proof}
	
		Let $r_\infty=a_F^-\in\partial F$. Since the Anosov boundary map
		$\zeta:\partial F\to\cX$ is dynamics preserving,
		$\zeta(p_\infty)$ and $\zeta(r_\infty)$ are the attracting flags of
		$a$ and $a^{-1}$, respectively. Since
		$w=b^{-1}a^{-1}b$, we also have
		$\zeta(q_\infty)=b^{-1}\zeta(r_\infty)$.
		
		Apply \cref{lem:regular-ray-general} with $z=a$, $u=b$, and
		$g_N=a^Nb$. Then
		\[
		a^Nb|_{\Opp(\zeta(q_\infty))}
		\longrightarrow \zeta(p_\infty),
		\qquad
		b^{-1}a^{-N}|_{\Opp(\zeta(p_\infty))}
		\longrightarrow \zeta(q_\infty)
		\]
		locally uniformly.
		
		By \cref{lem:corridor-algebra}\textup{(iii)}, $J_m$ is
		non-elementary and quasiconvex in $F$, while
		\cref{lem:boundary-position} gives
		$p_\infty,q_\infty\notin\partial J_m$, trivial stabilizers in $J_m$,
		and
		$J_mp_\infty\cap J_mq_\infty=\varnothing$.
		Thus all the hypotheses of \cref{prop:relative-pingpong} are
		satisfied. Hence, for all sufficiently large $N$,
		$
		\langle J_m,a^Nb\rangle
		\cong J_m*\langle a^Nb\rangle,
		$
		and $a^Nb$ has infinite order. Therefore
		$\langle a^Nb\rangle\cong\Z$. Together with
		\cref{lem:corridor-algebra}\textup{(ii)--(iii)}, this gives
		$
		H_{N,m}
		\cong F_{m-1}*\Z
		\cong F_m.
		$

\end{proof}

\section{Infinite index and distinct cosets}\label{sec:cosets}
By \cref{prop:free-product-corridor}, the subgroup $H_{N,m}$ is free
of rank $m$ for all sufficiently large $N$. To apply
\cref{lem:corridor}, it remains to prove that $H_{N,m}$ has infinite
index and that the relevant cosets are pairwise distinct.

\begin{corollary}\label{cor:infinite-index-general}
For $m\geq3$ and $N\geq N_0(n,L,m)$,
$
 [\Gamma:H_{N,m}]=\infty.
$
\end{corollary}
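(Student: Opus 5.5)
The plan is to argue by contradiction using property $(T)$, as announced in the introduction. Suppose $[\Gamma:H_{N,m}]<\infty$ for some $N\geq N_0(n,L,m)$. Property $(T)$ passes to finite-index subgroups (see \cite{BHV2008}), and $\Gamma=\SL_n(\Z)$ with $n\geq3$ has property $(T)$. Hence $H_{N,m}$ would have property $(T)$.

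By \cref{prop:free-product-corridor}, however, $H_{N,m}\cong F_m$ with $m\geq3$. A nontrivial free group does not have property $(T)$. One way to see this is that $F_m$ surjects onto $\Z$. A group with property $(T)$ has finite abelianization, or equivalently every homomorphism to $\Z$ is trivial, since property $(T)$ passes to quotients and $\Z$ is an infinite amenable group without $(T)$. This contradiction gives $[\Gamma:H_{N,m}]=\infty$.

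The steps, in order, are:
\begin{enumerate}[label=\textup{(\arabic*)}]
\item invoke \cref{prop:free-product-corridor} to obtain $H_{N,m}\cong F_m$;
\item recall that finite-index subgroups of groups with property $(T)$ inherit $(T)$;
\item note that $F_m$ has infinite abelianization $\Z^m$, so it lacks $(T)$.
\end{enumerate}

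None of these steps is a serious obstacle; all the real work lies in establishing freeness in \cref{prop:free-product-corridor}. The only point to check is that $m\geq1$ makes the free group nontrivial, and this is ensured by $m\geq3$. An alternative without property $(T)$ would be the observation that finite-index subgroups of $\SL_n(\Z)$ are not free for $n\geq3$, for example because they contain $\Z^2$, such as a finite-index subgroup of a diagonal-free unipotent abelian subgroup. The $(T)$ argument is cleaner, and it is the route I would use.
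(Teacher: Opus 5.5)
Your proposal is correct and is essentially the paper's own argument: property $(T)$ for $\SL_n(\Z)$ passes to a finite-index subgroup, but $H_{N,m}\cong F_m$ surjects onto $\Z$ and $(T)$ passes to quotients, a contradiction. Your alternative via a $\Z^2$ inside finite-index subgroups would also work, but the $(T)$ route you chose is exactly the one the paper uses.
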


\begin{proof}
By \cref{prop:free-product-corridor}, $H_{N,m}\cong F_m$.  If it had finite index in $\Gamma=\SL_n(\Z)$, it would have property $(T)$, since $\SL_n(\Z)$ has property $(T)$ for $n\geq3$ and property $(T)$ passes to finite-index subgroups; see
\cite[Theorem~1.7.1 and Example~1.7.4(i)]{BHV2008}.  But $F_m$ has an infinite cyclic quotient, and property $(T)$ passes to quotients.  This is impossible.
\end{proof}

\begin{lemma}\label{lem:normal-closure-general}
For $N\geq1$, let $K_N=\langle h_j:j\in\Z\rangle$. Then $K_N=\Gamma$.
\end{lemma}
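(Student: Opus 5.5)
We need to show that the subgroup generated by all conjugates $h_j=t^jx_Nt^{-j}=a^{N-j}ba^j$, $j\in\Z$, is all of $\Gamma=\langle a,b\rangle$. The plan is to show that $K_N$ contains $b$, and then that it contains $a$.

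First, $K_N$ is normalized by $t=a^{-1}$. Indeed, $t h_j t^{-1}=h_{j+1}$ and $t^{-1}h_jt=h_{j-1}$, so $a^{\pm1}K_Na^{\mp1}=K_N$. Next, as in the proof of \cref{lem:corridor-algebra}(ii),
\[
h_j^{-1}h_{j+1}=a^{-j}b^{-1}a^{-N}\,a^{N-j-1}ba^{j+1}=a^{-j}(b^{-1}a^{-1}ba)a^{j}.
\]
Since $b^{-1}a^{-1}ba=c$, this equals $c_j=a^{-j}ca^j$. Thus $K_N$ contains every $a$-conjugate of $c$, and hence the normal closure of $c$ in $\langle a\rangle\ltimes\ldots$, i.e. the subgroup $\langle a^{-j}ca^j:j\in\Z\rangle$.

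The main step is to extract $a$ itself. Since $K_N$ is $a$-invariant, the group $\Gamma=\langle a,b\rangle$ equals $K_N\langle a\rangle$. To see this, note $b=a^{-N}h_0$, so every element of $\Gamma$ is a word in $a$ and elements of $K_N$, and $a$ normalizes $K_N$. Hence $K_N$ is a normal subgroup of $\Gamma$, and $\Gamma/K_N$ is cyclic, generated by the image of $a$. In particular $\Gamma/K_N$ is abelian.

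Now use a known algebraic fact about $\SL_n(\Z)$ for $n\geq3$: it is perfect, since it is generated by elementary matrices and each elementary matrix is a commutator $[E_{ik}(1),E_{kj}(1)]$ once $n\ge 3$. Therefore any abelian quotient of $\Gamma$ is trivial, so $\Gamma/K_N=1$ and $K_N=\Gamma$. This gives the lemma for every $N\ge1$, with no use of the dynamics.

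The only point needing care is checking that $K_N$ is normal. Conjugation by $a^{\pm1}$ preserves $K_N$ by the shift relation above, and conjugation by $b=a^{-N}h_0$ then also preserves it, since $h_0\in K_N$. If one prefers to avoid perfectness, the fallback is to show $a\in K_N$ directly via the Nielsen relations $a=z^Lp$, $b=z^Lpz^{-1}$. However, the perfectness argument is cleaner and is the one I would write.
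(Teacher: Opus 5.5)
Your proof is correct and is essentially the paper's argument: $a$ normalizes $K_N$ by the shift relation, $b=a^{-N}h_0$ then normalizes it too, so $\Gamma/K_N$ is cyclic and hence trivial because $\SL_n(\Z)$ is perfect for $n\geq3$. The detour through $h_j^{-1}h_{j+1}=c_j$ is not needed and can be dropped; its middle expression should read $a^{-j}b^{-1}a^{j-N}\,a^{N-j-1}ba^{j+1}$, not $a^{-j}b^{-1}a^{-N}\,a^{N-j-1}ba^{j+1}$.
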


\begin{proof}
The relation $a^{-1}h_ja=h_{j+1}$ shows that $a$ normalizes $K_N$.  Since $h_0=a^Nb\in K_N$ and $a^{-N}$ normalizes $K_N$, the identity $b=a^{-N}h_0$ shows that $b$ also normalizes $K_N$.  Hence
$
 K_N\triangleleft\langle a,b\rangle=\Gamma.
$
In $\Gamma/K_N$, the relation $a^Nb=1$ holds, so the quotient is cyclic.  The group $\SL_n(\Z)$ is perfect for $n\geq3$: it is generated by the elementary matrices $e_{ij}(1)$, and, for distinct $i,j,k$,
$
 e_{ij}(1)=[e_{ik}(1),e_{kj}(1)].
$
Thus $\Gamma$ has no nontrivial cyclic quotient, and $K_N=\Gamma$.
\end{proof}

\begin{proposition}\label{prop:distinct-cosets-general}
	For $m\geq3$ and $N\geq N_0(n,L,m)$, one has
	$a^r\notin H_{N,m}$ for every $1\leq r\leq m$.
	Consequently, the cosets
	$H_{N,m},H_{N,m}t,\ldots,H_{N,m}t^m$
	are pairwise distinct.
\end{proposition}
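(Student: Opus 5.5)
The plan is to reduce the coset statement to the first assertion, handle elements of $J_m$ with the homomorphism $\epsilon_F$, and handle all other elements with the ping-pong sets from \cref{prop:relative-pingpong}, using the fixed flags of $a^r$.

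\emph{Reduction.} Since $t=a^{-1}$, for $0\le i<j\le m$ we have $H_{N,m}t^i=H_{N,m}t^j$ if and only if $t^{j-i}\in H_{N,m}$, i.e.\ $a^{r}\in H_{N,m}$ with $r=j-i\in\{1,\dots,m\}$. So only the first assertion needs proof.

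\emph{Setup.} Write $g=a^Nb$. By \cref{prop:free-product-corridor}, $H_{N,m}=J_m*\langle g\rangle$, so every element has a unique reduced form alternating nontrivial $J_m$-syllables and nonzero powers of $g$. Suppose $a^r=w$ is such a reduced word. If $w\in J_m$, apply $\epsilon_F$: $J_m\subseteq\ker\epsilon_F$ by \cref{lem:corridor-algebra}, but $\epsilon_F(a^r)=r\neq0$. So $w$ contains at least one $g$-syllable.

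\emph{Ping-pong sets.} I would redo the construction in the proof of \cref{prop:relative-pingpong} with one extra requirement: $\zeta(r_\infty)\in\Int(A)$, where $r_\infty=a_F^-$. This requires $r_\infty\notin\{p_\infty,q_\infty\}$ and $r_\infty$ not accumulated by $J_mp_\infty\cup J_mq_\infty$ away from $\partial J_m$. I expect both to follow as in \cref{lem:boundary-position}, using $\epsilon_F(a)=1$ and $\epsilon_F(w)=-1$ in the Cayley tree of $F$. Then $a^r$ fixes $\zeta(r_\infty)\in\Int(A)$ and $\zeta(p_\infty)\in\Int(B_p)$.

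\emph{Cases by the end syllables of $w$.}
If $w$ begins and ends with $g$-powers, the usual ping-pong bookkeeping gives $w\zeta(r_\infty)\in\Int(B)$. Since $A\cap B=\varnothing$, this differs from $\zeta(r_\infty)=a^r\zeta(r_\infty)$.
If $w$ begins and ends with $J_m$-syllables, then $w\zeta(p_\infty)\in\Int(A)$, which differs from $\zeta(p_\infty)$.
The remaining case is a mixed word, of the form $g^{k}\cdots j$ or $j\cdots g^k$. Here ping-pong only gives $w(B)\subset\Int(B)$ or $w^{-1}(A)\subset\Int(A)$. This is compatible with the fixed points of $a^r$, so no contradiction comes for free.

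\emph{Main obstacle: the mixed case.} Such a $w$ is, up to conjugation in $H_{N,m}$, a cyclically reduced word of even length. It behaves like a loxodromic element whose attracting flag lies in $B$, just as $a^r$'s does. My first attempt would be to sharpen the ping-pong quantitatively as $N\to\infty$. The sets $B_p,B_q$ can be shrunk toward $\zeta(p_\infty),\zeta(q_\infty)$, and every mixed word then maps $B$ into a small neighbourhood of $g^{\pm1}$-images of $A$. One would then locate the attracting and repelling flags of $w$ and show they cannot both coincide with $\{a^+,a^-\}=\{\zeta(p_\infty),\zeta(r_\infty)\}$. Concretely, the repelling flag of $w$ should lie in $A$ but near $g^{-1}$-images, hence near $\zeta(q_\infty)$ or $J_m$-translates of it, and this should be incompatible with $\zeta(r_\infty)$ by the opposition and orbit-separation facts of \cref{lem:boundary-position}.

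If that fails, the fallback is to exploit that $r\le m$ is fixed while $N\ge N_0(n,L,m)$ is free. The identity $a^r=w$ would force a syllable count on $g=a^Nb$ incompatible with the exponent $r$, read off through the boundary map of the Anosov group $\langle a,b\rangle$-orbit. The mixed case is the step I expect to require the real work.
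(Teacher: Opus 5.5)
The reduction to $a^r\notin H_{N,m}$ is correct. So are the $\epsilon_F$ argument for words in $J_m$ and the two ping-pong cases whose end syllables have the same type. But the mixed case is left open, so the proposal does not prove the proposition, and the tools you point to would not close it as stated. Write the reduced word as $\mathsf w$, since the paper already uses $w=ca^{-1}$.

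Half of the mixed case is actually easy, contrary to your remark. If $\mathsf w=j_1g^{k_1}\cdots j_sg^{k_s}$, then $\mathsf wA\subset j_1B\subset\Int(A)$, so $a^{rk}A=\mathsf w^kA\subset A$ for all $k\geq1$. On the other hand, $a^{rk}$ pushes the nonempty open set $\Int(A)\cap\Opp(\zeta(r_\infty))$ to $\zeta(p_\infty)\in\Int(B)$, which is disjoint from the closed set $A$.

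For $\mathsf w=g^{k_1}j_1\cdots g^{k_s}j_s$, however, the same argument applied to $\mathsf w^{-k}=a^{-rk}$ only gives $\zeta(r_\infty)\in j_s^{-1}B$, i.e.\ $\zeta(j_sr_\infty)\in B$. Nothing in the construction of \cref{prop:relative-pingpong} rules this out. To exclude it you would have to shrink $B_p,B_q$ away from the compact set $\zeta(\overline{J_mr_\infty})$. That needs $p_\infty,q_\infty\notin\overline{J_mr_\infty}$; translates of both $\zeta(p_\infty)$ and $\zeta(q_\infty)$ occur, depending on the sign of $k_s$. This fact is not in \cref{lem:boundary-position}. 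Moreover, the $\epsilon_F$ argument you propose cannot give $q_\infty\notin J_mr_\infty$, since $\epsilon_F(w)=\epsilon_F(a^{-1})=-1$; one needs the homomorphism $F\to\Z$ with $a\mapsto0$, $c\mapsto1$ instead. Your fallback is not viable either: $\langle a,b\rangle=\SL_n(\Z)$ is not hyperbolic, so it has no Anosov boundary map.

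The idea you are missing is the short algebraic argument of the paper. It uses $r\le m$ and needs nothing beyond $[\Gamma:H_{N,m}]=\infty$.
Suppose $a^r\in H_{N,m}$ with $1\le r\le m$. Then $h_0,\dots,h_{r-1}\in H_{N,m}$, and $H_{N,m}$ is invariant under conjugation by $a^{\pm r}$. Since $a^{-r}h_sa^r=h_{s+r}$, every $h_j$ with $j\in\Z$ lies in $H_{N,m}$.
The subgroup $K_N=\langle h_j:j\in\Z\rangle$ is normalized by $a$, because $a^{-1}h_ja=h_{j+1}$, and by $b=a^{-N}h_0$. So $K_N$ is normal in $\Gamma=\langle a,b\rangle$, with cyclic quotient generated by the image of $a$.
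As $\SL_n(\Z)$ is perfect for $n\geq3$, $K_N=\Gamma$, whence $H_{N,m}=\Gamma$, contradicting \cref{cor:infinite-index-general}.

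For comparison, your dynamical route, once the extra separation above is supplied, would give the stronger conclusion $a^r\notin H_{N,m}$ for every $r\neq0$. It would cost a further shrinking of $B$, hence possibly a larger threshold than $N_0(n,L,m)$, plus more boundary bookkeeping. The algebraic argument delivers exactly what \cref{lem:corridor} needs at no extra cost.
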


\begin{proof}
		Suppose that $a^r\in H_{N,m}$ for some $1\leq r\leq m$.
		Then $h_0,\ldots,h_{r-1}\in H_{N,m}$. Since
		$a^{qr}\in H_{N,m}$ for every $q\in\Z$, conjugation by $a^{qr}$
		preserves $H_{N,m}$, and hence
		\[
		a^{-qr}h_sa^{qr}=h_{s+qr}\in H_{N,m}
		\qquad(0\leq s<r,\ q\in\Z).
		\]
		Every $j\in\Z$ can be written as $j=s+qr$ with
		$0\leq s<r$, so $h_j\in H_{N,m}$ for every $j\in\Z$.
		Thus $K_N\leq H_{N,m}$, and
		\cref{lem:normal-closure-general} gives
		$\Gamma=K_N\leq H_{N,m}$, contradicting
		\cref{cor:infinite-index-general}.
		
		Now suppose that $0\leq i<j\leq m$ and
		$H_{N,m}t^i=H_{N,m}t^j$. Then
		$
		t^{i-j}=a^{j-i}\in H_{N,m}.
		$
		Since $1\leq j-i\leq m$, this contradicts the first part.
	
\end{proof}

We have now verified all the hypotheses of \cref{lem:corridor}.

 \begin{corollary}\label{cor:final-bound}

    For every $m\geq3$, there exists $N_0=N_0(n,L,m)$ such that,
   for every $N\geq N_0$,
   $
   \kappa\bigl(\Gamma,\{a^Nb,a^{-1}\}\bigr)
   \leq\sqrt{2/m}.
   $
   Consequently,
$\kappa\bigl(\Gamma,\{a^Nb,a^{-1}\}\bigr)\to0$ as $N\to\infty$.
 \end{corollary}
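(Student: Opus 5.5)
The plan is to apply \cref{lem:corridor} directly with $x=x_N=a^Nb$, $t=a^{-1}$ and $H=H_{N,m}$. Set $N_0=N_0(n,L,m)$ to be the threshold from \cref{prop:free-product-corridor}, and take $N\geq N_0$. Then three things must be checked.

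\emph{Generation.} By \cref{lem:corridor-algebra}(i), $\Gamma=\langle x_N,t\rangle$.

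\emph{Form of $H$.} By definition, $H_{N,m}=\langle t^jx_Nt^{-j}:0\le j<m\rangle$, which is exactly the subgroup $H$ appearing in \cref{lem:corridor}.

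\emph{Infinite index.} This is \cref{cor:infinite-index-general}.

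\emph{Distinct cosets.} The cosets $H_{N,m},H_{N,m}t,\ldots,H_{N,m}t^m$ are pairwise distinct by \cref{prop:distinct-cosets-general}.

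With these hypotheses in place, \cref{lem:corridor} gives $\kappa(\Gamma,\{a^Nb,a^{-1}\})\le\sqrt{2/m}$ for every $N\geq N_0$. Note that in \cref{lem:corridor} the set is written as $\{x,t\}$, and here it is $\{x_N,t\}=\{a^Nb,a^{-1}\}$, so the two statements agree.

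The ``consequently'' clause needs more care, and this is the only real subtlety. The threshold $N_0$ depends on $m$, while the pair $\{a^Nb,a^{-1}\}$ does not involve $m$ at all. So for a fixed $N$, the inequality holds for every $m\geq3$ with $N_0(n,L,m)\le N$. Given $\varepsilon>0$, choose $m\geq3$ with $\sqrt{2/m}<\varepsilon$. Then every $N\geq N_0(n,L,m)$ gives $\kappa(\Gamma,\{a^Nb,a^{-1}\})<\varepsilon$. This is exactly the statement $\limsup_{N\to\infty}\kappa\le\varepsilon$ for every $\varepsilon>0$, and since $\kappa\ge0$ the limit is $0$.

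No step should be a genuine obstacle, because all the substantive work (freeness, infinite index, distinct cosets) is already contained in the cited results. The one thing to watch is the order of quantifiers: fix $m$ first, then take $N$ large. Doing so makes the convergence statement follow.
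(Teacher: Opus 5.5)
Your proposal is correct and matches the paper's proof: you verify the hypotheses of \cref{lem:corridor} for $x=a^Nb$ and $t=a^{-1}$ using \cref{lem:corridor-algebra}, \cref{cor:infinite-index-general} and \cref{prop:distinct-cosets-general}, with the threshold $N_0$ taken from \cref{prop:free-product-corridor}. You also state the quantifier order in the limit clause explicitly (fix $m$ first, then take $N\geq N_0(n,L,m)$), which the paper leaves implicit.
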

 
 \begin{proof}
 	This follows from
 	\cref{lem:corridor,prop:free-product-corridor,cor:infinite-index-general,prop:distinct-cosets-general}.
 \end{proof}

\section{Proof of the main theorem}

\begin{proof}[Proof of \cref{thm:main}]
Fix $n\geq3$.  Choose the admissible generating pair from \cref{sec:jordan-family} when $n\neq4$, and from \cref{sec:rank-four} when $n=4$.  Choose $L$ large enough that $F=\langle a_L,c\rangle$ is Anosov and free.  For each $m\geq3$, choose $N\geq N_0(n,L,m)$.  By \cref{lem:corridor-algebra}\textup{(i)},
$
 S_{N,m}=\{a_L^Nb_L,a_L^{-1}\}
$
is a two-element generating set of $\SL_n(\Z)$.  By \cref{cor:final-bound},
$
 \kappa\bigl(\SL_n(\Z),S_{N,m}\bigr)
 \leq\sqrt{2/m}.
$
Since $m$ is arbitrary,
$
 0\leq\kappa_{\leq2}\bigl(\SL_n(\Z)\bigr)
 \leq\inf_{m\geq3}\sqrt{2/m}=0.
$
This proves the theorem.
\end{proof}

\appendix

\section{The alternating sine sums}\label{app:sine-sums}

We prove the identities used in \eqref{eq:alternating-sums}.  Put $\theta=\pi/(2n+1)$ and
\[
 A_j=(-1)^{j-1}\sin(2j\theta),
 \qquad
 r=-e^{2i\theta}.
\]
Then $A_j=-\operatorname{Im}(r^j)$.  The finite geometric sum gives
\[
 \sum_{j=1}^n(-1)^{j-1}\sin(2j\theta)
 =\frac{\sin\theta-(-1)^n\sin((2n+1)\theta)}{2\cos\theta}
 =\frac{\sin\theta}{2\cos\theta}.
\]
For the weighted sum, use
\[
 \sum_{j=1}^n(n+1-j)r^j
 =\frac{r\bigl(n-(n+1)r+r^{n+1}\bigr)}{(1-r)^2}.
\]
Since $1-r=2\cos\theta\,e^{i\theta}$, one has
$r/(1-r)^2=-1/(4\cos^2\theta)$.  Moreover,
$2(n+1)\theta=\pi+\theta$ and, with $\eps=(-1)^{n-1}$,
\[
 \operatorname{Im}\bigl(-(n+1)r+r^{n+1}\bigr)
 =2(n+1)\sin\theta\cos\theta-\eps\sin\theta.
\]
Taking minus the imaginary part therefore yields
\[
 \sum_{j=1}^n(n+1-j)A_j
 =\frac{\sin\theta\bigl(2(n+1)\cos\theta-\eps\bigr)}
        {4\cos^2\theta},
\]
as claimed.

\section{The rank-four Pl\"ucker calculation}
\label{app:rank-four-algebra}

  We verify the values of $\Phi_2$ and $\Phi_3$ in
  \eqref{eq:Phi-initial} for a convenient normalization of the
  Pl\"ucker representative $\mathbf p$. Put
$\xi=\rho+\sigma$, and $\eta=\rho\sigma$.
Comparison of coefficients in \eqref{eq:z4-charpoly} gives
$\xi=11\eta/(\eta+1)$.

For a root $t$ of $\chi_z$, set
\[
v(t)=
\begin{pmatrix}
	-5t^2-4t\\
	-t^3+t^2\\
	t^2-t\\
	-5t^2-5t+1
\end{pmatrix}.
\]
A direct calculation gives
\[
(z-tI)v(t)=
\begin{pmatrix}
	0\\
	\chi_z(t)\\
	0\\
	0
\end{pmatrix},
\]
so $v(\rho)$ and $v(\sigma)$ are eigenvectors corresponding to
$\rho$ and $\sigma$, respectively. We normalize $\mathbf p$ by
\[
\mathbf p
=-\frac{v(\rho)\wedge v(\sigma)}
{9\eta(\rho-\sigma)}.
\]
In the ordered Pl\"ucker basis
$
e_{12},e_{13},e_{14},e_{23},e_{24},e_{34},
$
this gives
\[
\mathbf p=
\begin{pmatrix}
	\dfrac{5\eta+4\xi-4}{9}\\[1mm]
	-1\\[1mm]
	\dfrac{-5\eta+5\xi+4}{9\eta}\\[1mm]
	-\dfrac{\xi-\eta-1}{9}\\[1mm]
	\dfrac{\xi^2-5\xi\eta-\xi-5\eta^2+4\eta}{9\eta}\\[1mm]
	-\dfrac{\xi-10\eta-1}{9\eta}
\end{pmatrix}.
\]

In the same basis,
\[
D=\wedge^2d=
\begin{pmatrix}
	1&-3&-1&1&1&-2\\
	0&-1&0&1&0&0\\
	-1&0&0&2&0&0\\
	0&-2&0&1&0&-1\\
	-3&3&1&0&-2&2\\
	0&1&0&-2&0&0
\end{pmatrix},
\]
while the matrix of $\mathcal B$ is
\[
J=
\begin{pmatrix}
	0&0&0&0&0&1\\
	0&0&0&0&-1&0\\
	0&0&0&1&0&0\\
	0&0&1&0&0&0\\
	0&-1&0&0&0&0\\
	1&0&0&0&0&0
\end{pmatrix}.
\]
Thus $\mathcal B(x,y)=x^{\transp}Jy$ in these coordinates, and hence
$
\Phi_j=\mathbf p^{\transp}JD^j\mathbf p.
$

Using $\xi=11\eta/(\eta+1)$ and, by \eqref{eq:eta-sqrt5},
$
\eta^2+\frac{27+3\sqrt5}{2}\eta+1=0,
$
direct simplification gives
\[
\Phi_2
=-\frac{(29+12\sqrt5)\eta+744+408\sqrt5}{99}
=-\mathfrak a
\]
and
\[
\Phi_3
=-\frac{(15\sqrt5-16)\eta-280+312\sqrt5}{99}
=-\mathfrak b.
\]
This proves \eqref{eq:Phi-initial}.

\section*{Acknowledgments}
GPT-5.6 assisted in the development of the proofs through an iterative
dialogue with the author. The author takes full responsibility for all
mathematical arguments and for the content of the paper. The author 
thanks Alexander Lubotzky for helpful discussions and valuable suggestions.

\end{document}